\newcommand\myiso{\stackrel{\mathclap{\normalfont\mbox{\small $p$}}}{-}}
\newcommand\myisot{\stackrel{\mathclap{\normalfont\mbox{\small $3$}}}{-}}
\definecolor{mylinkcolor}{rgb}{0.8,0,0}
\definecolor{myurlcolor}{rgb}{0,0,0.8}
\definecolor{mycitecolor}{rgb}{0,0,0.8}
\newtheorem{defn}{Definition}[section]
\newtheorem{lemma}[defn]{Lemma}
\newtheorem{thm}[defn]{Theorem}
\newtheorem{cor}[defn]{Corollary}
\theoremstyle{definition}
\newtheorem*{ack}{Acknowledgements}
\newtheorem{remark}[defn]{Remark}
\newtheorem{example}[defn]{Example}
\newcommand{\Z}{\mathbb Z}
\newcommand{\Q}{\mathbb{Q}}
\newcommand{\FF}{\mathbb F}
\newcommand{\PP}{\mathbb P}
\newcommand{\Rats}{\Q}
\newcommand{\arrow}{\longrightarrow}
\newcommand{\Gal}{\operatorname{Gal}}
\newcommand{\GQ}{\Gal(\overline{\Rats}/\Rats)}
\newcommand{\GL}{\operatorname{GL}}
\newcommand{\PGL}{\operatorname{PGL}}
\newcommand{\E}{\mathcal{E}}
\newcommand{\tor}{\mathrm{tors}}
\begin{document}
	
	% Title, authors and addresses
	
	% use the thanksref command within \title, \author or \address for footnotes;
	% use the corauthref command within \author for corresponding author footnotes;
	% use the ead command for the email address,
	% and the form \ead[url] for the home page:
	% \title{Title\thanksref{label1}}
	% \thanks[label1]{}
	% \author{Name\corauthref{cor1}\thanksref{label2}}
	% \ead{email address}
	% \ead[url]{home page}
	% \thanks[label2]{}
	% \corauth[cor1]{}
	% \address{Address\thanksref{label3}}
	% \thanks[label3]{}
	
	\title[Isogeny-Torsion Graphs]{A classification of isogeny-torsion graphs of $\Q$-isogeny classes of elliptic curves}
	
	\author{Garen Chiloyan}
	\address{Department of Mathematics, University of Connecticut, Storrs, CT 06269, USA}
	\email{garen.chiloyan@uconn.edu}
	
	\author{\'Alvaro Lozano-Robledo}
	\address{Department of Mathematics, University of Connecticut, Storrs, CT 06269, USA}
	\email{alvaro.lozano-robledo@uconn.edu}
	\urladdr{\url{http://alozano.clas.uconn.edu/}}

	\subjclass{Primary: 11G05, Secondary: 14H52.}
	
	\begin{abstract}
		Let $\mathcal{E}$ be a $\Q$-isogeny class of elliptic curves defined over $\Q$. The isogeny graph associated to $\mathcal{E}$ is a graph which has a vertex for each elliptic curve in the $\Q$-isogeny class $\mathcal{E}$, and an edge for each cyclic $\Q$-isogeny of prime degree between elliptic curves in the isogeny class, with the degree recorded as a label of the edge. In this paper, we define an isogeny-torsion graph to be an isogeny graph where, in addition, we label each vertex with the abstract group structure of the torsion subgroup over $\Q$ of the corresponding elliptic curve. Then, the main result of the article is  a classification of all the possible isogeny-torsion graphs that occur for $\Q$-isogeny classes of elliptic curves defined over the rationals.  
	\end{abstract}
	
	\maketitle
	
	\section{Introduction}
	Let $E$ be an elliptic curve defined over $\Q$. The Mordell--Weil theorem shows that $E(\Q)$ is a finitely generated abelian group, and therefore $E(\Q)_\text{tors}$ is a finite abelian group. By Mazur's theorem, there are precisely $15$ different isomorphism types of torsion groups that occur over $\Q$ (which we recall in Theorem \ref{thm-mazur} below). Now, suppose that $E$ admits an isogeny $\phi\colon E\to E'$ defined over $\Q$, where $E'/\Q$ is another elliptic curve defined over $\Q$. In this article, we are interested in the possible pairs of torsion subgroups that can occur as $(E(\Q)_\text{tors},E'(\Q)_\text{tors})$.
	
	More generally, let $\mathcal{E}$ be a $\Q$-isogeny class of elliptic curves defined over $\Q$. The isogeny graph associated to $\mathcal{E}$ is a graph which has a vertex for each elliptic curve in the $\Q$-isogeny class $\mathcal{E}$, and an edge for each cyclic $\Q$-isogeny of prime degree between elliptic curves in the isogeny class, with the degree recorded as a label of the edge. In this article, we define an isogeny-torsion graph to be an isogeny graph where, in addition, we label each vertex with the abstract group structure of the torsion subgroup over $\Q$ of the corresponding elliptic curve $E$ (that is, the vertex associated to $E$ is labeled with the abstract group structure of $E(\Q)_{\text{tors}}$). Thus, the main goal of this article is to classify all the possible isogeny-torsion graphs that occur for $\Q$-isogeny classes of elliptic curves defined over $\Q$. For the remainder of the article, for each positive integer $n$, the term $n$-isogeny is strictly reserved for a $\Q$-isogeny whose kernel is cyclic of order $n$.
	
			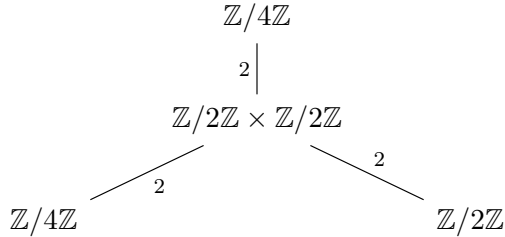
\begin{figure}[h!] 
		\begin{tikzcd}
			& \Z/4\Z                             &       \\
			& \Z/2\Z\times \Z/2\Z \arrow["2", u, no head] \arrow["2", ld, no head] \arrow["2", rd, no head] &       \\
			\Z/4\Z &                                   & \Z/2\Z
		\end{tikzcd}
		\caption{The isogeny-torsion graph for the isogeny class \texttt{17.a}.}
		\label{fig-17.a2}
	\end{figure} 
	
	\begin{example}
		Let $E/\Q$ be the elliptic curve $y^2+xy+y=x^3-x^2-6x-4$ with LMFDB \cite{lmfdb} label \texttt{17.a2}. The curve $E$ admits three distinct $2$-isogenies, and no other $\Q$-isogenies of prime degree. The isogeny class \texttt{17.a} consists of four elliptic curves. The torsion subgroup of $E$ is isomorphic to $\Z/2\Z\times \Z/2\Z$, while those curves at the corners of the isogeny graph have torsion subgroups $\Z/2\Z$, $\Z/4\Z$, and $\Z/4\Z$, respectively. Thus, the isogeny-torsion graph associated to the isogeny class \texttt{17.a} is as it appears in Figure \ref{fig-17.a2}. 

We will abbreviate $\Z/a\Z$ and $\Z/a\Z\times \Z/b\Z$ by $[a]$ and $[a,b]$, respectively, and we will denote an isogeny-graph as the one in Figure \ref{fig-17.a2} by $([2,2],[4],[4],[2])$ and say it is of $T_4$ type.	
	\end{example}

Our results are as follows.

\begin{thm}\label{thm-mainisogenygraphs}
	There are $26$ isomorphism types of isogeny graphs that are associated to elliptic curves defined over $\Q$. More precisely, there are $16$ types of (linear) $L_k$ graphs, $3$ types of (non-linear two-primary torsion) $T_k$ graphs, $6$ types of (rectangular) $R_k$ graphs, and $1$ type of (special) $S$ graph. The possible configurations and labels of isogeny graphs are listed in the first two columns of Tables \ref{L_k graphs} through \ref{S_k graphs} (see Section \ref{sec-tables}). Moreover, there are $11$ isomorphism types of isogeny graphs that are associated to elliptic curves over $\Q$ with complex multiplication, namely the types $L_2(p)$ for $p=2,3,11,19,43,67$, or $163$, and types $L_4$, $T_4$, $R_4(6)$, and $R_4(14)$ (and examples are given in Table \ref{tab-CMgraphs}). Finally, the type $L_4$ occurs exclusively for elliptic curves with CM.
	\end{thm}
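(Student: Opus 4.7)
The plan is to combine the Mazur--Kenku classification of cyclic $\Q$-rational isogenies with a finite case analysis on graph shapes. First, I would invoke Mazur--Kenku: the cyclic $\Q$-isogeny degrees of elliptic curves over $\Q$ lie in the finite set $\{1,\ldots,10,12,\ldots,19,21,25,27,37,43,67,163\}$; in particular, the prime degrees are in $\{2,3,5,7,11,13,17,19,37,43,67,163\}$, and every isogeny class contains at most eight curves. A crucial consequence I would use throughout is that for $E/\Q$ the number of $\Q$-rational subgroups of $E[2]$ is $0$, $1$, or $3$ (never $2$), since the Galois image in $\GL_2(\FF_2) \cong S_3$ can fix either none, one, or all three order-two subgroups; by contrast, for odd $p$ an elliptic curve may have exactly two rational $p$-subgroups (the Borel-intersection case).

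Second, I would combinatorially enumerate all connected graphs on at most eight vertices whose edges are colored by the allowed primes, subject to: (i) the $0/1/3$ dichotomy at $p=2$ and the analogous constraints at odd primes; (ii) the compatibility condition that compositions of edges yield cyclic $\Q$-isogenies whose degrees appear in the Kenku list; and (iii) the alternative-factorization constraint that whenever a cyclic $pq$-isogeny admits two distinct factorizations (for distinct primes $p, q$), both intermediate curves must appear as vertices. The resulting possibilities partition into the four families $L_k$, $T_k$, $R_k$, $S$; realization of each listed type is then verified by exhibiting an explicit LMFDB example, as tabulated in Section \ref{sec-tables}.

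For the CM classification, I would use that elliptic curves over $\Q$ with complex multiplication correspond to exactly $13$ CM $j$-invariants; for each such $j$, the isogeny class is determined by the action of the endomorphism ring on the torsion (or, equivalently, read off from the LMFDB), and exactly the $11$ listed types appear. Finally, to prove the exclusivity of $L_4$: the $0/1/3$ dichotomy rules out $2$-edges along an $L_4$ path (two $2$-edges meeting at an interior vertex would force a third), and the alternative-factorization constraint (iii) rules out mixed-prime labels on a linear shape of length three. Hence $L_4$ must have all three edges labeled with the same odd prime $p$, and therefore corresponds to a cyclic $\Q$-rational $p^3$-isogeny; by Kenku's list the only admissible $p^3$ is $27$, forcing $p = 3$. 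Since the modular curve $X_0(27)$ has rank zero and its non-cuspidal rational points correspond only to CM elliptic curves (with $j = -12288000$), every $L_4$-class is CM, as claimed.

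The main obstacle is the combinatorial enumeration in step two — especially verifying that each listed shape is actually realized and that no additional shapes are missed — which requires careful bookkeeping, but no conceptually new input beyond the Mazur--Kenku classification and the vertex-degree constraints described above.
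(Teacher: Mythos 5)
Your overall strategy — Kenku's bound of eight curves per class, the classification of cyclic $\Q$-rational isogeny degrees via $X_0(N)(\Q)$, and a finite case analysis — is the same as the paper's, which organizes the case analysis by whether there is a single maximal cyclic $\Q$-rational subgroup rather than by graph shape; your $L_4$ argument and your CM discussion also match the paper's. However, the constraints (i)--(iii) you impose on the enumeration are not sufficient to make it close, and the missing ingredients are genuine lemmas rather than bookkeeping. First, your $0/1/3$ dichotomy lives only at the level of $E[2]$ and does not control rational cyclic subgroups of order $4$, $8$, or $16$. Nothing in (i)--(iii) excludes, say, a curve with full rational $2$-torsion and exactly \emph{one} additional rational cyclic $4$-subgroup (a five-vertex ``$T_5$''), since a cyclic $4$-isogeny is perfectly admissible in Kenku's list. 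The paper's Lemma \ref{lem-evenness-of-C(E)-new} supplies the needed fact: if $\langle P\rangle$ is $\Q$-rational of order $2^n$, then the two cyclic groups of order $2^{n+1}$ containing $P$ are either both $\Q$-rational or both not (proved using the Galois action on $E[2]$ and a basis computation). This pairing is what forces $C_2(E)$ to be $1$ or even and pins down the exact shapes $T_6$ and $T_8$; without it your enumeration produces spurious candidates that would have to be killed by other means.

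Second, for odd $p$ you assert ``analogous constraints'' and note that exactly two independent rational $p$-subgroups can occur, but you never rule out three or more. Kenku's theorem as quoted allows $C_3(E)=C(E)=4$, which is consistent a priori with a star of three independent $3$-isogenies (a ``$T_4$ with $3$-labels'') rather than the linear $L_4$; distinguishing these is exactly the content of the paper's Lemma \ref{lem-27}. The argument is short but necessary: three Galois-stable lines in $E[3]$ force $\rho_{E,3}$ to have scalar image, whence $\det\rho_{E,3}$ lands in the squares of $\FF_3^\times$, contradicting surjectivity of the mod-$3$ cyclotomic character. (A companion determinant-style argument is also what the paper uses, via Lemma \ref{p^2 isogeny}, to see that two independent $p$-isogenies for $p=3,5$ promote to a cyclic $p^2$-isogeny on a neighboring curve, which is how the $L_3(9)$, $L_3(25)$, and $R_6(18)$ shapes are identified.) With these two lemmas added to your constraint list, your enumeration does reduce to the paper's case analysis and the rest of your outline goes through.
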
 

While the results of Theorem \ref{thm-mainisogenygraphs} are essentially already known, we could not find a suitable reference, so we offer a full proof here for the sake of completeness (in Section \ref{sec-whatgraphs}). The classification of isogeny graphs is necessary for our main theorem, which details a complete classification of isogeny-torsion graphs over $\Q$.

\begin{thm}\label{thm-main2} There are $52$ isomorphism types of isogeny-torsion graphs that are associated to elliptic curves defined over $\Q$. In particular, there are $23$ types of $L_k$ graphs, $13$ types of $T_k$ graphs, $12$ types of $R_k$ graphs, and $4$ types of $S$ graphs. The possible configurations of isogeny-torsion graphs are listed in the third column of Tables \ref{L_k graphs} through \ref{S_k graphs}. Moreover, there are $16$ isomorphism types of isogeny-torsion graphs that are associated to elliptic curves over $\Q$ with complex multiplication (and examples are given in Table \ref{tab-CMgraphs}).
	\end{thm}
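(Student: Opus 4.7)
The plan is to build directly on Theorem \ref{thm-mainisogenygraphs}: since the isogeny graph is already determined to be one of the $26$ shapes $L_k$, $T_k$, $R_k$, or $S$, it suffices to enumerate, for each shape, the admissible labelings of the vertices by one of Mazur's $15$ torsion groups, and then verify realizability. Thus the structure of the proof is a systematic case analysis indexed by the graph type (the rows of Tables \ref{L_k graphs} through \ref{S_k graphs}), where for each type we (i) bound the possible torsion labels, and then (ii) exhibit a curve (typically an LMFDB example) realizing each surviving configuration.

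The main tool for bounding labels is the interaction between torsion and isogeny: if $\phi\colon E\to E'$ is a cyclic $\Q$-rational $p$-isogeny, then one has a left-exact sequence $0\to \ker(\phi)(\Q)\to E(\Q)_\tor\to E'(\Q)_\tor$, and the dual $p$-isogeny gives the reverse inequality, so the $p$-primary parts of $E(\Q)_\tor$ and $E'(\Q)_\tor$ are tightly coupled; in particular the order of $E(\Q)_\tor$ can change by a factor of at most $p$ across a $p$-isogeny. This observation, combined with Mazur's theorem and the fact (already used in \ref{thm-mainisogenygraphs}) that $E$ admits a rational $p$-isogeny with $\ker(\phi)(\Q)=\ker(\phi)$ if and only if $E(\Q)$ contains a point of order $p$, forces most labelings: for instance, in an $L_k$ graph the torsion orders along the path can only change monotonically by prescribed factors, and in a $T_k$ or $R_k$ graph the three outer vertices all receive torsion groups determined (up to a few choices) by the torsion of the central vertex.

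For each graph type we then perform a prime-by-prime analysis. For primes $p\in\{11,17,19,37,43,67,163\}$ the isogeny automatically forces $p\nmid |E(\Q)_\tor|$ on at least one side of a $p$-isogeny (since these do not appear in Mazur's list as point orders in the relevant configurations), so the torsion is determined. For $p=2,3,5,7,13$ one uses the classification of rational points on the modular curves $X_0(N)$ and $X_1(N)$ to determine which combinations of local torsion and isogeny can coexist; this is where the bulk of the casework lies. In the non-linear graphs $T_k$, $R_k$ and $S$ the simultaneous existence of several $2$-isogenies (or a $2$- and $3$-isogeny in $R_k$, or the $\{2,3\}$ structure in $S$) requires one to verify compatibility of the torsion labels at all vertices at once, which is the main technical obstacle.

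Finally, realizability is established by producing an explicit curve for each of the $52$ labeled configurations, listed in the third column of the Tables, and for the $16$ CM-configurations recorded in Table \ref{tab-CMgraphs}. The uniqueness part (that no further labelings occur) follows from the bounding arguments above; the existence part reduces to checking one LMFDB example per class. The hardest part of the argument is organizing the simultaneous local analysis so that no case is missed in the highly-connected graphs ($T_8$, $R_4(21)$, and $S$), and this is handled by treating the central vertex first and then propagating along each edge, using that an edge of prime degree $p$ multiplies or divides the $p$-part of the torsion by $p$ and leaves all other primary parts unchanged.
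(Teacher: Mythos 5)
Your overall skeleton matches the paper's strategy: fix the isogeny graph type from Theorem \ref{thm-mainisogenygraphs}, bound the admissible torsion labels vertex by vertex using Mazur's theorem, Kenku's bounds, and propagation of torsion along isogenies, then realize each surviving configuration with an explicit curve. The propagation principle you state (prime-to-$p$ parts preserved, $p$-part changing by a factor of at most $p$ across a $p$-isogeny) is correct and is essentially what the paper's Lemmas in Section \ref{sec-lemmas} encode. However, there is a genuine gap: this local, edge-by-edge analysis is not strong enough to eliminate several configurations that satisfy every one of your constraints but nonetheless do not occur over $\Q$. The clearest example is $([2,2],[4],[4],[4])$ for $T_4$: every vertex carries a group from Mazur's list, every edge respects the factor-of-$2$ rule, and each vertex has the right isogenies, yet the configuration is impossible. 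Ruling it out requires showing that the simultaneous rationality conditions force the image of $\rho_{E,4}$ into a specific order-$2$ subgroup of $\GL(2,\Z/4\Z)$ with non-surjective behavior, which must then be excluded either by the Rouse--Zureick-Brown classification of $2$-adic images or by observing that the corresponding quotient of $X(4)$ is a pointless conic. The same phenomenon recurs, with mod-$8$ and mod-$16$ image computations checked against the Rouse--Zureick-Brown database, in the $T_6$ and $T_8$ cases (e.g., to show at most one vertex of a $T_6$ graph with bicyclic torsion $\Z/2\Z\times\Z/4\Z$ can carry $\Z/8\Z$).

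The most serious omission concerns the $S$-type graphs. Your framework would permit the configurations $([2,2],[2,2],[4],[4],[4],[4],[2],[2])$ and $([2,6],[2,2],[12],[4],[12],[4],[6],[2])$, since the $T_4$ argument only forbids \emph{three} pairs of $\Z/4\Z$ (or $\Z/12\Z$) corners, not two. Excluding these two graphs is not a matter of $X_0(N)$ or $X_1(N)$ data: the paper must intersect the genus-zero family $X_{24e}$ (parametrizing the relevant mod-$4$ image) with $X_0(3)$, producing a genus-$13$ curve whose rational points are found by passing to a genus-$6$ quotient, then a genus-$2$ hyperelliptic quotient with rank-zero Jacobian, and applying Chabauty; all rational points turn out to be cusps. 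Without anticipating arguments of this kind --- explicit Galois-image computations against the $2$-adic database, plus a nontrivial rational-points determination on a fiber product of modular curves --- your case analysis would terminate with strictly more than $52$ candidate graphs and no means to discard the spurious ones.
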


The degrees of rational isogenies of elliptic curves defined over $\Q$ have been described completely in the literature. One of the most important milestones in the classification was \cite{mazur1}, where Mazur dealt with the case of rational isogenies of elliptic curves defined over $\Q$ of prime degree. The complete classification of the degrees of rational isogenies of elliptic curves defined over $\Q$, for prime or composite degree, was completed due to work of Fricke, Kenku, Klein, Kubert, Ligozat, Mazur  and Ogg, among others (see Theorem \ref{thm-ratnoncusps} below, and \cite{lozano0}, Section 9). In particular, the work \cite{kenku} of Kenku shows that there are at most $8$ elliptic curves in each isogeny class over $\Q$ (see Theorem \ref{thm-kenku} below). Theorem \ref{thm-mainisogenygraphs} follows directly from the classification of rational isogenies over $\Q$.

  In Section \ref{sec-tables}, we include tables with every type of isogeny and isogeny-torsion graphs. The classification of isogeny-torsion graphs begins in  Section \ref{sec-CM}, for elliptic curves with complex multiplication (in particular, we show that only $16$ isogeny-torsion graphs are possible for elliptic curves with CM; see Table \ref{tab-CMgraphs}).  After some preliminary results in Sections \ref{sec-preliminaries} and \ref{sec-lemmas}, and building on Kenku's work \cite{kenku}, we shall show in Section \ref{sec-whatgraphs} that there are $26$ types of isogeny graphs (including the trivial graph with one vertex), which we denote linear ($L_1$, $L_2(p)$ and $L_3(q^2)$ for various primes $p=2,3,5,7,11,13,17,19,37,43,67$, or $163$, and $q=3$, or $5$, and $L_4=L_4(27)$), rectangular ($R_4(p\cdot q)$ for $pq=6,10,14,15,21$, and $R_6=R_6(18)$), $T_{k}$-graphs ($T_4$, $T_6$, and $T_8$) and the special graph of type $S$. We remark here that the degree of the maximal cyclic $\Q$-isogeny in the graph is added to the notation inside the parentheses when necessary to distinguish types, and omitted otherwise. After that, we treat linear and rectangular graphs in Sections \ref{sec-linear} and \ref{sec-rectangular}, respectively. In Section \ref{sec-T4} we classify graphs of type $T_4$ and in Section \ref{sec-morelemmas} we show a few more intermediary results. The graphs of type $T_6$ and $T_8$ are classified in Sections \ref{sec-T6graphs} and \ref{sec-T8graphs}. Finally, the special graphs of type $S$ are classified in Section \ref{sec-Sgraphs}. 
 
 We would like to stress that the question of classifying isogeny graphs and isogeny-torsion graphs depends only on the $\Q$-isogeny class and not on an individual element of the $\Q$-isogeny class. During our discussions and proofs we will explain which elliptic curve in the $\Q$-isogeny class is the easiest to focus on to generate or identify the isogeny-torsion graph but it is not required to start with said curve. In fact, generating the isogeny-torsion graph using any of the elliptic curves in the $\Q$-isogeny class will give the same isogeny-torsion graph.
 
 \begin{remark}
 	It is worth pointing out that some isogeny-torsion graphs appear for infinitely many isomorphism classes of elliptic curves over $\Q$ (for example, the $R_4$ graphs of type $([10],[10],[2],[2])$) while other isogeny-torsion graphs only occur for finitely many $j$-invariants (for example, the $L_4$ graph of type $([3],[3],[3],[1])$).  In addition, the $L_4$ graphs only occur in the case of elliptic curves with complex multiplication (as they require a rational cyclic $27$-isogeny). The classification of what isogeny-torsion graphs appear for infinitely many different $j$-invariants will be the subject of a follow up paper by the first author.
 \end{remark}
 
 \begin{ack}
 	The authors would like to express their gratitude to Harris Daniels, who helped us find the rational points on the modular curve of genus $13$ that appears in Section \ref{sec-elusive} (using a method and Magma code similar to that outlined in \cite{daniels}). The authors would also like to thank Enrique Gonz\'alez-Jim\'enez, Filip Najman, Drew Sutherland, and John Voight for helpful comments and suggestions on an earlier draft of this paper. Finally, we would like to thank the referees for a thorough reading and many helpful comments and suggestions.
 \end{ack}
 
 \section{Tables}\label{sec-tables} In this section we include tables of all the possible isogeny-torsion graphs that occur for $\Q$-isogeny classes of elliptic curves over $\Q$. In each table we include an example of a concrete isogeny class that realizes said isogeny-torsion graph. The possible $L_k$ isogeny graphs appear in Table \ref{L_k graphs} below, while the $T_k$, $R_k$, and $S$ graphs appear in Tables  \ref{T_k graphs},  \ref{R_k graphs}, and \ref{S_k graphs}, respectively. Our notation for an isogeny-torsion graph is $(G_1,\ldots,G_n)$, where $n$ is the number of vertices, with $G_i=[a]$ or $[a,b]$ which represent $\Z/a\Z$ or $\Z/a\Z\times \Z/b\Z$, respectively, and $G_i\cong E_i(\Q)_{\text{tors}}$. The numbering of the curves $E_i$ follows the numbering of the vertices in the isogeny graphs that appear in the first column of the tables. We remark here that graphs are considered up to graph isomorphism so, for example, $([2,2],[4],[2],[2])$ and $([2,2],[2],[4],[2])$ represent the same type.

 The rest of the article is devoted to prove that these tables are complete.
 
 \section{Graphs in the CM case}\label{sec-CM}
 
 In this section we quickly determine the possible isogeny-torsion graphs attached to elliptic curves with complex multiplication. Let $E/\Q$ be an elliptic curve with CM by an imaginary quadratic field $K$ of discriminant $d_K$. In \cite{enriquealvaro}, Table 1 of Section 7, one can find the degrees of the isogenies for each CM $j$-invariant over $\Q$, therefore determining the type of isogeny graph. In addition, by \cite{olson} (see also \cite{clarkcooketal}), the torsion subgroup of an elliptic curve over $\Q$ with CM is isomorphic to one of the following groups: $0$, $\Z/2\Z$, $\Z/3\Z$, $\Z/4\Z$, $\Z/6\Z$, or $\Z/2\Z\oplus \Z/2\Z$.  In particular, it follows that the possible types of isogeny graphs for CM elliptic curves are $L_2(p)$ for $p=2,3,11,19,43,67,163$, $L_4$, $T_4$, $R_4(6)$, and $R_4(14)$.   In Table \ref{tab-CMgraphs}, we have broken the case of each rational $j$-invariant with CM into subcases according to the torsion subgroup of the model, and the number of finite-degree, $\Q$-rational, cyclic, isogenies. Thus, after a finite computation, the table represents the complete list of isogeny-torsion graphs that are possible for elliptic curves with CM. In addition, we have included the LMFDB label of a $\Q$-isogeny class of an example in each category.
 
 Thus, from now on, we shall assume that our elliptic curves do not have complex multiplication.

		\begin{table}[h!]
	\renewcommand{\arraystretch}{1.2}
	\begin{tabular}{ |c|c|c|c| }
		\hline
		Isogeny Graph & Label & Isomorphism Types & LMFDB Label (Isogeny Class) \\
		
		\hline
		$E_1$ & $L_{1}$ & ([1]) & 37.a \\
		\hline
		
		\multirow{12}*{$E_1 \myiso E_2$} & {$L_{2}(2)$} & ([2],[2]) & $46.a$ \\
		\cline{2-4} 
		& \multirow{2}*{$L_{2}(3)$} & $([1],[1])$ & $196.a$ \\
		\cline{3-4}
		& & $([3],[1])$ & $44.a$ \\
		\cline{2-4} 
		& \multirow{2}*{$L_{2}(5)$} & $([1],[1])$ & $75.c$ \\
		\cline{3-4}
		& & $([5],[1])$ & $38.b$ \\
		\cline{2-4} 
		& \multirow{2}*{$L_{2}(7)$} & $([1],[1])$ & $208.d$ \\
		\cline{3-4}
		& & $([7],[1])$ & $26.b$ \\
		\cline{2-4}
		& $L_{2}(11)$ & $([1],[1])$ & $121.a$ \\
		\cline{2-4}
		& $L_{2}(13)$ & $([1],[1])$ & $147.b$ \\
		\cline{2-4}
		& $L_{2}(17)$ & $([1],[1])$ & $14450.b$ \\
		\cline{2-4}
		& $L_{2}(19)$ & $([1],[1])$ & $361.a$ \\
		\cline{2-4}
		& $L_{2}(37)$ & $([1],[1])$ & $1225.b$ \\
		\cline{2-4}
		& $L_{2}(43)$ & $([1],[1])$ & $1849.b$ \\
		\cline{2-4}
		& $L_{2}(67)$ & $([1],[1])$ & $4489.b$ \\
		\cline{2-4}
		& $L_{2}(163)$ & $([1],[1])$ & $26569.b$ \\
		\hline
		
		\multirow{5}*{$E_{1}\myiso E_{2}\myiso E_{3}$} & \multirow{3}*{$L_{3}(9)$} & $([1],[1],[1])$ & $175.b$ \\
		\cline{3-4}
		& & $([3],[3],[1])$ & $19.a$ \\
		\cline{3-4}
		& & $([9],[3],[1])$ & $54.b$ \\
		\cline{2-4}
		& \multirow{2}*{$L_{3}(25)$} & $([1],[1],[1])$ & $99.d$ \\
		\cline{3-4}
		& & $([5],[5],[1])$ & $11.a$ \\
		\hline
		
		\multirow{2}*{$E_{1} \myisot E_{2} \myisot E_{3} \myisot E_{4}$} & \multirow{2}*{$L_{4}$} & $([1],[1],[1],[1])$ & $432.e$ \\
		\cline{3-4}
		& & $([3],[3],[3],[1])$ & $27.a$ \\
		\hline
	\end{tabular}
	\caption{The list of all $L_{k}$ rational isogeny-torsion graphs}
	\label{L_k graphs}
\end{table}

 \begin{table}[h!]
 	\renewcommand{\arraystretch}{1.3}
 	\begin{tabular}{ |c|c|c|c| }
 		\hline
 		Graph Type & Label & Isomorphism Types & LMFDB Label  \\
 		\hline
 		
 		\multirow{4}*{\includegraphics[scale=0.21]{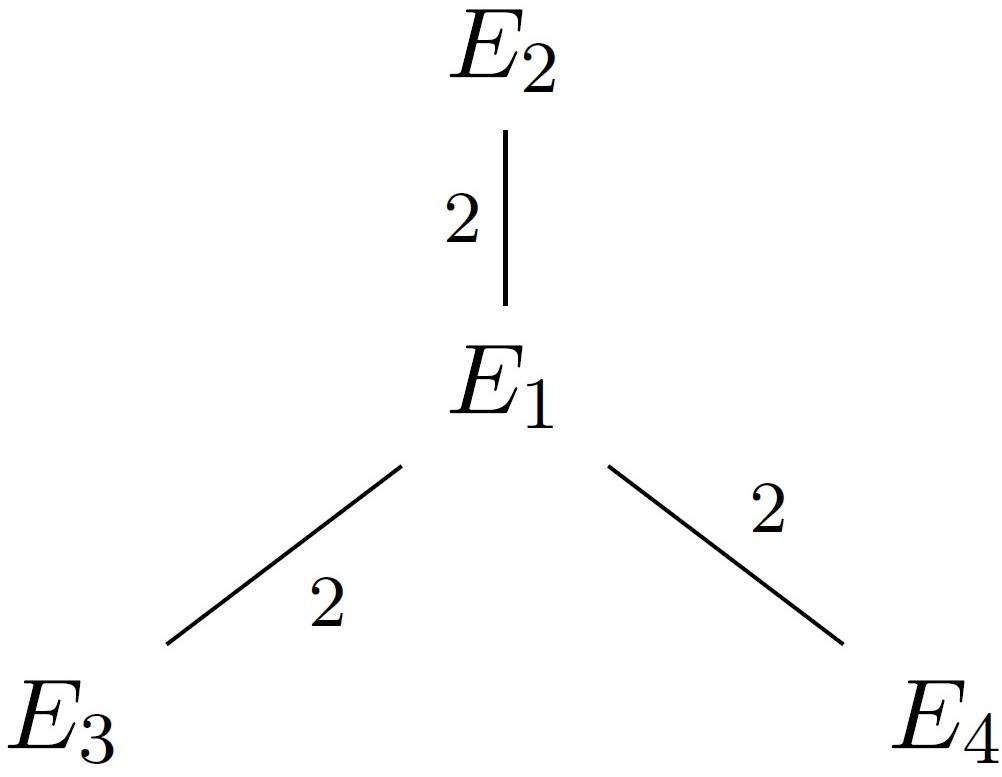}}& \multirow{4}*{$T_{4}$} & ([2,2], [2], [2], [2]) & 120.a \\
 		\cline{3-4}
 		& & ([2,2], [4], [2], [2]) & 33.a \\
 		\cline{3-4}
 		& & ([2,2], [4], [4], [2]) & 17.a \\
 		& & & \\
 		\hline
 		
 		\multirow{4}*{\includegraphics[scale=0.21]{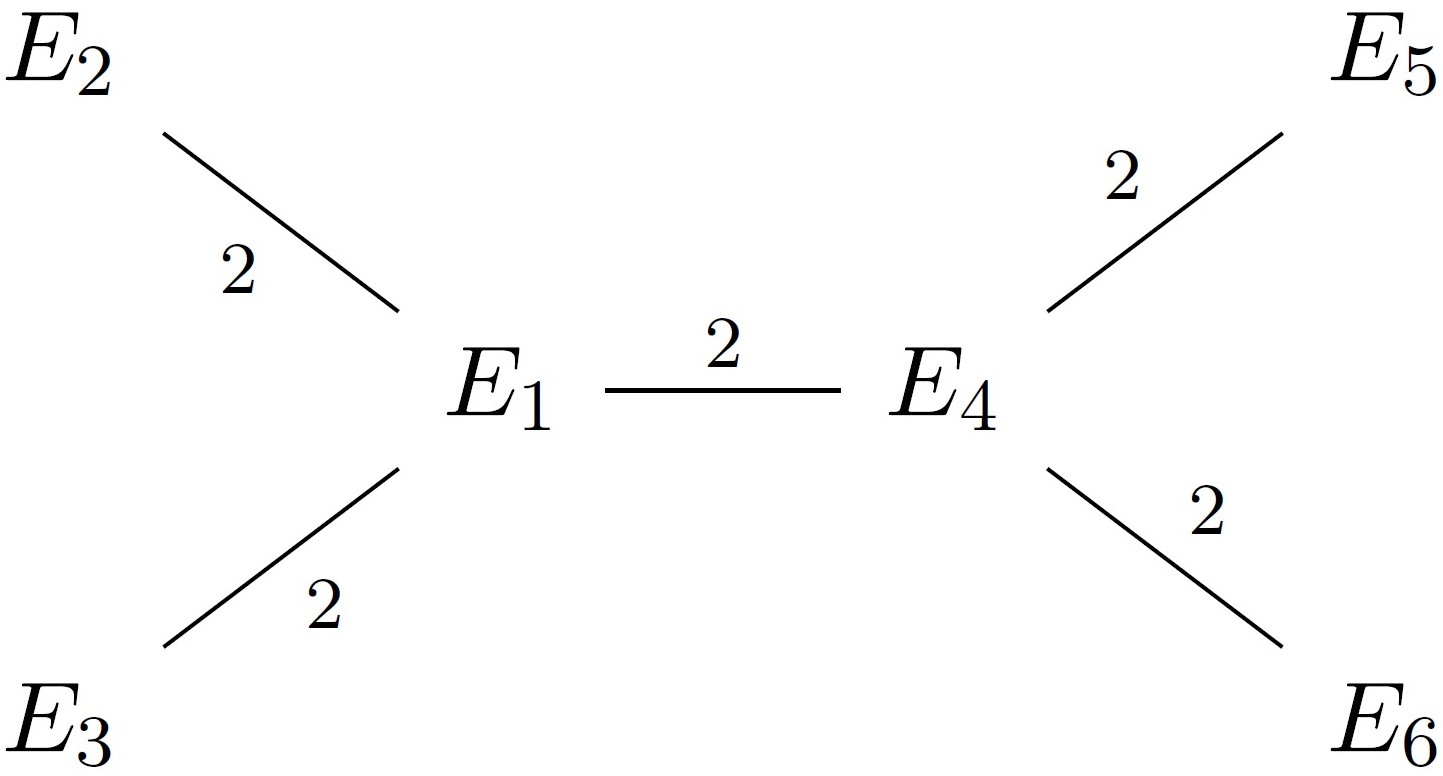}}& \multirow{4}*{$T_{6}$} & ([2,4],[4],[4],[2,2],[2],[2]) & 24.a \\
 		\cline{3-4}
 		& & ([2,4],[8],[4],[2,2],[2],[2]) & 21.a \\
 		\cline{3-4}
 		& & ([2,2],[2],[2],[2,2],[2],[2]) & 126.a \\
 		\cline{3-4}
 		& & ([2,2],[4],[2],[2,2],[2],[2]) & 63.a \\
 		\hline
 		
 		\multirow{6}*{\includegraphics[scale=0.32]{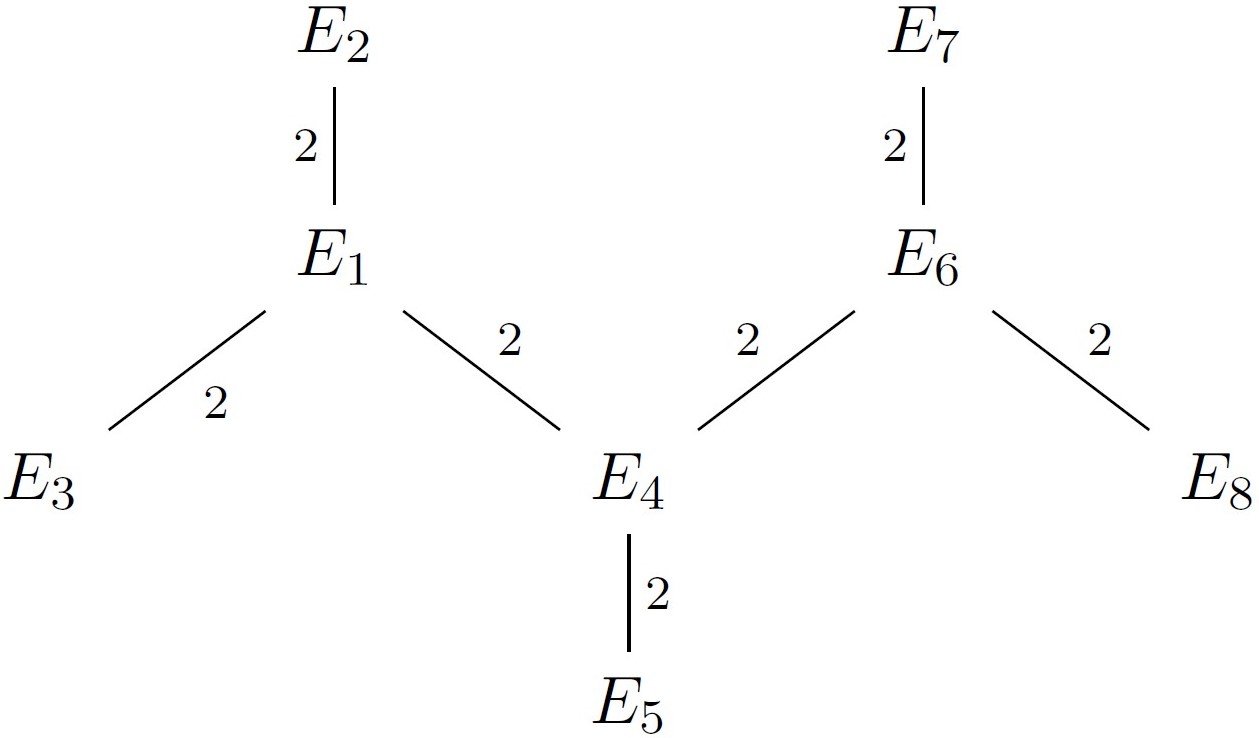}}& \multirow{6}*{$T_{8}$} & ([2,8],[8],[8],[2,4],[4],[2,2],[2],[2]) & 210.e \\
 		\cline{3-4}
 		& & ([2,4],[4],[4],[2,4],[4],[2,2],[2],[2]) & 195.a \\
 		\cline{3-4}
 		& & ([2,4],[4],[4],[2,4],[8],[2,2],[2],[2]) & 15.a \\
 		\cline{3-4}
 		& & ([2,4],[8],[4],[2,4],[4],[2,2],[2],[2]) & 1230.f \\
 		\cline{3-4}
 		& & ([2,2],[2],[2],[2,2],[2],[2,2],[2],[2]) & 45.a \\
 		\cline{3-4}
 		& & ([2,2],[4],[2],[2,2],[2],[2,2],[2],[2]) & 75.b \\
 		
 		\hline
 		
 	\end{tabular}
 	\caption{The list of all $T_{k}$ rational isogeny-torsion graphs}
 	\label{T_k graphs}
 \end{table}
 
 		\begin{table}[h!]
 	\renewcommand{\arraystretch}{1.3}
 	\begin{tabular}{ |c|c|c|c| }
 		\hline
 		Graph Type & Label & Isomorphism Types & LMFDB Label (Isogeny Class) \\
 		
 		\hline
 		\multirow{10}*{\includegraphics[scale=0.3]{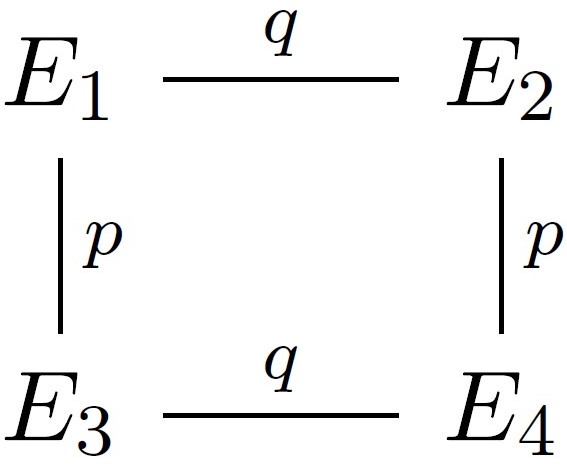}} & \multirow{2}*{$R_{4}(6)$} & $([2],[2],[2],[2])$ & $80.b$ \\
 		\cline{3-4}
 		& & $([6],[6],[2],[2])$ & $20.a$ \\
 		\cline{2-4}
 		& \multirow{2}*{$R_{4}(10)$} & $([2],[2],[2],[2])$ & $150.a$ \\
 		\cline{3-4}
 		& & $([10],[10],[2],[2])$ & $66.c$ \\
 		\cline{2-4}
 		& $R_{4}(14)$ & $([2],[2],[2],[2])$ & $49.a$ \\
 		\cline{2-4}
 		& \multirow{3}*{$R_{4}(15)$} & $([1],[1],[1],[1])$ & $400.d$ \\
 		\cline{3-4}
 		& & $([3],[3],[1],[1])$ & $50.a$ \\
 		\cline{3-4}
 		& & $([5],[5],[1],[1])$ & $50.b$ \\
 		\cline{2-4}
 		& \multirow{2}*{$R_{4}(21)$} & $([1],[1],[1],[1])$ & $1296.f$ \\
 		\cline{3-4}
 		& & $([3],[3],[1],[1])$ & $162.b$ \\
 		\hline
 		\multirow{3}*{\includegraphics[scale=0.25]{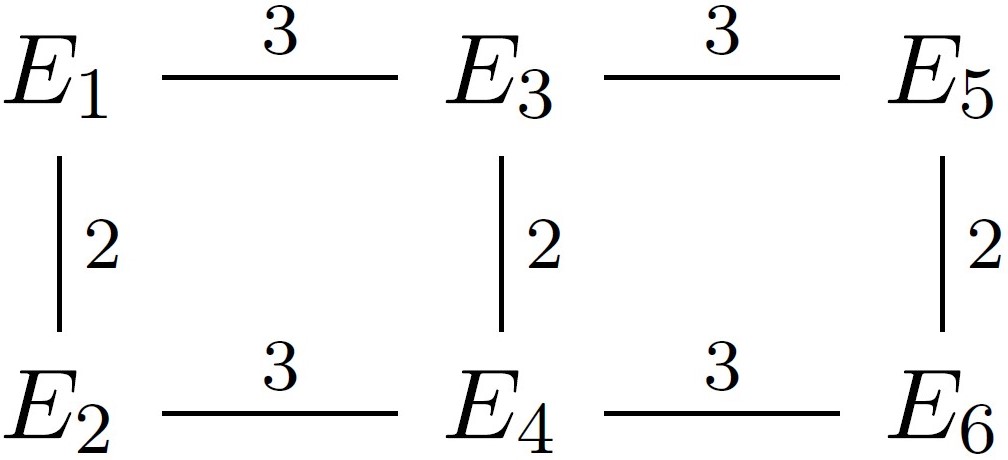}} & \multirow{3}*{$R_{6}$} & $([2],[2],[2],[2],[2],[2])$ & $98.a$ \\
 		\cline{3-4}
 		& & $([6],[6],[6],[6],[2],[2])$ & $14.a$ \\
 		& & & \\
 		\hline
 	\end{tabular}
 	\caption{The list of all $R_{k}$ rational isogeny-torsion graphs}
 	\label{R_k graphs}
 \end{table}

 \begin{table}[h!]
	\renewcommand{\arraystretch}{1.6}
	\begin{tabular} { |c|c|c|c| }
		\hline
		
		Graph Type & Label & Isomorphism Types & LMFDB Label \\
		\hline
		\multirow{4}*{\includegraphics[scale=0.25]{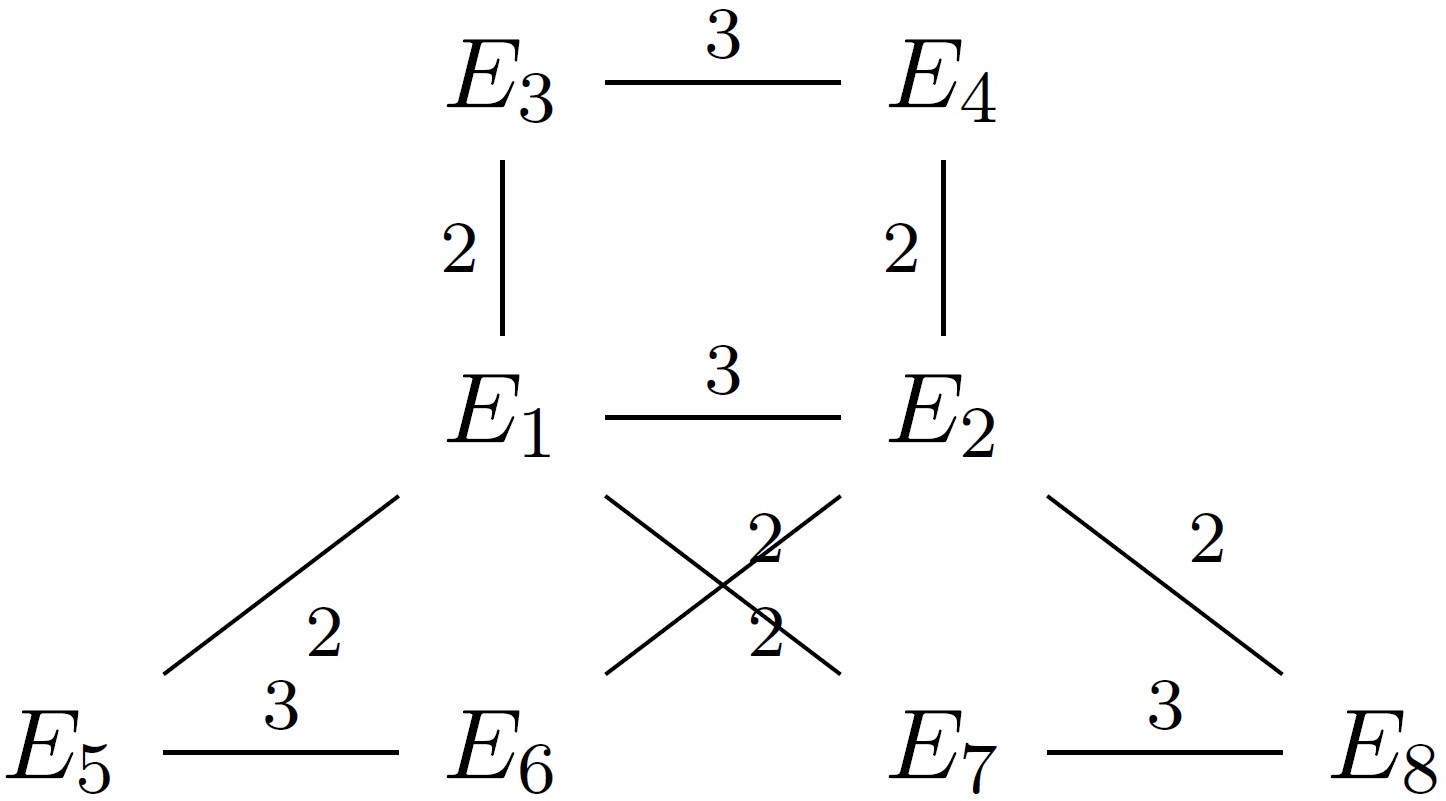}}& \multirow{4}*{$S$} & ([2,2],[2,2],[2],[2],[2],[2],[2],[2]) & 240.b \\
		\cline{3-4}
		& & ([2,2],[2,2],[4],[4],[2],[2],[2],[2]) & 150.b \\
		\cline{3-4}
		& & ([2,6],[2,2],[6],[2],[6],[2],[6],[2]) & 30.a \\
		\cline{3-4}
		& & ([2,6],[2,2],[12],[4],[6],[2],[6],[2]) & 90.c \\
		\hline
	\end{tabular}
	
	\caption{The list of all $S$ rational isogeny-torsion graphs}
	\label{S_k graphs}
\end{table}

 %\begin{center} 
 \begin{table}[h!]
 	\renewcommand{\arraystretch}{1.2}
 	\begin{tabular}{|c|c|c|c|c|c|}
 		\hline
 		$d_K$ & \multicolumn{2}{c|}{$j$} & Type & Torsion config. & LMFDB\\
 		\hline
 		\hline
 		\multirow{8}{*}{$-3$}  & \multirow{6}{*}{$0$} & $y^2=x^3+t^3, t=-3,1$  & $R_4(6)$ & $([6],[6],[2],[2])$ & \texttt{36.a4}\\
 		& &$y^2=x^3+t^3, t\neq -3,1$  & $R_4(6)$ & $([2],[2],[2],[2])$ & \texttt{144.a3}\\
 		& &$y^2=x^3+16t^3, t=-3,1$  & $L_4$ & $([3],[3],[3],[1])$ & \texttt{27.a3}\\
 		& &$y^2=x^3+16t^3, t\neq -3,1$  & $L_4$ & $([1],[1],[1],[1])$ & \texttt{432.e3}\\
 		& & $\!y^2=x^3+s^2,\,s^2\neq t^3,16t^3\!\!$  & $L_2(3)$ & $([3],[1])$ & \texttt{108.a2}\\
 		& & $\!y^2=x^3+s,\,s\neq t^3,16t^3\!\!$  & $L_2(3)$ & $([1],[1])$ & \texttt{225.c1}\\
 		\cline{2-6}
 		& \multirow{2}{*}{$2^4\cdot 3^3\cdot 5^3$} &  $y^2=x^3-15t^2x + 22t^3, t=1,3$ & $R_4(6)$ & $([6],[6],[2],[2])$ & \texttt{36.a1}\\
 		&  & $y^2=x^3-15t^2x + 22t^3, t\neq 1,3$ & $R_4(6)$ & $([2],[2],[2],[2])$ & \texttt{144.a1}\\
 		\cline{2-6}
 		& \multirow{2}{*}{$-2^{15}\cdot 3\cdot 5^3$} & $E^t, t=-3,1$ & $L_4$ & $([3],[3],[3],[1])$ & \texttt{27.a2}\\
 		& & $E^t, t\neq -3,1$  & $L_4$ & $([1],[1],[1],[1])$ & \texttt{432.e1}\\
 		\hline
 		\multirow{5}{*}{$-4$} & \multirow{4}{*}{$2^6\cdot 3^3$} & $y^2=x^3+tx, t=-1,4$  & $T_4$ & $([2,2],[4],[4],[2])$ & \texttt{32.a3}\\
 		& & $y^2=x^3+tx, t=-4,1$  & $T_4$ & $([2,2],[4],[2],[2])$ & \texttt{64.a3}\\
 		& & $y^2=x^3\pm t^2x, t\neq 1,2$  & $T_4$ & $([2,2],[2],[2],[2])$ & \texttt{288.d3}\\
 		& &  $y^2=x^3+sx$, $s\neq \pm t^2$  & $L_2(2)$ & $([2],[2])$ & \texttt{256.b1}\\
 		\cline{2-6}
 		& \multirow{3}{*}{$2^3\cdot 3^3\cdot 11^3$} & $y^2=x^3-11t^2x+14t^3, t=\pm 1$ & \multirow{3}{*}{$T_4$} & $([2,2],[4],[4],[2])$ & \texttt{32.a2}\\
 		& & $y^2=x^3-11t^2x+14t^3, t=\pm 2$ &  & $([2,2],[4],[2],[2])$ & \texttt{64.a1}\\
 		& & $y^2=x^3-11t^2x+14t^3, t\neq \pm 1, \pm 2$ &  & $([2,2],[2],[2],[2])$ & \texttt{288.d1}\\
 		\hline
 		\multirow{2}{*}{$-7$} & \multicolumn{2}{c|}{$-3^3\cdot 5^3$}  & $R_4(14)$ & $([2],[2],[2],[2])$ & \texttt{49.a2}\\
 		& \multicolumn{2}{c|}{$3^3\cdot 5^3\cdot 17^3$}  & $R_4(14)$ & $([2],[2],[2],[2])$  & \texttt{49.a1}\\
 		\hline
 		$-8$ & \multicolumn{2}{c|}{$2^6\cdot 5^3$}  & $L_2(2)$ & $([2],[2])$ & \texttt{256.a1}\\
 		\hline
 		$-11$ & \multicolumn{2}{c|}{$-2^{15}$}  & $L_2(11)$ & $([1],[1])$ & \texttt{121.b1}\\
 		\hline
 		$-19$ & \multicolumn{2}{c|}{$-2^{15}\cdot 3^3$}  & $L_2(19)$ & $([1],[1])$ & \texttt{361.a1}\\
 		\hline
 		$-43$ & \multicolumn{2}{c|}{$-2^{18}\cdot 3^3\cdot 5^3$}  & $L_2(43)$ & $([1],[1])$ & \texttt{1849.b1}\\
 		\hline
 		$-67$ & \multicolumn{2}{c|}{$-2^{15}\cdot 3^3\cdot 5^3\cdot 11^3$}  & $L_2(67)$ & $([1],[1])$ & \texttt{4489.b1}\\
 		\hline
 		$-163$ & \multicolumn{2}{c|}{$-2^{18}\cdot 3^3\cdot 5^3\cdot 23^3\cdot 29^3$}  & $L_2(163)$ & $([1],[1])$ & \texttt{26569.a1}\\
 		\hline
 	\end{tabular}
 	\caption{The list of rational $j$-invariants with CM and the possible isogeny-torsion graphs that occur, where $E^t$ denotes the curve $y^2=x^3-38880t^2x+2950992t^3$.}
 	\label{tab-CMgraphs}
 \end{table}
 %\end{center} 

	\section{Preliminaries}\label{sec-preliminaries}
	In this section we include a number of well-known results about torsion subgroups, isogenies, and Galois representations that we will use throughout the paper. We remark here that all of our elliptic curves are defined over $\Q$ and all our isogenies are $\Q$-rational and have finite cyclic kernels unless otherwise stated. First, we cite Mazur's theorem which classifies the possible torsion subgroups for an elliptic curve defined over $\Q$. 
	
	\begin{thm}[Mazur \cite{mazur1}]\label{thm-mazur}
		Let $E/\Q$ be an elliptic curve. Then
		\[
		E(\Q)_\tor\simeq
		\begin{cases}
		\Z/M\Z &\text{with}\ 1\leq M\leq 10\ \text{or}\ M=12,\ \text{or}\\
		\Z/2\Z\oplus \Z/2M\Z &\text{with}\ 1\leq M\leq 4.
		\end{cases}
		\]
	\end{thm}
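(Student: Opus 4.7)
The plan is to translate the problem into one about rational points on modular curves. For each integer $N \geq 1$, the modular curve $Y_1(N)$ classifies pairs $(E,P)$ where $P$ is a point of exact order $N$ on $E$, and $X_1(N)$ denotes its compactification. A non-cuspidal rational point on $X_1(N)$ corresponds (up to quadratic twist ambiguity for small $N$) to an elliptic curve over $\Q$ with a rational point of order $N$. The goal, then, is to classify those $N$ for which $X_1(N)(\Q)$ contains non-cuspidal points.

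For small $N$, namely $N \in \{1,\dots,10,12\}$ and $N = 2M$ with $M \leq 4$ in the $(\Z/2\Z \oplus \Z/2M\Z)$-case, one can check directly that the relevant modular curve has genus zero with a rational point, so infinitely many rational points, each giving rise to an elliptic curve with the prescribed torsion; explicit Tate-normal-form parametrizations of $Y_1(N)$ realize this. The substantive content is therefore the \emph{exclusion} of all other $N$. By group theory (using that $E(\Q)_\tor$ is a finite abelian group together with the Weil pairing, which forces the full two-torsion to live in $\Q$ whenever $\Z/2\Z$ appears as a direct summand), it suffices to rule out a prime $p$ dividing the order of $E(\Q)_\tor$ for each prime $p \in \{11,13,17,19,23\}$ and every prime $p \geq 29$, together with a short list of small composite orders ($N = 11, 14, 15, 16, 20, 21, 24$, and so on). The exclusion of a prime $p$ reduces to showing $X_1(p)(\Q)$ consists only of cusps; the composite cases follow from a genus and rational-point analysis of $X_1(N)$, often via the covering $X_1(N) \to X_0(N)$.

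The hard part, and the heart of Mazur's theorem, is the prime case for $p = 11$ and $p \geq 17$. Here $X_1(p)$ has positive genus, and one studies the Jacobian $J_1(p)$, or more practically $J_0(p)$. Mazur's key innovation is the Eisenstein ideal $\mathfrak{I} \subset \mathbb{T}$ in the Hecke algebra acting on $J_0(p)$: quotienting by $\mathfrak{I}$ produces the \emph{Eisenstein quotient} $\widetilde{J}$, and the central theorem is that $\widetilde{J}(\Q)$ is \emph{finite}. This is proved by a descent argument, using that the Eisenstein ideal cuts out a piece where, by comparison with the cuspidal divisor class $(0) - (\infty)$ (which has order equal to the numerator of $(p-1)/12$), the Mordell--Weil group is controlled by class field theory and Kummer theory on multiplicative-type subgroups. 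Combining this finiteness with information about the specialization map and the component group of the N\'eron model at $p$ shows that any hypothetical non-cuspidal rational point on $X_0(p)$, and hence on $X_1(p)$, would contradict either good reduction away from $p$ or the explicit structure of $\widetilde{J}(\Q)$.

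Once $p = 11$ and $p \geq 17$ are ruled out as divisors of $|E(\Q)_\tor|$, one handles $p = 13$ separately (showing $X_1(13)(\Q)$ has only cusps, which follows from an explicit analysis since the curve has genus $2$), and finally one eliminates the remaining composite orders by a case-by-case genus-and-rank discussion of $X_1(N)$ (for instance, $X_1(16)$ has genus $2$, and one shows its rational points are cusps together with CM points giving nothing new). The main obstacle throughout is the finiteness of $\widetilde{J}(\Q)$ and the careful tracking of how non-cuspidal rational points would have to reduce modulo $p$; all subsequent eliminations use this as a black box together with elementary modular-curve covering arguments.
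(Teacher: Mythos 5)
The paper does not prove this statement: it is quoted as Mazur's theorem with a citation to \cite{mazur1} and used as a black box throughout, so there is no internal proof to compare against. Your outline is a faithful summary of the proof that exists in the literature: the reduction via the Weil pairing to the shape $\Z/a\Z\times\Z/b\Z$ with $a\le 2$, the genus-zero Tate-normal-form parametrizations realizing the admissible $N$, the reduction of the exclusions to showing $Y_1(N)(\Q)=\emptyset$, the previously known cases $N=11$ (Billing--Mahler) and $N=13$ (Mazur--Tate, genus $2$) and the composite levels (Kubert, Ogg, et al.), and the Eisenstein ideal construction with the finiteness of the Eisenstein quotient $\widetilde{J}(\Q)$ as the engine for the remaining primes. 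Two small inaccuracies: $N=11$ is prime, not composite, so it does not belong on your list of composite orders (though you also treat it correctly in the prime case); and the Weil-pairing step is better stated as ``$(\Z/m\Z)^2\subseteq E(\Q)$ forces $\zeta_m\in\Q$, hence $m\le 2$,'' which is what confines the non-cyclic possibilities to $\Z/2\Z\oplus\Z/2M\Z$. Be aware that what you have written is a roadmap rather than a proof: the finiteness of $\widetilde{J}(\Q)$ and the specialization/component-group argument at $p$ are asserted, not established, and they constitute essentially the entire content of Mazur's paper. That level of detail is appropriate here only because the paper itself treats the theorem as known.
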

	
	We will also use the following result of Kenku on the maximum number of isogenous curves over $\Q$ in an isogeny class. The notation $C(E)$ and $C_p(E)$ introduced in the statement below will be used often in our proofs. The statement below compiles a number of results shown by Kenku in \cite{kenku}. Before we cite the result, we make the following definition.
	
	\begin{defn}
		Let $E/\Q$ be an elliptic curve. We define $C(E)$ as the number of distinct finite $\Q$-rational cyclic subgroups of $E$ (including the trivial subgroup), and we define $C_p(E)$ similarly to $C(E)$ but only counting $\Q$-rational cyclic subgroups of order a power of $p$ (like in the definition of $C(E)$, this includes the trivial subgroup), for each prime $p$. 
	\end{defn}

Notice that it follows from the definition that $C(E)=\prod_p C_p(E)$. 
	
	\begin{thm}[Kenku, \cite{kenku}]\label{thm-kenku} There are at most eight $\Q$-isomorphism classes of elliptic curves in each $\Q$-isogeny class. More concretely, let $E / \Q$ be an elliptic curve, then $C(E)=\prod_p C_p(E)\leq 8$. Moreover, each factor $C_p(E)$ is bounded as follows:
		\begin{center}
			\begin{tabular}{c|ccccccccccccc}
				$p$ & $2$ & $3$ & $5$ & $7$ & $11$ & $13$ & $17$ & $19$ & $37$ & $43$ & $67$ & $163$ & \text{else}\\
				\hline 
				$C_p\leq $ & $8$ & $4$ & $3$ & $2$ & $2$ & $2$ & $2$ & $2$ & $2$ & $2$ & $2$ & $2$ & $1$.
			\end{tabular}
		\end{center}
		Moreover:
		\begin{enumerate}
			\item If $C_{p}(E) = 2$ for a prime $p$ greater than $7$, then $C_{q}(E) = 1$ for all other primes $q$. 
			\item Suppose $C_{7}(E) = 2$, then $C(E) \leq 4$. Moreover, we have $C_{3}(E) = 2$, or $C_{2}(E) = 2$, or $C(E) = 2$.
			\item $C_{5}(E) \leq 3$ and if $C_{5}(E) = 3$, then $C(E) = 3$.
			\item If $C_{5}(E) = 2$, then $C(E) \leq 4$. Moreover, either $C_{3}(E) = 2$, or $C_{2}(E) = 2$, or $C(E) = 2$. 
			\item $C_{3}(E) \leq 4$ and if $C_{3}(E) = 4$, then $C(E) = 4$. 
			\item If $C_{3}(E) = 3,$ then $C(E) \leq 6$. Moreover, $C_{2}(E) = 2$ or $C(E) = 3$.
			\item If $C_{3}(E) = 2$, then $C_{2}(E) \leq 4$.
		\end{enumerate}
	\end{thm}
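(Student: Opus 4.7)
The result is essentially a compilation of statements from Kenku's paper \cite{kenku}, and the proof rests on two inputs: the complete classification of cyclic $\Q$-rational isogeny degrees on elliptic curves over $\Q$ (Theorem \ref{thm-ratnoncusps}, due to Fricke, Kenku, Klein, Kubert, Ligozat, Mazur, and Ogg) and an elementary coprimality principle. First I would establish the principle: if $E/\Q$ admits $\Q$-rational cyclic subgroups $C_m, C_n$ of coprime orders $m$ and $n$, then $C_m\cap C_n=0$ and the sum $C_m\oplus C_n$ is a $\Q$-rational cyclic subgroup of order $mn$, so $mn$ must appear in the list of allowed cyclic isogeny degrees. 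Its contrapositive is the workhorse: if $mn$ is \emph{not} on the list, one of $C_m,C_n$ must be trivial.

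Next, I would carry out a prime-by-prime analysis of $C_p(E)$. From the classification, the cyclic $\Q$-rational $p$-power isogeny degrees are $\{1,2,4,8,16\}$ for $p=2$; $\{1,3,9,27\}$ for $p=3$; $\{1,5,25\}$ for $p=5$; $\{1,7\}$ for $p=7$; $\{1,p\}$ for $p\in\{11,13,17,19,37,43,67,163\}$; and $\{1\}$ otherwise (so $C_p(E)=1$ for those primes). For each $p$, the $\Q$-rational cyclic $p$-power subgroups form a subposet of the lattice of cyclic subgroups of $E[p^\infty]$, and the maximal cardinality of this subposet compatible with the list of allowed degrees gives the bounds $C_p\leq 8,4,3,2,2$ respectively. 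For example, $C_7(E)\leq 2$ because no $49$-isogeny exists and because a pair of distinct $\Q$-rational cyclic subgroups of order $7$ would force a rational point on a split Cartan modular curve known to have none for non-CM $E/\Q$; analogous reasoning caps $C_5(E)\leq 3$, $C_3(E)\leq 4$, and $C_2(E)\leq 8$ by ruling out incompatible configurations of $p$-power filtrations.

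Finally, the structural claims (1)--(7) all reduce to combining these prime-by-prime bounds with the coprimality principle. For claim (1), if $C_p(E)\geq 2$ for $p>7$, then $C_q(E)\geq 2$ for any other prime $q$ would require $pq$ to be an allowed degree, but no such product lies in the list. Claim (2) uses that $7q$ is allowed only for $q\in\{2,3\}$ (giving $14,21$), while $28,49,63,42$ are all absent, which simultaneously caps $C_2,C_3\leq 2$ and excludes both being $>1$ at once, yielding $C(E)\leq 4$. Claims (3)--(7) follow the same template using the allowed products $10,15,25,9,18,12,16$, etc., and the prime-power bounds from the middle step. I expect the main technical obstacle to be precisely that middle step, especially the bound $C_2(E)\leq 8$: it rests on the detailed structure of $\Q$-rational $2$-power subgroups (and hence on rational points of modular curves like $X_0(32)$ and its associated covers), and is the most intricate part of Kenku's original analysis.
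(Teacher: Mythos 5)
First, a point of comparison: the paper does not prove Theorem \ref{thm-kenku} at all — it is quoted from Kenku's paper \cite{kenku} ("the statement below compiles a number of results shown by Kenku"), so there is no in-paper argument to measure your proposal against. Judged on its own, your architecture — Theorem \ref{thm-ratnoncusps} plus a coprimality principle — is the correct skeleton, and some items do follow directly from it (claim (1), and $C_p(E)=1$ for primes outside the list).

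However, there is a genuine gap: your coprimality principle only combines cyclic subgroups of the single curve $E$, and that is not enough for most of the claims. In claim (2), for instance, you assert that the absence of $28$ from the list "caps $C_2\leq 2$"; but $C_2(E)=4$ can occur with $E$ having full rational $2$-torsion and no cyclic subgroup of order $4$ whatsoever, in which case the only product your principle produces is $14$, which \emph{is} allowed. To exclude this configuration one must pass to an isogenous curve: two independent $2$-isogenies on $E$ produce a cyclic $4$-isogeny on $E/\langle P_2\rangle$, which together with the $7$-isogeny yields a forbidden cyclic $28$-isogeny there — and one must also observe that $C(E)$ and $C_p(E)$ are isogeny-class invariants. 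You never state this passage-to-isogenous-curves lemma, yet claims (2), (4), (6), (7) and the refinements in (3) and (5) all silently require it. Separately, the bounds $C_2\leq 8$, $C_3\leq 4$, $C_5\leq 3$ do not follow from "the maximal cardinality of the subposet compatible with the allowed degrees": $E[4]$ alone contains ten cyclic subgroups, every one of allowed order, so the degree list by itself gives no bound. One needs to rule out three pairwise independent $p$-isogenies via surjectivity of the determinant (exactly the argument in the paper's Lemma \ref{lem-27}), and for $p=2$ the full analysis of rational $2$-power level structures that constitutes the bulk of Kenku's work; you flag the latter as the "main technical obstacle" but do not supply it. A minor further imprecision: for two independent $7$-isogenies the relevant obstruction is simply that $X_0(49)$ has only cusps (which disposes of CM curves as well), so the caveat "for non-CM $E$" attached to your split Cartan argument is both unnecessary and, as stated, leaves the CM case open.
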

	
	The next result we quote describes the possible isomorphism types of $\Gal(\Q(E[p])/\Q)$. In particular, fix a $\Z/p\Z$-basis of $E[p]$, and let $\rho_{E,p}\colon \Gal(\overline{\Q}/\Q)\to \GL(E[p])\cong \GL(2,\Z/p\Z)$ be the representation associated to the natural Galois action on $E[p]$, with respect to the chosen basis of $E[p]$. Then, $\Gal(\Q(E[p])/\Q)\cong \rho_{E,p}(\GQ)\subseteq \GL(E[p])\cong \GL(2,\Z/p\Z)$.
	
	\begin{thm}[Serre, \cite{serre1}]\label{thm-serre2}
		Let $E/\Q$ be an elliptic curve. Let $G$ be the image of $\rho_{E,p}$, and suppose $G\neq \GL(E[p])$. Then, there is a $\Z/p\Z$-basis of $E[p]$ such that one of the following possibilities holds:
		\begin{enumerate}
			\item $G$ is contained in the normalizer of a split Cartan subgroup of $\GL(E[p])$.
			\item $G$ is contained in the normalizer of a non-split Cartan subgroup of $\GL(E[p])$.
			\item The projective image of $G$ in $\PGL(E[p])$ is isomorphic to $A_4$, $S_4$ or $A_5$, where $S_n$ is the symmetric group and $A_n$ the alternating group (note: only $S_4$ occurs over $\Q$).
			\item $G$ is contained in a Borel subgroup of $\GL(E[p])$.
		\end{enumerate}
	\end{thm}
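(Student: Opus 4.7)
The plan is to reduce to a purely group-theoretic classification of subgroups of $\PGL(2,\Z/p\Z)$. Let $\overline G$ denote the image of $G$ in $\PGL(E[p])\cong \PGL(2,\Z/p\Z)$ under the natural projection $\pi\colon \GL(E[p])\to \PGL(E[p])$. The central tool is Dickson's theorem: every proper subgroup of $\PGL(2,\Z/p\Z)$ is of one of the following types: (a) contained in the stabilizer of a point of $\PP^1(\Z/p\Z)$ (a Borel); (b) contained in the dihedral normalizer of a split torus; (c) contained in the dihedral normalizer of a non-split torus; or (d) isomorphic to $A_4$, $S_4$, or $A_5$.

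Next, I would pull back each case to $\GL(E[p])$ via $\pi$. The preimage of the stabilizer of a point of $\PP^1(\Z/p\Z)$ is a Borel subgroup of $\GL(E[p])$, giving case (4); the preimage of the dihedral normalizer of a split (resp.\ non-split) torus is the normalizer of a split (resp.\ non-split) Cartan subgroup of $\GL(E[p])$, giving cases (1) and (2); and the exceptional projective images give case (3). Choosing a $\Z/p\Z$-basis of $E[p]$ adapted to the relevant Cartan (or to the point stabilized in the Borel case) yields the statement as written. Apart from the parenthetical claim, the theorem is then a direct consequence of Dickson's classification and the structure of Borel and Cartan subgroups.

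The main obstacle, and the genuinely arithmetic step, is the parenthetical remark that only $S_4$ occurs over $\Q$. The key observation is that $\det\rho_{E,p}$ equals the mod-$p$ cyclotomic character $\GQ\to (\Z/p\Z)^\times$, which is surjective for every prime $p$. Since scalar matrices in $\GL(E[p])$ have square determinants, the composition
\[
G\ \longrightarrow\ (\Z/p\Z)^\times\ \longrightarrow\ (\Z/p\Z)^\times/\bigl((\Z/p\Z)^\times\bigr)^2
\]
descends to a surjective homomorphism $\overline G\to \Z/2\Z$ whenever $p\geq 3$. Consequently, the abelianization of $\overline G$ must have even order. Among the three exceptional groups, $A_4$ has abelianization $\Z/3\Z$, $S_4$ has abelianization $\Z/2\Z$, and $A_5$ is perfect; hence only $S_4$ is compatible with this constraint, ruling out $A_4$ and $A_5$. (When $p=2$, every subgroup of $\PGL(2,\Z/2\Z)\cong S_3$ is already dihedral or cyclic, so no genuinely exceptional case arises.) This completes the proof.
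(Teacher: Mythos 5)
The paper does not actually prove this statement: it is quoted verbatim from Serre, so there is no internal proof to compare against. Your overall strategy is the standard one, and your argument for the parenthetical refinement (surjectivity of $\det\rho_{E,p}$ via the Weil pairing, scalars having square determinant, hence $\overline G$ surjecting onto $(\Z/p\Z)^\times/((\Z/p\Z)^\times)^2\cong\Z/2\Z$ and forcing the abelianization of $\overline G$ to have even order, which kills $A_4$ and $A_5$) is exactly Serre's argument and is correct.

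There is, however, a genuine gap in the first step: your statement of Dickson's theorem is not correct. For $p\geq 7$ the group $\PSL(2,\Z/p\Z)$ is a proper subgroup of $\PGL(2,\Z/p\Z)$ of index $2$ that is not contained in a Borel, not contained in the normalizer of any Cartan, and not isomorphic to $A_4$, $S_4$, or $A_5$; the correct classification includes a fifth case, namely subgroups containing $\PSL(2,\Z/p\Z)$. Moreover, the hypothesis $G\neq \GL(E[p])$ does not by itself force $\overline G$ to be a proper subgroup of $\PGL(E[p])$ avoiding this case: a priori $G$ could be, say, the index-$2$ subgroup of matrices with square determinant, whose projective image is all of $\PSL(2,\Z/p\Z)$. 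To close the gap you must use the arithmetic input (surjectivity of the determinant) one more time: a standard lemma of Serre says that if $\det(G)=(\Z/p\Z)^\times$ and the image of $G$ in $\PGL(E[p])$ contains $\PSL(2,\Z/p\Z)$, then $G=\GL(E[p])$, contradicting the hypothesis. With that case disposed of, the remaining cases of Dickson's classification pull back to Borels and Cartan normalizers exactly as you describe, and the rest of your argument goes through.
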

	
	Rouse and Zureick-Brown have classified all the possible $2$-adic images of $\rho_{E,2}\colon \GQ\to \GL(2,\Z_2)$ (see previous results of Dokchitser and Dokchitser \cite{dok} on the surjectivity of $\rho_{E,2} \bmod 2^n$). 
	
	\begin{thm}[Rouse, Zureick-Brown, \cite{rouse}]\label{thm-rzb} Let $E$ be an elliptic curve over $\Q$ without complex multiplication. Then, there are exactly $1208$ possibilities for the $2$-adic image $\rho_{E,2^\infty}(\GQ)$, up to conjugacy in $\GL(2,\Z_2)$. Moreover, the index of $\rho_{E,2^\infty}(\Gal(\overline{\Q}/\Q))$ in $\GL(2,\Z_2)$ divides $64$ or $96$.
	\end{thm}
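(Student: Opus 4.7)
The strategy is to classify the finite-index subgroups $H \subseteq \GL(2,\Z_2)$ that can arise as $\rho_{E,2^\infty}(\GQ)$ for some non-CM elliptic curve $E/\Q$, and then read off the count and the index bound at the end. Any such $H$ must satisfy two necessary conditions: (i) $\det(H) = \Z_2^\times$, since the determinant of $\rho_{E,2^\infty}$ is the $2$-adic cyclotomic character, and (ii) $H$ contains an element $\GL(2,\Z_2)$-conjugate to $\smallmat{1}{0}{0}{-1}$, corresponding to the image of complex conjugation on the $2$-adic Tate module. By Serre's open image theorem, $H$ has finite index in $\GL(2,\Z_2)$ in the non-CM case, so the classification is genuinely finite.

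First, I would organize the candidate subgroups into a rooted tree. The root is $\GL(2,\Z_2)$, and the children of a node $H$ are the maximal proper subgroups of $H$ still satisfying (i) and (ii). To each node $H$ one attaches the modular curve $X_H/\Q$ whose non-CM, non-cuspidal rational points correspond (up to quadratic twist, which is recovered by tracking the scalar subgroup) to elliptic curves $E/\Q$ whose $2$-adic image lies in a conjugate of $H$. For $H' \subseteq H$ there is a natural forgetful covering $X_{H'} \to X_H$, compatible with the $j$-line map $X_H \to X(1)$, so the arithmetic of the tree is governed by these modular curves.

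Second, traverse the tree. At each node, compute an explicit model for $X_H$, determine its genus, and enumerate its non-CM, non-cuspidal rational points. Prune at $H$ whenever the only such points correspond to elliptic curves whose $2$-adic image is strictly larger than $H$ (so no new image is contributed by the subtree below); otherwise recurse into the children. The tree is finite because a strictly descending chain of subgroups of $\GL(2,\Z_2)$ with surjective determinant stabilizes at bounded index, and Serre's theorem bounds the depth at which new non-CM examples can appear.

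The main obstacle is the rational-points computation at each node, particularly for those $X_H$ of genus $\geq 2$. Genus $0$ and genus $1$ cases can be handled classically and by descent on elliptic curves, but the $2$-adic tree contains many higher-genus modular curves requiring serious machinery: Chabauty--Coleman, Mordell--Weil sieves, elliptic Chabauty, covering collections, and explicit Magma model computations, building on earlier work of Dokchitser--Dokchitser on mod-$2^n$ surjectivity. Once the census is carried out exhaustively over the tree, one obtains exactly $1208$ conjugacy classes of $2$-adic images, and tallying the largest indices encountered at the terminal nodes yields the divisibility bound by $64$ or $96$.
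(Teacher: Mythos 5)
The paper does not prove this statement at all: it is quoted verbatim as an external result of Rouse and Zureick--Brown, with the citation \cite{rouse} doing all the work. So there is no internal argument to compare yours against; what you have written is a summary of the strategy of the cited paper itself.

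As such a summary, your outline is broadly faithful: the necessary conditions (surjective determinant, an element realizing complex conjugation), the tree of subgroups with attached modular curves $X_H$, the coverings $X_{H'}\to X_H$ over the $j$-line, and the pruning-by-rational-points traversal are indeed the skeleton of the Rouse--Zureick-Brown argument. But two caveats keep this from being a proof. First, your finiteness argument for the tree is wrong as stated: $\GL(2,\Z_2)$ has open subgroups with surjective determinant of arbitrarily large index, so ``a strictly descending chain \ldots stabilizes at bounded index'' is false group-theoretically. Serre's open image theorem gives finite index for each individual non-CM curve, but a \emph{uniform} bound over all such curves is exactly the content of the theorem being proved; it emerges only from the rational-point computations (every sufficiently deep $X_H$ is shown to have no new non-cuspidal, non-CM rational points), not from an a priori group-theoretic or depth bound. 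Second, the specific conclusions --- the count $1208$ and the divisibility of the index by $64$ or $96$ --- are raw outputs of an exhaustive computation involving hundreds of explicit models, Chabauty--Coleman, Mordell--Weil sieves, and covering collections; no outline at this level of detail can certify those numbers. Your proposal is therefore an accurate roadmap but not a verification, which is presumably why the paper, like most users of this theorem, simply cites it.
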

	
%	\begin{conj}[Sutherland, Zywina, \cite{sutherland}]\label{conj-zyw} Let $E/\Q$ be an elliptic curve. Let $G$ be the image of $\rho_{E,p}$. Then, there are precisely $63$ isomorphism types of images. 
%	\end{conj}
	
	The $\Q$-rational points on the modular curves $X_0(N)$ have been described completely in the literature, for all $N\geq 1$. One of the most important milestones in their classification was \cite{mazur1}, where Mazur dealt with the case when $N$ is prime. The complete classification of $\Q$-rational points on $X_0(N)$, for any $N$, was completed due to work by Fricke, Kenku, Klein, Kubert, Ligozat, Mazur  and Ogg, among others (see the summary tables in \cite{lozano0}).
	
	\begin{thm}\label{thm-ratnoncusps} Let $N\geq 2$ be a number such that $X_0(N)$ has a non-cuspidal $\Q$-rational point. Then:
		\begin{enumerate}
			\item $N\leq 10$, or $N= 12,13, 16,18$ or $25$. In this case $X_0(N)$ is a curve of genus $0$ and its $\Q$-rational points form an infinite $1$-parameter family, or
			\item $N=11,14,15,17,19,21$, or $27$. In this case $X_0(N)$ is a curve of genus $1$, i.e.,~$X_0(N)$ is an elliptic curve over $\Q$, but in all cases the Mordell-Weil group $X_0(N)(\Q)$ is finite, or 
			
			\item $N=37,43,67$ or $163$. In this case $X_0(N)$ is a curve of genus $\geq 2$ and (by Faltings' theorem) there are only finitely many $\Q$-rational points, which are known explicitly.
		\end{enumerate} 
	\end{thm}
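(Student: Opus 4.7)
The plan is to stratify according to the genus $g=g(X_0(N))$, which is computed via the standard genus formula applied to the natural cover $X_0(N)\to X(1)$. A direct enumeration gives $g=0$ for $N\in\{1,\dots,10,12,13,16,18,25\}$, $g=1$ for $N\in\{11,14,15,17,19,20,21,24,27,32,36,49\}$, and $g\geq 2$ otherwise; these three regimes will yield (1), (2), and (3) respectively.

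The genus $0$ case is immediate: since the cusp at infinity is always $\Q$-rational, $X_0(N)$ is $\Q$-isomorphic to $\PP^1_\Q$, producing the claimed infinite one-parameter family. For the genus $1$ case, after choosing a rational cusp as origin each $X_0(N)$ becomes an elliptic curve over $\Q$, and the task reduces to a Mordell--Weil rank computation; a two-descent on an explicit Weierstrass model (or a look-up in the LMFDB) verifies that $X_0(N)(\Q)$ has rank $0$ in every case. One then inspects the resulting finite torsion group to separate the $N$ that yield a non-cuspidal rational point (exactly $N=11,14,15,17,19,21,27$) from those for which every rational point turns out to be a cusp (e.g. $N=20,24,32,36,49$).

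The main obstacle is the genus $\geq 2$ case. Here Faltings' theorem guarantees finiteness, but identifying the precise list of $N$ admitting a non-cuspidal rational point requires substantially more work. For prime $N$ with $g\geq 2$ this is the content of Mazur's landmark theorem \cite{mazur1}, which isolates $N\in\{37,43,67,163\}$ as the only such primes; reproducing his argument (the Eisenstein quotient, Eichler--Shimura, and the formal immersion method at supersingular primes) is out of reach here and we simply cite it. For composite $N$ with $g\geq 2$ not yet covered, one exploits the tower of covers $X_0(N)\to X_0(M)$ for each proper divisor $M\mid N$: any non-cuspidal rational point on $X_0(N)$ projects to a non-cuspidal rational point on $X_0(M)$ equipped with an additional cyclic $N/M$-isogeny structure, and the genus $0$, genus $1$, and Mazur analyses above eliminate every remaining candidate after a finite combinatorial case check. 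This bookkeeping was carried out by Kenku, Kubert, Ligozat, and Ogg, and is tabulated in \cite{lozano0}, which we quote in place of repeating the enumeration.
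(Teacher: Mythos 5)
The paper does not prove this theorem at all: it is quoted as a known result, with the hard content attributed to Mazur \cite{mazur1} for prime level and to Fricke, Kenku, Klein, Kubert, Ligozat, and Ogg for composite level, with a pointer to the tables in \cite{lozano0}. Your genus stratification, the rank-zero verification for the twelve genus-one levels, and the reduction of composite genus-$\geq 2$ levels to smaller levels via the degeneracy maps is an accurate account of how that classification is actually established, and you defer to exactly the same sources for the steps that cannot be reproduced here, so your proposal is correct and consistent with the paper's (purely citational) treatment.
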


	\section{Lemmas}\label{sec-lemmas}
	
	In this section we show a number of elementary preliminary lemmas that will be used in the following sections to prove the classification of isogeny-torsion graphs.
	
	\begin{lemma}\label{lem-number-of-groups} Let $p$ be a prime and let $E / \Q$ be an elliptic curve. Let $P \in E$ be a torsion point of order $p^m$ for some $m \geq 1$. Then, there are $p$ cyclic subgroups of $E[p^{m+1}]$ of order $p^{m + 1}$ containing $P$. Additionally, there are $p + 1$ subgroups of $E[p^{m+1}]$ of order $p$.
	\end{lemma}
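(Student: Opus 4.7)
The plan is to exploit the standard structure theorem that $E[p^{m+1}]$ is a free $\Z/p^{m+1}\Z$-module of rank $2$ and reduce both counts to a short linear-algebra exercise. First I would lift $P$ to a point $P'\in E[p^{m+1}]$ with $pP'=P$; such a lift exists because multiplication by $p$ maps $E[p^{m+1}]$ onto $E[p^m]$, and $P'$ automatically has order $p^{m+1}$ since $pP'=P$ has order $p^m$. Next I would pick any $Q\in E[p^{m+1}]$ whose image modulo $p$, together with the image of $P'$, spans $E[p]\cong\FF_p^2$; by Nakayama's lemma $\{P',Q\}$ is then a $\Z/p^{m+1}\Z$-basis of $E[p^{m+1}]$, and in these coordinates $P=(p,0)$.

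The first count is now a finite calculation. Any cyclic subgroup of order $p^{m+1}$ containing $P$ has the form $\langle R\rangle$ with $R$ of order $p^{m+1}$, and because $\langle R\rangle$ is cyclic its unique subgroup of order $p^m$ is $\langle pR\rangle$, so the containment $P\in\langle R\rangle$ is equivalent to $\langle pR\rangle=\langle P\rangle$. Writing $R=aP'+bQ$ and $pR\equiv(ap,bp)\pmod{p^{m+1}}$, the condition $pR\in\langle(p,0)\rangle$ forces $p^m\mid b$, while requiring that $pR$ actually generate $\langle P\rangle$ (equivalently, that $pR$ have order $p^m$) forces $p\nmid a$. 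Counting residues, there are $(p-1)p^m$ valid $a$'s and $p$ valid $b$'s, giving $(p-1)p^{m+1}$ generators $R$ in total; dividing by the $\varphi(p^{m+1})=(p-1)p^m$ generators of each cyclic subgroup yields exactly $p$ subgroups, as required.

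For the second assertion, every subgroup of $E[p^{m+1}]$ of order $p$ is killed by $p$ and therefore lies in $E[p]\cong\FF_p^2$, and subgroups of order $p$ of $\FF_p^2$ are in bijection with $1$-dimensional $\FF_p$-subspaces, i.e., with $\PP^1(\FF_p)$, which has $p+1$ elements.

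The only mild obstacle is the basis-extension step: one must know that any element of $E[p^{m+1}]$ of order $p^{m+1}$ can be completed to a free $\Z/p^{m+1}\Z$-basis. This is standard for modules over the local ring $\Z/p^{m+1}\Z$ (it reduces by Nakayama to completing a nonzero vector in $\FF_p^2$ to a basis and then lifting), and I would dispense with it in a single sentence rather than promote it to a separate lemma.
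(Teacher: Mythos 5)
Your proof is correct and follows essentially the same route as the paper: both choose a basis $\{P',Q\}$ of $E[p^{m+1}]$ with $[p]P'=P$ and reduce the first count to congruence conditions on the coordinates of a generator, and both dispose of the second count by noting that any subgroup of order $p$ lies in $E[p]\cong \Z/p\Z\times\Z/p\Z$. The only difference is cosmetic: the paper exhibits the $p$ subgroups $\langle P'+[kp^m]Q\rangle$ explicitly and checks they are distinct and exhaustive, whereas you count the $(p-1)p^{m+1}$ admissible generators and divide by $\varphi(p^{m+1})$.
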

	% Statement and proof revised by Alvaro 8/22/2019  
	\begin{proof}
		Let $P_k = [p^{m-k}]P_m$ with $P_m=P$ as in the statement. Let $P_{m+1}$ be a torsion point of order $p^{m+1}$ such that $[p]P_{m+1}=P_m$, and let $Q_{m+1}$ be another torsion point of order $p^{m+1}$ such that $\{P_{m+1},Q_{m+1}\}$ form a $\Z/p^{m+1}\Z$-basis of $E[p^{m+1}]$, and let $Q_m=[p]Q_{m+1}$.     Then, the subgroups $H_k = \langle P_{m+1}+[kp^{m}]Q_{m+1}\rangle \subset E[p^{m+1}]$, for $k=0,\ldots,p-1$, are of order $p^{m+1}$, and clearly contain the point $P_m$.
		
		First note that if $P_m = [p]([a]P_{m+1}+[b]Q_{m+1})=[a]P_m + [b]Q_m$, then $a\equiv 1 \bmod p^m$ and $b\equiv 0 \bmod p^m$. Thus if $a'\equiv a^{-1} \bmod p^{m+1}$, and $\frac{a'b}{p^m} \equiv k \bmod p$, then $[a']([a]P_{m+1}+[b]Q_{m+1}) \in H_k$. It follows that if $H$ is cyclic of order $p^{m+1}$ and $P_m\in H$, then $H=H_k$ for some $k\in \{0,\ldots,p-1\}$.
		
		Finally, suppose $H_k = H_j$, for some $0\leq k,j\leq p-1$. Then, there is some $h\geq 1$ such that $[h](P_{m+1}+[kp^m]Q_{m+1})= P_{m+1}+[jp^m]Q_{m+1}$. Since $\{P_{m+1},Q_{m+1}\}$ is a $\Z/p^{m+1}\Z$-basis, it follows that $h\equiv 1\bmod p^{m+1}$ and therefore $h\equiv j\bmod p$. This implies that $h=j$, as desired. Hence, we have shown that there are exactly $p$ cyclic subgroups of order $p^{m+1}$ containing $P$, namely $H_0,\ldots,H_{p-1}$. 
		
		The last statement follows from the fact that if $H\subseteq E[p^{m+1}]$ is of order $p$, then $H\subset E[p]\cong \Z/p\Z \times \Z/p\Z$, which has $p+1$ subgroups of order $p$.
	\end{proof}
	
	\begin{lemma}\label{lem-orbit-of-rational-points}   Let $E / \Q$ be an elliptic curve.
		\begin{enumerate}
			\item If $P,Q \in E(\Q)$ with $\tau(Q) =  P$ for some $\tau \in G_{\Q}=\Gal(\overline{\Q}/\Q)$, then $Q = P$. 
			
			\item A subgroup $H\subseteq E$ of order $2$ is $\Q$-rational if and only if $H\subseteq E(\Q)$. 
			
			\item Moreover, if $P \in E(\Q)[2]$ is a rational point of order 2 and $Q \in E[2]$ is a different point of order 2 (not necessarily rational), then $\sigma(Q) = Q$ or $P + Q$ for all $\sigma \in G_{\Q}$. 
		\end{enumerate}
	\end{lemma}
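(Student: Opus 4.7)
The proof of this three-part lemma should be quite short and elementary; I would handle the three items in order, each following from a simple observation about how $G_\Q$ acts on $E(\overline{\Q})$.

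For part (1), the plan is to exploit the defining property of $E(\Q)$: a point is rational precisely when it is fixed by every element of $G_\Q$. Since $Q\in E(\Q)$, we have $\tau(Q)=Q$ for every $\tau\in G_\Q$, in particular for the $\tau$ in the hypothesis. Comparing with $\tau(Q)=P$ gives $Q=P$ immediately. No obstacle here.

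For part (2), the forward direction requires a small argument while the reverse is trivial. If $H\subseteq E(\Q)$, then $H$ is pointwise fixed by $G_\Q$, so it is certainly $\Q$-rational as a subgroup. Conversely, suppose $H=\{O,P\}$ is $\Q$-rational of order $2$. Then for any $\sigma\in G_\Q$, we have $\sigma(P)\in H$, and since $\sigma$ is a group isomorphism $\sigma(P)\neq \sigma(O)=O$; hence $\sigma(P)=P$. So $P\in E(\Q)$ and thus $H\subseteq E(\Q)$. Again there is no substantive obstacle.

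For part (3), the idea is to enumerate $E[2]=\{O,P,Q,P+Q\}$ and use that $G_\Q$ acts by group automorphisms on $E[2]$. Because $\sigma(O)=O$ and $\sigma(P)=P$ (the latter since $P\in E(\Q)[2]$), the action of $\sigma$ permutes the complement $\{Q,P+Q\}$. Therefore $\sigma(Q)\in\{Q,P+Q\}$, which is the claim. The only thing worth stating carefully is that $\sigma$ is additive, so $\sigma(P+Q)=\sigma(P)+\sigma(Q)=P+\sigma(Q)$, confirming that $\{Q,P+Q\}$ is a stable subset.

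Overall there is no real obstacle: each item is a direct consequence of the action of $G_\Q$ commuting with the group law and fixing the rational points. The proof should fit in a few lines.
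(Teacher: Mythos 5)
Your proof is correct and follows essentially the same elementary argument as the paper's: in part (1) you apply rationality to $Q$ where the paper applies it to $P$ (either works under the stated hypothesis), and in part (3) you argue directly that $\sigma$ permutes $\{Q,P+Q\}$ by bijectivity, whereas the paper excludes $\sigma(Q)=P$ by citing part (1) — a cosmetic difference only.
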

	% Statement and proof revised by Alvaro 8/22/2019   
	\begin{proof}
		Let $E$, $P$, $Q$, and $H$ be as in the statement. If $P\in E(\Q)$ and $\tau \in G_\Q$ with $\tau(Q)=P$, then $P=\tau^{-1}(P)=Q$, because $\tau^{-1}$ fixes $P\in E(\Q)$. Thus, $P=Q$ as claimed in part (1). For part (2), notice that if $H=\langle P \rangle = \{\mathcal{O},P\}$ is $\Q$-rational, then $\sigma(P)=P$ for all $\sigma \in G_\Q$ because $P$ is the only point of order $2$ in $H$. Thus, $P\in E(\Q)$, and $H\subset E(\Q)$.  For (3), suppose $P \in E(\Q)[2]$ of order $2$, and $Q$ is a different point of order $2$. Then, $E[2] = \langle P, Q\rangle$. Now let $\sigma \in G$ be arbitrary. Then, $\sigma(Q)$ is another point of order $2$, and therefore $\sigma(Q)\in \{P,Q,P+Q\}$.  However, $\sigma(Q)\neq P$, by part (1). Thus, $\sigma(Q_2)=Q_2$ or $P_2+Q_2$, as claimed.
	\end{proof}

	\begin{lemma}\label{lem-subgps-of-rat-groups} Let $E / \Q$ be an elliptic curve and suppose $P \in E$ generates a $\Q$-rational subgroup. If $Q\in \langle P\rangle$, then $\langle Q \rangle$ is also $\Q$-rational. 
	\end{lemma}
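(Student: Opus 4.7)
The plan is to unpack the definition of $\Q$-rational subgroup (namely, $G_\Q$-stable) and exploit the fact that any automorphism of a cyclic group acts by multiplication by an integer. This makes every subgroup of a cyclic Galois-stable subgroup automatically Galois-stable.

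First, I would fix $\sigma \in G_\Q$ and note that since $\langle P\rangle$ is $\Q$-rational, $\sigma(P) \in \langle P\rangle$, so there exists an integer $m_\sigma$ with $\sigma(P) = [m_\sigma]P$. Next, I would write an arbitrary element $Q \in \langle P\rangle$ as $Q = [k]P$ for some integer $k$. Using $\Z$-linearity of $\sigma$ on $E(\overline{\Q})$, I get
\[
\sigma(Q) = \sigma([k]P) = [k]\sigma(P) = [k m_\sigma]P = [m_\sigma]([k]P) = [m_\sigma]Q \in \langle Q\rangle.
\]
This shows $\sigma(\langle Q\rangle) \subseteq \langle Q\rangle$, and since $\langle Q\rangle$ is finite and $\sigma$ is injective, this inclusion is an equality. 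As $\sigma$ was arbitrary, $\langle Q\rangle$ is $G_\Q$-stable, i.e., $\Q$-rational.

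There is really no serious obstacle here; the only subtle point is remembering that $\Q$-rationality of a subgroup means Galois-stability (not that every point is individually rational), and invoking the fact that an endomorphism of a cyclic group is multiplication by an integer so all subgroups are preserved.
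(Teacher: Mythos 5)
Your proof is correct and is essentially identical to the paper's: both write $\sigma(P)=[m]P$, write $Q=[k]P$, and commute the multiplications to get $\sigma(Q)=[m]Q\in\langle Q\rangle$. The extra remark about the inclusion being an equality by finiteness is fine but not needed for the notion of $\Q$-rationality used here.
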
 
	\begin{proof}
		Let $Q\in \langle P\rangle$, where $\langle P\rangle$ is $\Q$-rational. Then, $Q=[n]P$ for some $n \in \Z$. Let  $\sigma\in G_\Q$ be arbitrary. Since $\langle P\rangle$ is $\Q$-rational, there is an integer $m \in \Z$ such that $\sigma(P)=[m]P$. Then, $\sigma(Q)=\sigma([n]P)=[n]\sigma(P)=[n]([m]P)=[m]([n]P)=[m]Q\in \langle Q\rangle$, and therefore $\langle Q \rangle$ is $\Q$-rational as well, as claimed.
	\end{proof} 
	
	\begin{lemma}\label{orbit-of-supergroups-of-rational-groups}
		Let $M, N \geq 1$ such that $M$ divides $N$. Let $E/\Q$ be an elliptic curve such that $P_{M}, P_{N} \in E$ are elements of order $M$ and $N$ respectively and $P_{M} \in \langle P_{N}\rangle $. Suppose $\langle P_{M}\rangle$ is $\Q$-rational, then for each $\sigma \in G_{\Q}$ there is $c=c(\sigma)\in\Z/N\Z$ such that $\sigma(P_{N}) - [c]P_{N} \in E[\frac{N}{M}]$. If $\langle P_{N} \rangle$ is $\Q$-rational, then $\sigma(P_{N}) - [c]P_{N} \in \langle P_{N} \rangle \cap E[\frac{N}{M}] = \langle [M]P_{N} \rangle$. If $P_{M}$ is defined over $\Q$, then $c = 1$ for all $\sigma \in G_{\Q}$.
	\end{lemma}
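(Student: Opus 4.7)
The plan is to set $L = N/M$ and to exploit the observation that $\langle P_M\rangle = \langle [L]P_N\rangle$. Indeed, since $P_M\in\langle P_N\rangle$ has order $M$, we have $P_M = [aL]P_N$ for some $a$ with $\gcd(a,M)=1$, and then $a$ is invertible modulo $M$ so $[L]P_N = [a^{-1}]P_M$ generates the same cyclic subgroup. The hypothesis that $\langle P_M\rangle$ is $\Q$-rational then translates into the statement that, for each $\sigma\in G_\Q$, there exists $c=c(\sigma)\in\Z/M\Z$ with $\sigma([L]P_N) = [c]([L]P_N)$. I would lift $c$ arbitrarily to an element of $\Z/N\Z$; the lift is harmless, since two lifts differ by a multiple of $M$ and $[M]P_N$ already lies in $E[L]$.

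Given this, the first assertion follows from a one-line calculation: applying $[L]$ and using that $\sigma$ is a group homomorphism gives
\[
[L]\bigl(\sigma(P_N)-[c]P_N\bigr) \;=\; \sigma([L]P_N) - [cL]P_N \;=\; [cL]P_N - [cL]P_N \;=\; \mathcal{O},
\]
so $\sigma(P_N)-[c]P_N\in E[L] = E[N/M]$, as required. For the second assertion, if $\langle P_N\rangle$ is $\Q$-rational then $\sigma(P_N)\in\langle P_N\rangle$ and hence $\sigma(P_N)-[c]P_N\in\langle P_N\rangle$ as well; combined with the first part, this difference lies in $\langle P_N\rangle\cap E[L]$. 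I would then identify this intersection explicitly: a point $[j]P_N$ has order dividing $L=N/M$ if and only if $M\mid j$, whence $\langle P_N\rangle\cap E[L] = \langle [M]P_N\rangle$.

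For the last assertion, if $P_M$ itself is fixed by $G_\Q$ then $\sigma(P_M)=P_M$ for every $\sigma$; writing $P_M=[a]([L]P_N)$ with $a$ invertible modulo $M$ forces $\sigma([L]P_N) = [L]P_N$, so $c\equiv 1\pmod M$. Choosing the lift $c=1$ in $\Z/N\Z$ gives the desired conclusion. There is no genuine obstacle here; the only point that needs care is bookkeeping the unit $a$ relating $P_M$ to $[L]P_N$, and observing that the hypothesis "$\langle P_M\rangle$ is $\Q$-rational" only determines $c$ modulo $M$, while the ambiguity in lifting to $\Z/N\Z$ is absorbed by $E[L]$.
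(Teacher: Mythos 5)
Your proof is correct and follows essentially the same route as the paper's: both extract from the $\Q$-rationality of $\langle P_M\rangle=\langle [N/M]P_N\rangle$ a scalar $c$ modulo $M$ and conclude that $\sigma(P_N)-[c]P_N$ is killed by $N/M$. The only cosmetic difference is that the paper expands $\sigma(P_N)$ in a basis $\{P_N,Q_N\}$ of $E[N]$ and matches coefficients, whereas you apply $[N/M]$ directly and avoid choosing a basis; your handling of the unit $a$ and of the ambiguity in lifting $c$ from $\Z/M\Z$ to $\Z/N\Z$ is careful and sound.
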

	
	\begin{proof}
		For the first statement, let $E/\Q$ be an elliptic curve, $N \geq 1$, and $P \in E$ be a point of order $N$. Suppose $Q_{N} \in E$ such that $\{P_{N}, Q_{N} \}$ is a $\Z/N\Z$-basis of $E[N]$. Then, given an arbitrary $\sigma \in G_{\Q}$, we have that $\sigma(P_{N}) \in E[N]$, so $\sigma(P_{N}) = [a]P_{N} + [b]Q_{N}$ for some $a, b \in \Z / N \Z$. Suppose $M$ divides $N$ and that $P_{M} \in \langle P_{N} \rangle$ is an element of order $M$ that generates a $\Q$-rational group. After reassigning labels for the generators of $\langle P_{M} \rangle,$ we can assume without loss of generality that $P_{M} = [\frac{N}{M}]P_{N}.$ Since $\langle P_M\rangle$ is $\Q$-rational, it follows that $\sigma(P_{M}) = [c]P_{M}$ for some $c \in \Z / M \Z$. Thus, 
		$$[c]P_{M} = \sigma(P_{M}) = \sigma\left(\left[\frac{N}{M}\right]P_{N}\right) = \left[\frac{N}{M}\right]\sigma(P_{N}) = \left[\frac{N}{M}\right]([a]P_{N} + [b]Q_{N}) = [a]P_{M} + [b]\left[\frac{N}{M}\right]Q_{N}$$ 
		for some $a, b \in \Z / N \Z$, as above. For ease of notation, let us denote $[\frac{N}{M}]Q_{N} = Q_{M}$ (note that $E[M] = \langle P_{M}, Q_{M} \rangle$). Thus, $[c]P_{M} = [a]P_{M} + [b]Q_{M}$ and therefore $[a - c]P_{M} + [b]Q_{M} = \mathcal{O}$. Thus, $a-c\equiv b\equiv 0\bmod M$. Let us denote $a = c + c'M$ and $b = b'M$ for some $c', b' \in \Z$. Substituting above yields
		$$\sigma(P_{N}) = [c + c'M]P_{N} + [b'M]Q_{N} = [c]P_N + [c'][M]P_N + [b'][M]Q_N.$$
		Hence, $\sigma(P_N) - [c]P_N \in E[N/M]$ as claimed, because $[M]E[N] = E[N/M]$.
		
		The last two statements are clear because if $\langle P_{N} \rangle$ is $\Q$-rational, $\sigma(P_{N}) \in \langle P_{N} \rangle$ for all $\sigma \in G_{\Q}$ and if $P_{M}$ is defined over $\Q$, we must have $\sigma(P_{M}) = P_{M}$ for all $\sigma \in G_{\Q}$.
	\end{proof}
	
%	\begin{lemma} \label
%		Let $E / \Q$ be an elliptic curve. Let $H$ be a finite $\Q$-rational subgroup of $E$. Then, there is a unique rational elliptic curve $E' / \Q$ and a unique non-constant rational isogeny $\phi \colon E \longrightarrow E'$ with kernel $H$.
%	\end{lemma}
%	
%%		See \cite{Silverman}, Chapter III, Section 4, Proposition 4.12.
%	\end{proof}
	
%	We will use the lemma above  extensively in the paper. All our elliptic curves will be defined over $\Q$ and all our isogenies will be induced by such finite $\Q$-rational subgroups.
	
	\begin{lemma}\label{lem-necessity-for-point-rationality} Let $E / \Q$ be an elliptic curve and let $Q \in E$ such that $\langle Q \rangle$ is a $\Q$-rational subgroup of $E$.
		\begin{enumerate}
			
			\item Let $\phi \colon E \to E'$ be an isogeny with kernel a finite, cyclic, $\Q$-rational group, $H$. Then, for an arbitrary $P \in E$, the point $\phi(P)\in E'$ is defined over $\Q$ if and only if $\sigma(P) - P \in H$ for all $\sigma \in G_{\Q}$. 
			
			\item Let $E[2] = \langle P, Q \rangle$. If $\phi\colon E\to E'$ is a $\Q$-rational isogeny with cyclic kernel, and $P \in \ker(\phi)$, then $\phi(Q)\in E'(\Q)[2]$ is rational of order $2$. 
			
			\item Finally, if $\phi\colon E\to E'$ is an isogeny defined over $\Q$, then $\phi(E(\Q))\subseteq E'(\Q)$, i.e., the isogeny $\phi$ sends rational points of $E$ to rational points of $E'$.
		\end{enumerate}
	\end{lemma}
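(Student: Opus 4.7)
The common engine for all three parts is Galois-equivariance: since $\phi\colon E\to E'$ is defined over $\Q$, we have $\sigma(\phi(R))=\phi(\sigma(R))$ for every $R\in E(\overline{\Q})$ and every $\sigma\in G_\Q$. I would open the proof by recording this once and then deducing the three statements in the order (1), (3), (2), since (3) is an immediate corollary of (1) and (2) uses both (1) and the earlier Lemma \ref{lem-orbit-of-rational-points}.

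For part (1), I would argue that $\phi(P)\in E'(\Q)$ if and only if $\sigma(\phi(P))=\phi(P)$ for all $\sigma\in G_\Q$. Using Galois-equivariance, this is equivalent to $\phi(\sigma(P))=\phi(P)$, i.e.\ $\phi(\sigma(P)-P)=\mathcal{O}$, which is exactly the condition $\sigma(P)-P\in \ker(\phi)=H$. No further input is needed here; the hypothesis that $H$ is cyclic and $\Q$-rational only serves to make $\phi$ well-defined and $\Q$-rational in the ambient setup.

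For part (3), I would just take $P\in E(\Q)$, so that $\sigma(P)-P=\mathcal{O}\in H$ trivially for every $\sigma\in G_\Q$; part (1) then yields $\phi(P)\in E'(\Q)$.

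For part (2), first observe that $\langle P\rangle$ has order $2$ and sits inside the $\Q$-rational cyclic group $\ker(\phi)$, so by Lemma \ref{lem-subgps-of-rat-groups} the subgroup $\langle P\rangle$ is $\Q$-rational, and then by Lemma \ref{lem-orbit-of-rational-points}(2) we get $P\in E(\Q)$. Now apply Lemma \ref{lem-orbit-of-rational-points}(3) with this rational $P$ and the other order-$2$ point $Q$: for every $\sigma\in G_\Q$, $\sigma(Q)\in\{Q,P+Q\}$, hence $\sigma(Q)-Q\in\{\mathcal{O},P\}\subseteq \ker(\phi)$. By part (1) this forces $\phi(Q)\in E'(\Q)$. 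To check that $\phi(Q)$ has order $2$, note that $\ker(\phi)$ is cyclic while $E[2]\cong \Z/2\Z\times \Z/2\Z$ is not, so $Q\notin \ker(\phi)$, giving $\phi(Q)\neq\mathcal{O}$; and $[2]\phi(Q)=\phi([2]Q)=\phi(\mathcal{O})=\mathcal{O}$. There is no real obstacle in any of this; the only point that benefits from care is invoking the preceding lemmas to upgrade ``$\langle P\rangle$ is contained in a $\Q$-rational cyclic group'' to ``$P$ is $\Q$-rational,'' which is why I would present (2) last.
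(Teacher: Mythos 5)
Your proposal is correct and follows essentially the same route as the paper's proof: part (1) via Galois-equivariance of $\phi$ and the identification of $\ker(\phi)$ with $H$, part (2) by upgrading $\langle P\rangle\subseteq\ker(\phi)$ to $P\in E(\Q)$ via Lemmas \ref{lem-subgps-of-rat-groups} and \ref{lem-orbit-of-rational-points} and then applying part (1), and part (3) as an immediate consequence of $\phi$ being defined over $\Q$. The reordering of the parts is purely cosmetic.
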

	
	\begin{proof}
		
		Let $E / \Q$, $H$, and $\phi$ be as in (1). Then, $P \in E$ is such that $\phi(P)$ is defined over $\Q$ if and only if, for all $\sigma \in G_{\Q}$, we have $\phi(P) = \sigma(\phi(P)) = \phi(\sigma(P))$. Equivalently, $\phi(\sigma(P) - P) = \mathcal{O}$, and therefore $\sigma(P) - P \in H$ for all $\sigma \in G_{\Q}$. This shows (1).
		
		With notation as in (2), if $P \in \ker(\phi)$, then $\langle P\rangle$ is $\Q$-rational by Lemma \ref{lem-subgps-of-rat-groups}, and since $P\in E[2]$, we must have $P$ is defined over the rationals by Lemma \ref{lem-orbit-of-rational-points}, part (2). Now, the point $\phi(Q)$ is of order dividing $2$ (because $Q\in E[2]$), but $\phi(Q)\neq \mathcal{O}$ because $Q\not\in \ker(\phi)$ as $P\neq Q$ and the kernel is cyclic by assumption. Thus, $\phi(Q)$ has exact order $2$, and $\sigma(Q) - Q \in \langle P \rangle \subseteq \ker(\phi)$ by Lemma \ref{lem-orbit-of-rational-points} (3). Hence, $\phi(Q)$ is defined over $\Q$ by the first part of Lemma \ref{lem-necessity-for-point-rationality}.
		
		The last statement, part (3), follows immediately from the fact that $\phi$ is defined over $\Q$.\end{proof}
	
	\begin{lemma} \label{lem-2torspt-all-have-2torspt} Let $E/\Q$ be an elliptic curve with a point of order $2$ defined over $\Q$. Then, every elliptic curve over $\Q$ that is $\Q$-isogenous (with cyclic kernel) to $E$ also has a point of order $2$ defined over $\Q$.
	\end{lemma}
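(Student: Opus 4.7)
The plan is to prove the statement by applying the direct isogeny $\phi\colon E\to E'$ and splitting into cases depending on whether the rational $2$-torsion point $P\in E(\Q)[2]$ lies in $\ker(\phi)$ or not. In each case we will produce, explicitly, a rational $2$-torsion point on $E'$ using parts (2) and (3) of Lemma \ref{lem-necessity-for-point-rationality}.

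More precisely, suppose $\phi\colon E\to E'$ is a $\Q$-rational isogeny with cyclic kernel $H$, and let $P\in E(\Q)[2]$ be the given rational point of order $2$. In the first case, if $P\notin H$, then $\phi(P)\in E'$ is nonzero and satisfies $[2]\phi(P)=\phi([2]P)=\mathcal O$, so $\phi(P)$ has exact order $2$; it is defined over $\Q$ by part (3) of Lemma \ref{lem-necessity-for-point-rationality}. In the second case, if $P\in H$, choose any $Q\in E[2]\setminus\langle P\rangle$, so that $E[2]=\langle P,Q\rangle$. Since $H$ is cyclic and contains the order-$2$ point $P$, the group $H$ cannot contain the full $2$-torsion $E[2]\cong(\Z/2\Z)^2$; in particular $Q\notin H$. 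Part (2) of Lemma \ref{lem-necessity-for-point-rationality} now applies directly and yields a rational point $\phi(Q)\in E'(\Q)[2]$ of order $2$.

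Finally, the general case of an elliptic curve $E'$ merely lying in the same $\Q$-isogeny class as $E$ follows by induction on the length of a chain of prime-degree cyclic $\Q$-isogenies connecting $E$ to $E'$, since each step preserves the existence of a rational $2$-torsion point by the argument above. The only step one might expect to be subtle is the case $P\in H$, where one must ensure that the ``other'' point of order $2$ does not also lie in $\ker(\phi)$; but this is immediate from the cyclicity of $H$ and the structure of $E[2]$, so the proof is entirely routine given the lemmas already established.
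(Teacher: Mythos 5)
Your proof is correct and follows essentially the same route as the paper: split on whether $P\in\ker(\phi)$, then apply parts (2) and (3) of Lemma \ref{lem-necessity-for-point-rationality} to produce $\phi(Q)$ or $\phi(P)$ as the rational $2$-torsion point on $E'$. The extra remarks (that $Q\notin H$ by cyclicity, and the reduction to a single isogeny) are fine but already implicit in the paper's argument.
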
 
	
	\begin{proof}
		Suppose that $E/\Q$ has a point of order $2$ defined over $\Q$. Denote it $P_{2}$ and suppose $E[2] = \langle P_{2}, Q_{2} \rangle$ for some $Q_{2} \in E[2]$. Let $\phi\colon E\to E'$ be a $\Q$-rational isogeny with a finite, cyclic, $\Q$-rational kernel. If $P_2 \in \ker(\phi)$, then by the second statement of Lemma \ref{lem-necessity-for-point-rationality}, we have that $\phi(Q_2)$ is a point of order $2$ defined over $\Q$ in $E'$. Otherwise, if $P_2\not\in \ker(\phi)$, then by the third statement of Lemma \ref{lem-necessity-for-point-rationality}, we have that $\phi(P_2)$ is point of order $2$ defined over $\Q$ in $E'$. Thus, $E'$ has a point of order $2$ defined over $\Q$ as well.  
	\end{proof}

	\begin{lemma}\label{lem-necessity-for-subgroup-rationality} Let $E / \Q$ be an elliptic curve. Let $P \in E$ such that $\langle P \rangle$ contains a finite, $\Q$-rational, cyclic group, $H$. Let $\phi \colon E \to E'$ be an isogeny with kernel $H$. If $\phi(P)$ is defined over $\Q$, then $\langle P \rangle$ is $\Q$-rational.
	\end{lemma}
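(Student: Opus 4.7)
The plan is to deduce the result directly from part (1) of Lemma \ref{lem-necessity-for-point-rationality}, which was designed exactly for this kind of manipulation. Let $N$ be the order of $P$, and let $M$ be the order of $H$, so that $M\mid N$ and $H=\langle [N/M]P\rangle \subseteq \langle P\rangle$. The goal is to show that $\sigma(P)\in\langle P\rangle$ for every $\sigma\in G_\Q$.

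Since $\phi(P)\in E'(\Q)$ by hypothesis, Lemma \ref{lem-necessity-for-point-rationality}(1) applied to the isogeny $\phi\colon E\to E'$ (whose kernel $H$ is finite, cyclic, and $\Q$-rational) tells us that
\[
\sigma(P)-P \in H \quad \text{for all } \sigma\in G_\Q.
\]
Therefore, for each $\sigma\in G_\Q$ there is some $h_\sigma\in H$ with $\sigma(P)=P+h_\sigma$. But $H\subseteq \langle P\rangle$ by assumption, so $h_\sigma\in \langle P\rangle$, which yields $\sigma(P)\in \langle P\rangle$. Since this holds for every $\sigma\in G_\Q$, the subgroup $\langle P\rangle$ is stable under $G_\Q$, i.e., $\Q$-rational, as desired.

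There is no real obstacle here; the lemma is essentially a converse to the easy direction used in Lemma \ref{lem-necessity-for-point-rationality}(1), and the key observation is just that the containment $H\subseteq \langle P\rangle$ upgrades the conclusion ``$\sigma(P)-P\in H$'' to ``$\sigma(P)\in \langle P\rangle$.'' The only thing one might want to emphasize is why the hypothesis ``$H\subseteq \langle P\rangle$'' is necessary: without it, $\sigma(P)-P$ could lie in the kernel without $\sigma(P)$ lying in $\langle P\rangle$.
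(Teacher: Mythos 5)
Your proposal is correct and follows the same route as the paper: both apply Lemma \ref{lem-necessity-for-point-rationality}(1) to get $\sigma(P)-P\in H$ for all $\sigma\in G_\Q$ and then use the containment $H\subseteq\langle P\rangle$ to conclude that $\langle P\rangle$ is Galois-stable. No gaps.
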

	
	\begin{proof}
		Let $E$, $P$, and $H$ be as in the statement of the lemma. Let $\phi \colon E \to E'$ be an isogeny with kernel $H$. Then, by the first statement of Lemma \ref{lem-necessity-for-point-rationality}, the point $\phi(P)\in E'$ is defined over $\Q$ if and only if we have $\sigma(P) - P \in H$, for all $\sigma \in G_{\Q}$. As $H \subseteq \langle P \rangle$, we have $\sigma(P) - P \in \langle P \rangle$ for all $\sigma \in G_{\Q}$ and thus, $\langle P \rangle$ is a $\Q$-rational group.
	\end{proof}
	
	\begin{lemma} \label{p^2 isogeny}
		Let $E / \Q$ be an elliptic curve and let $p = 2, 3,$ or $5$. Suppose that $E$ has a point of order $p$ defined over $\Q$. Further, suppose that the image of $\rho_{E,p}$ is a split Cartan subgroup of $\GL(2,\mathbb{F}_p)$. Then, $E$ is isogenous to an elliptic curve, $E'$ over $\Q$ which has a point of order $p$ defined over $\Q$, which is contained in a cyclic $\Q$-rational subgroup of $E'$ of order $p^2$.
	\end{lemma}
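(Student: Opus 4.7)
The plan is to build $E'$ as an explicit $p$-isogenous quotient of $E$, by quotienting by the ``other'' Galois-stable line in $E[p]$, and then checking that the image of any $p^2$-lift of $P$ already generates a $\Q$-rational cyclic subgroup of order $p^2$. The split-Cartan hypothesis tells us that, with respect to a suitable basis, $\rho_{E,p}(G_\Q)$ is diagonal, so there are two $G_\Q$-stable lines in $E[p]$. Since $\langle P\rangle$ is itself $G_\Q$-stable (as $P\in E(\Q)$), it must be one of these two eigenlines, and hence there exists $Q\in E[p]$ with $\{P,Q\}$ a $\Z/p\Z$-basis of $E[p]$ such that $\langle Q\rangle$ is $\Q$-rational. (In the degenerate case when the image is scalar, every complement to $\langle P\rangle$ is $\Q$-rational, so this $Q$ still exists.)

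With this $Q$ in hand, I would set $E':=E/\langle Q\rangle$ and let $\phi\colon E\to E'$ be the quotient isogeny, which is $\Q$-rational with cyclic kernel of order $p$. Lemma~\ref{lem-necessity-for-point-rationality}(3) immediately gives $\phi(P)\in E'(\Q)$, and since $P\notin\langle Q\rangle=\ker(\phi)$, the point $\phi(P)$ still has exact order $p$. Next I would lift $P$ to order $p^2$: pick any $R\in E[p^2]$ with $[p]R=P$ (available since $E[p^2]\cong(\Z/p^2\Z)^2$), and set $R':=\phi(R)$. Then $[p]R'=\phi(P)\ne O$ and $[p^2]R'=\phi([p]P)=O$, so $R'$ has exact order $p^2$ and $\phi(P)=[p]R'\in\langle R'\rangle$.

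The main verification is that $\langle R'\rangle$ is $\Q$-rational. For any $\sigma\in G_\Q$, we have $[p]\sigma(R)=\sigma(P)=P=[p]R$, so $\sigma(R)-R\in E[p]=\langle P,Q\rangle$, and we may write
\[
\sigma(R)=R+[\alpha_\sigma]P+[\beta_\sigma]Q
\]
for some $\alpha_\sigma,\beta_\sigma\in\Z/p\Z$. Applying $\phi$ and using $\phi(Q)=O$ together with $\phi(P)=[p]R'$ yields
\[
\sigma(R')=\phi(R)+[\alpha_\sigma]\phi(P)+[\beta_\sigma]\phi(Q)=R'+[\alpha_\sigma][p]R'=[1+p\alpha_\sigma]R'\in\langle R'\rangle.
\]
Therefore $\langle R'\rangle$ is a $\Q$-rational cyclic subgroup of $E'$ of order $p^2$ containing the rational point $\phi(P)$ of order $p$, which is exactly the conclusion.

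The only real obstacle is the initial step: identifying the correct complement $\langle Q\rangle$ to quotient by, which is precisely what the split-Cartan hypothesis is designed to guarantee. Once the correct $Q$ is chosen, the cancellation $\phi(Q)=O$ makes the rationality of $\langle R'\rangle$ into a one-line calculation, so there is no further subtlety to address.
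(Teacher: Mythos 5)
Your proposal is correct and follows essentially the same route as the paper's proof: choose the complementary eigenline $\langle Q\rangle$ of the split Cartan, quotient by it, lift $P$ to a point of order $p^2$, and observe that the $Q$-component of the Galois error term dies under $\phi$, so the image generates a $\Q$-rational cyclic group of order $p^2$. The only cosmetic difference is that you verify $\sigma(R)-R\in E[p]$ by the direct computation $[p]\sigma(R)=\sigma(P)=P$, where the paper cites its Lemma \ref{orbit-of-supergroups-of-rational-groups}; the content is identical.
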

	
	\begin{proof}
		Let $E / \Q$ be an elliptic curve, let $p$ be a prime as above, and let $P\in E[p]$ be a point of order $p$ defined over $\Q$. Choose a basis $\{P,Q\}$ of $E[p]$ such that the image of $\rho_{E,p}$ is a split Cartan subgroup $\left\{\left(\begin{array}{cc}
		1 & 0 \\
		0 & \ast
		\end{array} \right) \right\} \subseteq \GL(2,\mathbb{F}_p)$. Thus, $\langle Q \rangle$ is a $\Q$-rational subgroup of order $p$ that intersects $\langle P \rangle$ trivially. Let $\phi \colon E \to E'$ be the isogeny with kernel $\langle Q \rangle$. Then, $\phi(P)$ is a point of order $p$ because $P \in E[p]$ and $P \not \in \langle Q \rangle$. By the third statement of Lemma  \ref{lem-necessity-for-point-rationality}, $\phi(P)$ is defined over $\Q$. Let $P' \in E[p^2]$ be an element such that $[p]P' = P$. Letting $N = p^2$ and $M = p$ in Lemma  \ref{orbit-of-supergroups-of-rational-groups}, we have $\sigma(P') - P' \in E[p] = \langle P, Q \rangle$ for all $\sigma \in G_{\Q}$. Thus, for an arbitrary $\sigma \in G_{\Q}$, we have $\sigma(\phi(P')) = \phi(\sigma(P')) = \phi(P' + [a]P + [b]Q)$ for some $a, b \in \Z / p \Z$. But as $Q$ is an element of the kernel of $\phi$, we have $\phi(\sigma(P')) = \phi(P' + [a]P) = \phi(P' + [ap]P') = \phi([1 + ap]P') = [1 + ap]\phi(P')$. Thus, for all $\sigma \in G_{\Q}$, we have $\sigma(\phi(P')) \in \langle \phi(P') \rangle$. Hence, $\langle \phi(P')\rangle$ is a cyclic $\Q$-rational subgroup of $E'$ of order $p^2$ that contains $\phi(P)$, as desired.\end{proof}
	
	\begin{remark} \label{p^2 remark} When necessary, we will use Lemma \ref{p^2 isogeny} above to ``reorient'' our isogeny-torsion graph to use the most convenient elliptic curve as our base point for classification.
		
		In other words, if every torsion subgroup that appears in the isogeny-torsion graph is cyclic, then we will start building the isogeny-torsion graph using the elliptic curve whose torsion subgroup is the greatest in order among the torsion subgroups of elements of the isogeny class. If there are two or more such elements in the isogeny class, then we will build the isogeny-torsion graph using an element whose torsion subgroup is greatest in order among the torsion subgroups of the elements of the isogeny class \textit{and} with the greatest, finite, $\Q$-rational subgroup by order among the elements of the isogeny class. Note that then each isogeny-torsion graph with exclusively cyclic rational torsion has one or two ideal base points.
		
		If the isogeny class contains an element with bicyclic rational torsion subgroup, then we will start building our graph using the elliptic curve with the largest bicyclic rational torsion subgroup. If there are two such elements in the isogeny class, then we will build the isogeny-torsion graph using an element whose torsion subgroup is greatest in order among the bicyclic torsion subgroups of the elements of the isogeny class \textit{and} with the largest, finite, $\Q$-rational subgroup by order among the elements of the isogeny class with bicyclic torsion. Note that then each isogeny-torsion graph that contains an elliptic curve with bicyclic rational torsion contains one or two ideal base points.
		
	\end{remark}
	
	\begin{lemma} \label{lem-determinants}
		Let $E / \Q$  be an elliptic curve and fix $M \geq 3.$ Let $E[M] = \langle P, Q \rangle$ and suppose $Q$ is defined over $\Q$. Then, there exists some $\sigma \in G_{\Q}$ such that $\sigma(P) = [a]P + [b]Q$ for some $a,b\in\Z/M\Z$ with  $a\not\equiv 1\bmod M$.
	\end{lemma}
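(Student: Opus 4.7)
The plan is to exploit the fact that the determinant of the mod-$M$ Galois representation $\rho_{E,M}\colon G_\Q \to \GL(E[M])$ coincides with the mod-$M$ cyclotomic character, so nontriviality of $(\Z/M\Z)^\times$ for $M \geq 3$ will force the required inequality.

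First I would write down the matrix of $\sigma$ in the chosen basis $\{P,Q\}$ for an arbitrary $\sigma \in G_\Q$. Since $Q \in E(\Q)$, we have $\sigma(Q) = Q$, so if $\sigma(P) = [a]P + [b]Q$ (with $a,b \in \Z/M\Z$ depending on $\sigma$), then the matrix representing $\sigma$ has the form
\[
\rho_{E,M}(\sigma) = \begin{pmatrix} a & 0 \\ b & 1 \end{pmatrix},
\]
whose determinant is $a$.

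Next I would invoke the Weil pairing: the determinant of $\rho_{E,M}$ equals the mod-$M$ cyclotomic character $\chi_M\colon G_\Q \to (\Z/M\Z)^\times$, so $a \equiv \chi_M(\sigma) \bmod M$. Because the cyclotomic extension $\Q(\zeta_M)/\Q$ has Galois group $(\Z/M\Z)^\times$, the character $\chi_M$ is surjective. For $M \geq 3$ the group $(\Z/M\Z)^\times$ is nontrivial (its order is $\varphi(M) \geq 2$), so there exists $\sigma \in G_\Q$ with $\chi_M(\sigma) \not\equiv 1 \bmod M$. For this $\sigma$ we then have $a \not\equiv 1 \bmod M$, as required.

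There is no real obstacle here; the statement is essentially a formal consequence of the surjectivity of the cyclotomic character combined with $Q$ being rational. The only minor point to be careful about is the matrix convention (which column records the image of $P$ versus $Q$), but either convention produces a matrix whose determinant is precisely the coefficient $a$ appearing in $\sigma(P) = [a]P + [b]Q$, so the argument is unaffected.
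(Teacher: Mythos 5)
Your proof is correct and follows essentially the same route as the paper: both use the rationality of $Q$ to pin down the shape of $\rho_{E,M}(\sigma)$, observe that its determinant is the coefficient $a$, and then invoke the surjectivity of $\det\rho_{E,M}$ onto $(\Z/M\Z)^\times$ (via the Weil pairing), which is nontrivial for $M\geq 3$. The only difference is cosmetic: the paper argues by contradiction while you argue directly via the cyclotomic character.
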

	
	\begin{proof}
		Let  $E / \Q$ be an elliptic curve, and fix $M \geq 3$. Let $E[M] = \langle P, Q \rangle$ and suppose $Q$ is defined over $\Q$. Let $\sigma \in G_{\Q}$ be arbitrary. Then, $\sigma(P) = [a]P + [b]Q$ for some $a, b \in \Z / M \Z.$ If $a = 1$ for all $\sigma \in G_{\Q}$, then the image of $\rho_{E,M}$ in $\GL(2,\Z/M\Z)$ is of the form $\left\{\left(\begin{array}{cc} 1 & 0 \\ \ast & 1 \end{array} \right) \right\}$ and thus, the determinant of $\rho_{E,M}$ is constant equal to $1$. This contradicts the fact that $\det(\rho_{E,M})$ is surjective onto $(\Z/M\Z)^\times$, by the properties of the Weil pairing.
	\end{proof}
	
	\begin{lemma}\label{Maximality-Of-Rational-2-Power-Groups} Let $E / \Q$ be an elliptic curve and let $P$ be an element of $E$ of order $2^{M}$ with $M \geq 1$. Suppose $P$ generates a $\Q$-rational group and the two cyclic groups of order $2^{M+1}$ that contain $P$ are not $\Q$-rational. Let $\phi \colon E \to E'$ be an isogeny with kernel $\langle P \rangle$. Then, $E'(\Q)_{\text{tors}}$ is cyclic.
	\end{lemma}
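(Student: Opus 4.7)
The plan is to argue by contradiction. Assume $E'(\Q)_{\text{tors}}$ is not cyclic. By Mazur's theorem (Theorem \ref{thm-mazur}), any non-cyclic torsion subgroup over $\Q$ is of the form $\Z/2\Z\oplus\Z/2N\Z$ with $1\leq N\leq 4$, and in particular contains the full $2$-torsion. Hence $E'[2]\subseteq E'(\Q)$, and all three subgroups of $E'$ of order $2$ lie inside $E'(\Q)$; by Lemma \ref{lem-orbit-of-rational-points}(2) each of them is $\Q$-rational.

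Now I pull these three subgroups back through $\phi$. Since $\phi$ is defined over $\Q$, for any $\Q$-rational subgroup $H'\subseteq E'$ of order $2$, the preimage $\phi^{-1}(H')$ is a $\Q$-rational subgroup of $E$: if $\sigma\in G_\Q$ and $R\in\phi^{-1}(H')$, then $\phi(\sigma(R))=\sigma(\phi(R))\in\sigma(H')=H'$, so $\sigma(R)\in\phi^{-1}(H')$. Since $\ker(\phi)=\langle P\rangle$ has order $2^M$, each such preimage is a subgroup of $E$ of order $2^{M+1}$ containing $\langle P\rangle$. Moreover, by the standard lattice correspondence between subgroups of $E$ containing $\langle P\rangle$ and subgroups of $E/\langle P\rangle=E'$, these three preimages exhaust the subgroups of $E$ of order $2^{M+1}$ that contain $\langle P\rangle$.

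Finally, I apply Lemma \ref{lem-number-of-groups} with $p=2$ and $m=M$: among the subgroups of $E[2^{M+1}]$ of order $2^{M+1}$ containing $P$, exactly $p=2$ are cyclic. Combined with the previous paragraph, at least two of the three $\Q$-rational preimages just constructed are cyclic of order $2^{M+1}$ and contain $P$, which contradicts the hypothesis that the (exactly two) cyclic subgroups of order $2^{M+1}$ containing $P$ are \emph{not} $\Q$-rational. Thus $E'(\Q)_{\text{tors}}$ must be cyclic. The crux of the argument is the pullback step together with the counting from Lemma \ref{lem-number-of-groups}; the rest is essentially bookkeeping using Mazur's classification.
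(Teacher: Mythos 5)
Your proof is correct and follows essentially the same route as the paper's: both reduce, via Mazur's theorem, to showing that $E'$ cannot have all of its $2$-torsion rational, and both rest on the correspondence between order-$2$ subgroups of $E'$ and order-$2^{M+1}$ subgroups of $E$ containing $\langle P\rangle$. The only difference is packaging — the paper argues directly by pushing forward a single point $P'$ with $[2]P'=P$ and invoking Lemma \ref{lem-necessity-for-subgroup-rationality}, whereas you argue by contradiction, pulling back all three order-$2$ subgroups and counting the cyclic ones with Lemma \ref{lem-number-of-groups}; your extra counting is harmless but not needed, since a single non-rational $2$-torsion point on $E'$ already suffices.
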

	
	\begin{proof}
		Let $E / \Q$ and $P \in E$ be as in the statement (note that Lemma \ref{lem-number-of-groups} says there are two cyclic subgroups of $E[2^{M+1}]$ that contain $\langle P \rangle$). Let $\phi \colon E \to E'$ be an isogeny with kernel $\langle P \rangle$. By Mazur's theorem on the possible torsion subgroups of elliptic curves over $\Q$, in order to show that $E'(\Q)_{\text{tors}}$ is cyclic, it suffices to show that there is a $2$-torsion point on $E'$ that is not defined over $\Q$. Suppose $P' \in E$ with $[2]P' = P$. Then, $\phi(P')\in E'$ has order $2$ but it is not rational as if it were, then by Lemma \ref{lem-necessity-for-subgroup-rationality}, the subgroup $\langle P' \rangle$ would be $\Q$-rational, contradicting our hypothesis. Thus, $E'(\Q)_{\text{tors}}$ is cyclic, as claimed.
	\end{proof}
	
	\begin{lemma}\label{lem-4tors-implies-bicyclic-isog} Let $E / \Q$ be an elliptic curve with a cyclic $\Q$-rational subgroup of order $4$. Then, $E$ is $\Q$-isogenous to a curve with full two-torsion.
	\end{lemma}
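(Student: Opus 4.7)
The plan is to exhibit, for a given rational cyclic subgroup $\langle P\rangle$ of order $4$ in $E$, an explicit $\Q$-rational isogeny whose target has full rational $2$-torsion. The natural choice is the isogeny $\phi\colon E\to E'$ with kernel $\langle [2]P\rangle$, which is $\Q$-rational by Lemma \ref{lem-subgps-of-rat-groups}. Once I have $\phi$, it will suffice to produce two distinct rational points of order $2$ in $E'$, since the sum of two rational $2$-torsion points is rational and accounts for the third nontrivial element of $E'[2]$.

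First I would record some preliminary observations. By Lemma \ref{lem-subgps-of-rat-groups}, the subgroup $\langle [2]P\rangle$ is $\Q$-rational of order $2$, and then by Lemma \ref{lem-orbit-of-rational-points}(2), $[2]P\in E(\Q)$. Pick $Q\in E[2]$ so that $E[2]=\langle [2]P,Q\rangle$. Then, by Lemma \ref{lem-necessity-for-point-rationality}(2) applied to $\phi$ with rational kernel element $[2]P$, the image $\phi(Q)$ is a rational point of $E'$ of order $2$. This is the first of the two $2$-torsion points I need.

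For the second point, I would consider $\phi(P)$. Since $P\notin\ker\phi=\{\mathcal{O},[2]P\}$, we have $\phi(P)\neq\mathcal{O}$, and $[2]\phi(P)=\phi([2]P)=\mathcal{O}$, so $\phi(P)$ has exact order $2$. To see that $\phi(P)$ is rational, by Lemma \ref{lem-necessity-for-point-rationality}(1) it is enough to check $\sigma(P)-P\in\ker\phi=\langle [2]P\rangle$ for every $\sigma\in G_\Q$. But $\langle P\rangle$ is $\Q$-rational, so $\sigma(P)=[a_\sigma]P$ for some $a_\sigma\in(\Z/4\Z)^\times=\{1,3\}$; consequently $\sigma(P)-P=[a_\sigma-1]P\in\{\mathcal{O},[2]P\}$, as required. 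This is the step I regard as the crux of the argument, since it is the only place where the specific structure of an order-$4$ cyclic rational subgroup (whose Galois action factors through $\{\pm 1\}$) is used.

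Finally I would verify that $\phi(P)\neq\phi(Q)$. This amounts to showing $P-Q\notin\langle [2]P\rangle$, which follows on inspection of orders: $P-Q=\mathcal{O}$ is impossible since $P$ and $Q$ have different orders, and $P-Q=[2]P$ would give $Q=[-1]P=[3]P$, again contradicting the orders. Hence $E'$ carries two distinct rational points of order $2$, so $E'[2]\subseteq E'(\Q)$, and $E'$ has full rational two-torsion, completing the proof.
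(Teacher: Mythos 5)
Your proof is correct and follows essentially the same route as the paper: the same isogeny $\phi$ with kernel $\langle [2]P\rangle$, the same two candidate points $\phi(P)$ and $\phi(Q)$, and the same distinctness check. The only cosmetic difference is that you verify $\sigma(P)-P\in\langle[2]P\rangle$ by the direct computation $\sigma(P)=[a_\sigma]P$ with $a_\sigma\in(\Z/4\Z)^\times$, whereas the paper invokes its Lemma \ref{orbit-of-supergroups-of-rational-groups} to the same effect.
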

	
	\begin{proof}
		Let $E / \Q$ be an elliptic curve, and $\langle P_{4}\rangle$ be a $\Q$-rational subgroup of $E$ of order 4. Then, the subgroup of $\langle P_{4} \rangle$ of order 2, namely $\langle [2]P_4\rangle$, is $\Q$-rational by Lemma \ref{lem-subgps-of-rat-groups} and thus, $P_{2}=[2]P_4$ is defined over $\Q$ by Lemma \ref{lem-orbit-of-rational-points}. Let $\phi \colon E \to E'$ be the isogeny with kernel $\langle P_{2}\rangle$. Then, we claim that $E'$ has full $2$-torsion defined over $\Q$, generated by $\phi(P_4)$ and $\phi(Q_2)$, where $E[2]=\langle P_2,Q_2\rangle$.

		Since $\langle P_{4} \rangle$ is $\Q$-rational and $P_2\in E(\Q)$, it follows that $\sigma(P_{4}) - P_{4} \in \langle P_{4} \rangle \cap E[2] = \langle P_2\rangle$ for all $\sigma \in G_{\Q}$, by Lemma \ref{orbit-of-supergroups-of-rational-groups}. The first statement of Lemma \ref{lem-necessity-for-point-rationality} then says that $\phi(P_{4})$ is defined over $\Q$. By the second statement of Lemma \ref{lem-necessity-for-point-rationality}, $\phi(Q_{2})$ is a point of order $2$ defined over $\Q$.
		
		Finally, $\phi(Q_{2})$ is not equal to $\phi(P_{4})$ as if it were, then $\phi(P_{4}) - \phi(Q_{2}) = \phi(P_{4} - Q_{2}) = \mathcal{O}$ so $P_{4} - Q_{2} \in \langle P_{2}\rangle$ but $P_{4} - Q_{2}$ has order $4$, a contradiction. This concludes the proof of the lemma.
	\end{proof}
	
	\begin{lemma}\label{lem-Q-Rational-Grps-Order-8} Let $E / \Q$ be an elliptic curve and let $P_{8} \in E(\Q)[8]$ be a point of order $8$. Then, $E$ is isogenous to an elliptic curve with torsion subgroup $\Z / 2 \Z \times \Z / 4 \Z$.
	\end{lemma}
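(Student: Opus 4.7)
The plan is to replicate the construction in Lemma \ref{lem-4tors-implies-bicyclic-isog}, now using the rational order-$2$ subgroup contained in $\langle P_8 \rangle$ as the kernel of the isogeny. Set $P_4 = [2]P_8$ and $P_2 = [4]P_8$; both lie in $E(\Q)$. Complete $P_2$ to a basis $E[2] = \langle P_2, Q_2 \rangle$, and let $\phi\colon E \to E'$ be the rational $2$-isogeny with $\ker\phi = \langle P_2 \rangle$. Following the argument in Lemma \ref{lem-4tors-implies-bicyclic-isog} verbatim, Lemma \ref{lem-necessity-for-point-rationality} parts (2) and (3) show that $\phi(Q_2) \in E'(\Q)$ has order $2$ and $\phi(P_8) \in E'(\Q)$. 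Since $[4]P_8 \in \ker\phi$ while $P_4 = [2]P_8 \notin \ker\phi$, the point $\phi(P_8)$ has order exactly $4$, with $[2]\phi(P_8) = \phi(P_4)$.

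These two rational points are independent: the unique order-$2$ element of $\langle \phi(P_8) \rangle$ is $\phi(P_4)$, and $\phi(P_4) \neq \phi(Q_2)$ because $P_4 - Q_2$ has order $4$ while $\ker\phi$ has order $2$. Hence $\langle \phi(P_8), \phi(Q_2) \rangle \cong \Z/2\Z \times \Z/4\Z$ sits inside $E'(\Q)_{\text{tors}}$, and by Mazur's theorem \ref{thm-mazur} either $E'(\Q)_{\text{tors}} \cong \Z/2\Z \times \Z/4\Z$ (and we are done), or $E'(\Q)_{\text{tors}} \cong \Z/2\Z \times \Z/8\Z$.

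In the latter case, $E'$ itself possesses a rational point of order $8$, and the same construction can be iterated to produce a further $2$-isogeny $\phi'\colon E' \to E''$ with $E''(\Q)_{\text{tors}} \supseteq \Z/2\Z \times \Z/4\Z$. One checks that $E'' \neq E$: the kernel of $\phi'$ is $\langle [4]P'_8 \rangle$ for a rational order-$8$ point $P'_8 \in E'$, and a direct computation in the abstract group $\Z/2\Z \times \Z/8\Z$ shows that $[4]P'_8 = \phi(P_4)$ independently of the choice of $P'_8$, whereas the dual $\hat\phi\colon E' \to E$ has kernel $\langle \phi(Q_2) \rangle$, which differs from $\langle \phi(P_4) \rangle$ by the previous paragraph. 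Iterating and arguing in the same way at each step, the sequence of distinct curves produced lies in the isogeny class of $E$, and Kenku's bound $C_2(E) \leq 8$ in Theorem \ref{thm-kenku} forces termination at a curve whose rational torsion is exactly $\Z/2\Z \times \Z/4\Z$.

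The main obstacle is the termination step: one must verify that each iterate is genuinely new (so that no cycle of curves with $\Z/2\Z \times \Z/8\Z$ torsion can persist), at which point Kenku's finiteness result immediately closes the argument.
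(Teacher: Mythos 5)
Your first two paragraphs are precisely the paper's proof: quotient by $\langle [4]P_8\rangle$, use Lemma \ref{lem-necessity-for-point-rationality} to see that $\phi(Q_2)$ and $\phi(P_8)$ are rational of orders $2$ and $4$, and note that $\phi([2]P_8)\neq \phi(Q_2)$ because $[2]P_8-Q_2$ has order $4$ while the kernel has order $2$. The paper stops there and asserts ``the result follows,'' implicitly reading the conclusion as ``torsion subgroup containing $\Z/2\Z\times\Z/4\Z$''; that containment is all that is ever used later (the lemma is only invoked to rule out rational points of order $8$ under hypotheses that forbid any isogenous curve from having bicyclic torsion of order $8$). So the residual case $E'(\Q)_{\text{tors}}\cong \Z/2\Z\times\Z/8\Z$ that your third paragraph confronts is a real imprecision in the literal statement, and it can genuinely occur at the first step (e.g.\ when $E(\Q)_{\text{tors}}\cong\Z/8\Z$ is cyclic, the quotient can land on the curve with $\Z/2\Z\times\Z/8\Z$), so your iteration is a legitimate improvement rather than a detour. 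The one soft spot is the one you flagged: proving $E^{(n+1)}\neq E^{(n-1)}$ rules out backtracking but not a longer cycle, and Kenku's bound does not apply to a mere finite list that might be revisited. The clean way to close it is to observe that the condition you verify at each step, $\ker\phi^{(n)}\neq\ker\widehat{\phi^{(n-1)}}$, is exactly the condition for the composite $E\to E^{(n)}$ to have \emph{cyclic} kernel of order $2^n$; then Theorem \ref{thm-ratnoncusps} (no rational cyclic $32$-isogenies), or simply $C_2(E)\geq n+1$ together with Kenku's $C_2\leq 8$, bounds $n$ and forces termination. With that observation added, your argument is complete and in fact proves a slightly sharper statement than the paper's own proof does.
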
 
	
	\begin{proof}
		Let $E / \Q$ be a curve with a point $P_8$ of order $8$ defined over $\Q$. Thus, $P_{2} = [4]P_{8}$ is defined over $\Q$. Suppose $E[2] = \langle P_{2}, Q_{2} \rangle$ for some $Q_{2} \in E[2]$. Let $\phi \colon E \to E'$ be an isogeny with kernel $\langle P_{2} \rangle$. Then, $\phi(P_{8})\in E'$ is a rational point of order $4$ because $\phi$ is a rational isogeny. Also,  $\phi(Q_{2})\in E'$ is a rational point of order $2$ by Lemma \ref{lem-necessity-for-point-rationality}. Moreover, $\phi(Q_{2})$ is not equal to $[2]\phi(P_{8}) = \phi([2]P_{8})$ as if it were, then $\phi([2]P_{8}) - \phi(Q_{2}) = \phi([2]P_{8} - Q_{2}) = O$ so $[2]P_{8} - Q_{2} \in \langle P_{2} \rangle$ but $[2]P_{8} - Q_{2}$ is of order 4. So $E'$ has full two-torsion and a rational point of order $4$, and the result follows.  \end{proof}
	
	\begin{remark} Lemmas \ref{lem-4tors-implies-bicyclic-isog} and \ref{lem-Q-Rational-Grps-Order-8} show that classifying isogeny graphs with a curve with $E(\Q)_{\text{tors}} = \Z / 2 \Z \times \Z / 2 \Z$ or $\Z / 2 \Z \times \Z / 4 \Z$ is enough to classify the isogeny graphs with a curve with $E(\Q)_{\text{tors}} = \Z / 4 \Z$ or $\Z / 8 \Z$ and classifying isogeny graphs with a curve with $E(\Q)_{\text{tors}} = \Z / 2 \Z \times \Z / 6 \Z$ is enough to classify the isogeny graphs with a curve with $E(\Q)_{\text{tors}} = \Z / 12 \Z$. \end{remark}
	
	\begin{lemma}\label{lem-rectangle-lemma} Let $E / \Q$ be an elliptic curve, let $p,q$ be distinct primes, and let $P,Q,R$ be non-zero points on $E$, such that $P$ and $R$ are of $p$-power order, and $Q$ is of $q$-power order, such that $\langle P + Q \rangle$ is a $\Q$-rational subgroup of $E$. Let $\phi \colon E \to E'$ be a $\Q$-isogeny with kernel $\langle P+Q \rangle$, and let $\psi \colon E \to E''$ be a $\Q$-isogeny with kernel $\langle P\rangle$.  Then: 
		\begin{enumerate}
			\item The point $\phi(R)$ is defined over $\Q$ if and only if $\psi(R)$ is defined over $\Q$.
			\item The order of $\phi(R)$ is equal to the order of $\psi(R)$. In particular, $\phi(R)$ is non-zero of $p$-power order if and only if $\psi(R)$ is non-zero of $p$-power order.
		\end{enumerate}
	\end{lemma}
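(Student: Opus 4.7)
The plan is to exploit the fact that because $|P|$ is a power of $p$ and $|Q|$ is a power of $q$ with $p\neq q$, the orders of $P$ and $Q$ are coprime. Hence the cyclic group $\langle P+Q\rangle$ decomposes canonically as the internal direct sum $\langle P\rangle\oplus\langle Q\rangle$, and its $p$-primary part is exactly $\langle P\rangle$. In particular, multiplying $P+Q$ by $|Q|$ gives a generator of $\langle P\rangle$, so Lemma \ref{lem-subgps-of-rat-groups} guarantees $\langle P\rangle$ is $\Q$-rational (justifying the existence of $\psi$). This decomposition is the single observation that drives both parts.

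For part (1), I would apply Lemma \ref{lem-necessity-for-point-rationality}(1) to each isogeny: $\phi(R)$ is rational over $\Q$ exactly when $\sigma(R)-R\in\langle P+Q\rangle$ for every $\sigma\in G_\Q$, while $\psi(R)$ is rational exactly when $\sigma(R)-R\in\langle P\rangle$ for every $\sigma$. Since $R$ has $p$-power order, every $\sigma(R)-R$ also has $p$-power order. Thus an element $\sigma(R)-R$ lies in $\langle P+Q\rangle=\langle P\rangle\oplus\langle Q\rangle$ if and only if its projection onto $\langle Q\rangle$ vanishes, which is equivalent to $\sigma(R)-R\in\langle P\rangle$. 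The two rationality conditions are therefore identical.

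For part (2), I would use the standard identity
\[
\text{ord}(\phi(R)) \;=\; \frac{|R|}{|\langle R\rangle \cap \ker(\phi)|}, \qquad \text{ord}(\psi(R)) \;=\; \frac{|R|}{|\langle R\rangle \cap \ker(\psi)|},
\]
so it suffices to check that $\langle R\rangle\cap\langle P+Q\rangle=\langle R\rangle\cap\langle P\rangle$. Since $\langle R\rangle$ sits in the $p$-primary torsion of $E$, any element of the left-hand intersection has $p$-power order and therefore lies in the $p$-primary component $\langle P\rangle$ of $\langle P+Q\rangle$; the reverse inclusion is immediate. Consequently the two orders coincide, and in particular $\phi(R)=\mathcal{O}$ iff $R\in\langle P\rangle$ iff $\psi(R)=\mathcal{O}$, giving the "non-zero of $p$-power order" equivalence (the $p$-power-order claim is automatic for homomorphic images of a $p$-power-order point).

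I do not anticipate any serious obstacle: the only conceptual step is recognizing that $\langle P+Q\rangle$ splits as $\langle P\rangle\oplus\langle Q\rangle$ when $\gcd(|P|,|Q|)=1$, and everything else is a direct application of Lemma \ref{lem-necessity-for-point-rationality}(1) combined with the elementary order formula for images of cyclic groups under a homomorphism.
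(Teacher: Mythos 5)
Your proposal is correct and follows essentially the same route as the paper: both arguments rest on the observation that $\langle P\rangle$ is the $p$-primary part of $\langle P+Q\rangle$, so that for the $p$-power-order elements $\sigma(R)-R$ (for part (1), via Lemma \ref{lem-necessity-for-point-rationality}) and the multiples $[n]R$ (for part (2)) membership in $\langle P+Q\rangle$ is equivalent to membership in $\langle P\rangle$. Your order formula $\mathrm{ord}(\phi(R))=|R|/|\langle R\rangle\cap\ker(\phi)|$ is just a restatement of the paper's ``smallest positive $n$ with $[n]R\in\ker$'' computation, so there is no substantive difference.
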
 
	
	\begin{proof}
		Let $P,Q,R$ be as in the statement of the lemma. Note that if $\langle P+Q\rangle$ is a $\Q$-rational subgroup, so is $\langle P\rangle$ by Lemma \ref{lem-subgps-of-rat-groups}.  Also, we note that $\langle P\rangle \subseteq \langle P+Q\rangle$ is the largest subgroup of $p$-power order. Moreover, $\phi(R)$ is rational if and only if $\sigma(R) - R \in \langle P + Q \rangle$ by \ref{lem-necessity-for-point-rationality}, for all $\sigma \in G_\Q$.  Since $\sigma(R)-R$ is $p$-power order (because both $R$ and $\sigma(R)$ are), it follows that $\sigma(R)-R \in \langle P+Q \rangle$ if and only if $\sigma(R)-R\in \langle P\rangle$. Hence, we conclude that $\phi(R)$ is rational if and only if $\psi(R)$ is rational. This proves (1).
		
		For (2), we note that:
		\begin{align*}
		\text{order}(\psi(R)) & = \text{smallest positive } n \text{ such that  } nR \in \langle P \rangle\\
		& = \text{smallest positive } n \text{ such that  } nR \in \langle P+Q \rangle =\text{order}(\phi(R)),
		\end{align*}
		where we have used the fact that $R$ is of $p$-power order. Thus, the result follows.
	\end{proof} 
	
	\begin{lemma}\label{lem-subsequent-rational-pts}
		Let $E / \Q$ be an elliptic curve and let $p$ be an odd prime. Suppose $E$ has a point $P_1$ of order $p$ defined over $\Q$, and suppose $C_{p}(E) = m+1 > 1$ such that $\langle P_{m} \rangle$ is a cyclic $\Q$-rational group of order $p^m$ containing $P_{1}$. For $0 \leq j \leq m$, let $P_{j} \in \langle P_{m} \rangle$ such that $P_0=\mathcal{O}$ and $[p]P_{j} = P_{j-1}$, and  let $\phi_{j} \colon E \to E_j$ be an isogeny with kernel $\langle P_{j} \rangle$. 
		\begin{enumerate}
			\item If $1\leq j \leq m-1$, then $E_j$ has a rational point of order $p$. Further, for $\phi_{{{m-1}}}\colon E \to E_{m-1}$, the curve $E_{m-1}$ has a rational point of order $p$ but no rational points of order $p^{2}$. 
			\item If $j=m$, the curve $E_m$ has no rational points of order $p$.
		\end{enumerate}
	\end{lemma}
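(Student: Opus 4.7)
The plan is to handle the three claims—existence of a rational point of order $p$ on $E_j$ for $1\le j\le m-1$, absence of rational $p^2$-torsion on $E_{m-1}$, and absence of rational $p$-torsion on $E_m$—by lifting arguments through the isogenies $\phi_j$. For the first claim, I take $\phi_j(P_{j+1})$ as the candidate. It has exact order $p$ because $[p]P_{j+1}=P_j\in\ker\phi_j$ while $P_{j+1}\notin\ker\phi_j=\langle P_j\rangle$ (an order comparison). For rationality, Lemma \ref{lem-necessity-for-point-rationality}(1) reduces the problem to verifying $\sigma(P_{j+1})-P_{j+1}\in\ker\phi_j$ for every $\sigma\in G_{\Q}$, which is precisely the conclusion of Lemma \ref{orbit-of-supergroups-of-rational-groups} applied with $N=p^{j+1}$, $M=p$, $P_N=P_{j+1}$, $P_M=P_1$: the group $\langle P_{j+1}\rangle$ is $\Q$-rational by Lemma \ref{lem-subgps-of-rat-groups} (since $P_{j+1}\in\langle P_m\rangle$), and $P_1$ is defined over $\Q$, so we get $c=1$ and $\sigma(P_{j+1})-P_{j+1}\in\langle [p]P_{j+1}\rangle=\langle P_j\rangle$.

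For the second statement of (1), I argue by contradiction. Suppose $T\in E_{m-1}(\Q)$ has order $p^2$, and pick a lift $T'\in E(\overline{\Q})$ with $\phi_{m-1}(T')=T$. Then $[p]T$ is a rational point of order $p$ in $E_{m-1}$, so by Mazur (for odd $p$, $E_{m-1}(\Q)[p]$ is cyclic of order $p$) it must be a nonzero multiple of the rational point $\phi_{m-1}(P_m)$ produced in part (1a). Writing $[p]T=[k]\phi_{m-1}(P_m)$ with $\gcd(k,p)=1$ and subtracting an appropriate element of $\ker\phi_{m-1}=\langle P_{m-1}\rangle$ from $T'$, I arrange that $[p]T'=[j]P_m$ for some $j$ with $\gcd(j,p)=1$. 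An order count then forces $T'$ to have exact order $p^{m+1}$, and inverting $j$ modulo $p^{m+1}$ yields $P_{m-1}=[j^{-1}p^2]T'\in\langle T'\rangle$. Since $\sigma(T')-T'\in\ker\phi_{m-1}=\langle P_{m-1}\rangle\subseteq\langle T'\rangle$ (by rationality of $T$), the group $\langle T'\rangle$ is $\Q$-rational cyclic of order $p^{m+1}$, giving $C_p(E)\ge m+2$ and contradicting the hypothesis.

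For part (2), the same lifting strategy works. Assume $S\in E_m(\Q)$ has order $p$ and write $S=\phi_m(S')$ for some $S'\in E(\overline{\Q})$, so $[p]S'=[j]P_m$ for some $j\in\Z/p^m\Z$. If $\gcd(j,p)=1$, the argument from (1b) shows $\langle S'\rangle$ is $\Q$-rational cyclic of order $p^{m+1}$, contradicting $C_p(E)=m+1$. Otherwise $p\mid j$; replacing $S'$ by $S'-[j/p]P_m$ we may assume $S'\in E[p]$, and $S'\notin\langle P_1\rangle$ since $\phi_m(S')\ne\mathcal{O}$. The rationality of $S$ forces $\sigma(S')-S'\in\ker\phi_m\cap E[p]=\langle P_m\rangle\cap E[p]=\langle P_1\rangle$, so in the basis $\{P_1,S'\}$ of $E[p]$ the representation $\rho_{E,p}$ is unipotent upper-triangular; this forces $\det\rho_{E,p}\equiv 1$, contradicting the fact that $\det\rho_{E,p}$ equals the mod-$p$ cyclotomic character, which is surjective onto $(\Z/p\Z)^{\times}$ for odd $p$. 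The main delicate step throughout is the order bookkeeping in the lifts: in particular, verifying that $\langle P_{m-1}\rangle$ (resp.\ $\langle P_m\rangle$) really is contained in $\langle T'\rangle$ (resp.\ $\langle S'\rangle$), which is what upgrades mere Galois-stability modulo the kernel into genuine $\Q$-rationality of the larger subgroup.
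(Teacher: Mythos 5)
Your proof is correct and follows essentially the same route as the paper: part (1a) via Lemmas \ref{orbit-of-supergroups-of-rational-groups} and \ref{lem-necessity-for-point-rationality}, and parts (1b) and (2) by showing that a hypothetical rational point of order $p^2$ on $E_{m-1}$ (resp.\ of order $p$ on $E_m$) would force either a $\Q$-rational cyclic subgroup of $E$ of order $p^{m+1}$, contradicting $C_p(E)=m+1$, or a unipotent mod-$p$ image, contradicting surjectivity of the determinant. The only difference is presentational: the paper enumerates the candidate cyclic subgroups downstairs containing the known rational point, while you lift the hypothetical point back to $E$ and run the same contradiction there.
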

	
	\begin{proof}
		Let $p$ be an odd prime and let $E / \Q$ be an elliptic curve. Suppose $C_{p}(E) = m+1 > 1$, with $C_p$ defined as in Theorem \ref{thm-kenku}. Suppose $\langle P_{m} \rangle$ is a cyclic $\Q$-rational subgroup of $E$ of order $p^m$ containing a point $P_1$ of order $p$ defined over $\Q$. Note that $\{\mathcal{O}\},\langle P_1\rangle,\ldots,\langle P_m\rangle$ are the $m+1$ subgroups of $p$-power order that are $\Q$-rational by Lemma \ref{lem-subgps-of-rat-groups}, and  their orders are $1,\ldots,p^m$, respectively.

		For $1 \leq j \leq m - 1$, denote $[p]P_{j+1} = P_{j}$. Then, $\langle P_{j} \rangle$ is the unique subgroup of $\langle P_{m} \rangle$ of order $p^{j}$.  Now let us fix $j$ with $1 \leq j \leq m-1$, and let $\phi_{j} \colon E \to E_j$ be an isogeny with kernel $\langle P_{j} \rangle$.
		
		Suppose $E[p^j] = \langle P_{j}, Q_{j} \rangle$ for some $Q_{j} \in E$. By Lemma \ref{orbit-of-supergroups-of-rational-groups}, $\sigma(P_{{j+1}}) - P_{{j+1}} \in \langle P_{j} \rangle$ for all $\sigma \in G_{\Q}$. By Lemma \ref{lem-necessity-for-point-rationality}, the point $\phi_j(P_{{j+1}})\in E_{j}(\Q)$ is rational. Moreover, it is clear that $\phi_{j}(P_{{j+1}})\in E_{j}$ is a point of order $p$.
		
		Now let us look at the case for $\phi_{{m-1}} \colon E \to E_{m-1}$, i.e., an isogeny with kernel $\langle P_{{m-1}} \rangle$. The point $\phi_{{m-1}}(P_{m})$ is a point of order $p$ defined over $\Q$ by Lemma \ref{orbit-of-supergroups-of-rational-groups} and Lemma \ref{lem-necessity-for-point-rationality}. Let $P_{{m+1}} \in E$ such that $[p]P_{{m+1}} = P_{m}$ and let $Q_{1} \in E$ be a non-rational point of order $p$ (i.e., $E[p] = \langle P_{1}, Q_{1} \rangle$; recall that $P_1\in E(\Q)$ and $p\geq 3$, so $Q_1$ is necessarily not rational). Then, the groups of order $p^2$ that contain $\phi_{{m-1}}(P_{m})$ are $$\langle \phi_{{{m-1}}}(P_{{m+1}}) \rangle, \langle \phi_{{{m-1}}}(P_{{m+1}} + Q_{1}) \rangle, \ldots, \langle \phi_{{{m-1}}}(P_{{m+1}} + [p-1]Q_{1}) \rangle.$$ 
		Suppose $\phi_{{{m-1}}}(P_{{m+1}} + [c]Q_{1})$ is defined over $\Q$ for some $0 \leq c \leq p - 1$. Note that $P_{{m-1}} \in \langle P_{{m+1}} + [c]Q_{1} \rangle$. Then, by Lemma \ref{lem-necessity-for-subgroup-rationality}, we have that $\langle P_{{m+1}} + [c]Q_{1} \rangle$ is a $\Q$-rational group of order $p^{m+1}$, however, we showed above none of the $\Q$-rational subgroups of $E$ have order $p^{m+1}$. Thus, no point on $E_{m-1}$ of order $p^2$ is defined over $\Q$, as desired.
		
		Finally, let us look at the case of  $\phi_{m} \colon E \to E_m$, i.e., an isogeny with kernel $\langle P_{m} \rangle$. By Lemma \ref{lem-number-of-groups}, the curve $E_m$ has $p+1$ subgroups of order $p$, namely, $$\langle \phi_{{m}}(Q_{1}) \rangle, \langle \phi_{{m}}(P_{{m+1}}) \rangle, 
		\langle \phi_{{m}}(P_{{m+1}} + Q_{1}) \rangle, \ldots, \langle \phi_{{m}}(P_{{m+1}} + [p-1]Q_{1}) \rangle.$$ 
		Suppose first that $\phi_{{m}}(Q_{1})$ is rational. Then, by Lemma \ref{lem-necessity-for-point-rationality}, $\sigma(Q_{1}) - Q_{1} \in \langle P_{m} \rangle$ for all $\sigma \in G_{\Q}$. As $\sigma(Q_{1}) - Q_{1} \in E[p] = \langle P_{1}, Q_{1} \rangle$, this means that $\sigma(Q_{1}) - Q_{1} \in \langle P_{m} \rangle \cap \langle P_{1}, Q_{1} \rangle = \langle P_{1} \rangle.$ Thus, for each $\sigma \in G_{\Q}$, we have $\sigma(Q_{1}) = [a]P_{1} + Q_{1}$ for some $a=a(\sigma) \in \Z / p \Z$. It follows from the rationality of $P_{1}$, that the image of $\rho_{E,p}$ is of the form $\left\{\left(\begin{array}{cc} 1 & * \\ 0 & 1 \end{array}\right)\right\}$. However, the determinant map has to surject onto $(\Z / p \Z)^\times$. Thus, we have a contradiction as the determinant is always 1 in our case. Finally, suppose $0 \leq c \leq p - 1$ and assume that $\phi_{{m}}(P_{{m+1}} + [c]Q_{1})$ is defined over $\Q$. Note that $\langle P_{m+1} + [c]Q_{1} \rangle$ contains $\langle P_{m} \rangle$. Then by Lemma  \ref{lem-necessity-for-subgroup-rationality}, the group $\langle P_{m+1} + [c]Q_{1} \rangle$ is $\Q$-rational of order $p^{m+1}$, a contradiction.
	\end{proof}
	
	\begin{lemma}\label{lem-evenness-of-C(E)-new}
		Let $E / \Q$ be an elliptic curve with an element $P\in E$ that generates a $\Q$-rational group of order $2^n$ for some $n \geq 1$. Then, the two cyclic groups of order $2^{n+1}$ that contain $P$ are either both $\Q$-rational or they are both not $\Q$-rational. Thus, for a general elliptic curve $E' / \Q$, $C_{2}(E')$ is either $1$ or even, and $C(E') \leq 8$ and $C(E') \neq 5$ and $C(E') \neq 7$.
	\end{lemma}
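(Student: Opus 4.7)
The plan is to deduce all three conclusions from a single clean observation about the Galois action on the set of cyclic supergroups of $\langle P\rangle$ of order $2^{n+1}$.

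First I would establish the main dichotomy. By Lemma \ref{lem-number-of-groups} applied with $p=2$ and $m=n$, there are exactly two cyclic subgroups $H_0, H_1 \subseteq E[2^{n+1}]$ of order $2^{n+1}$ containing $\langle P\rangle$. Since $\langle P\rangle$ is $\Q$-rational, $G_\Q$ sends any subgroup containing $\langle P\rangle$ to another such subgroup, and therefore acts on the two-element set $\{H_0, H_1\}$. If $H_0$ is $\Q$-rational then every $\sigma \in G_\Q$ fixes $H_0$, but a permutation of a two-element set that fixes one element must also fix the other, so $H_1$ is $\Q$-rational as well (and by symmetry, the converse holds).

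Next I would pass from this main claim to the structure of $C_2(E')$. Let $n_k$ denote the number of $\Q$-rational cyclic subgroups of $E'$ of order exactly $2^k$, so $n_0 = 1$ and $C_2(E') = \sum_{k \geq 0} n_k$. By Lemma \ref{lem-orbit-of-rational-points}, $n_1 = |E'(\Q)[2]| - 1 \in \{0, 1, 3\}$. For $k \geq 2$, Lemma \ref{lem-subgps-of-rat-groups} guarantees that each $\Q$-rational subgroup of order $2^k$ contains a unique $\Q$-rational subgroup of order $2^{k-1}$, so partitioning by this subgroup and applying the dichotomy to each class writes $n_k$ as a sum of $0$s and $2$s; hence $n_k$ is even for $k \geq 2$. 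If $n_1 = 0$, then by Lemma \ref{lem-subgps-of-rat-groups} every $\Q$-rational cyclic $2$-power subgroup would contain a non-trivial $\Q$-rational subgroup of order $2$, forcing $n_k = 0$ for all $k \geq 1$, and so $C_2(E') = 1$. Otherwise $1 + n_1 \in \{2, 4\}$ is even, and adding the even tail shows that $C_2(E')$ is even.

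Finally, the bound $C(E') \leq 8$ is precisely the first part of Theorem \ref{thm-kenku}. For the exclusion of $C(E') = 5$ and $C(E') = 7$, note that $C(E') = \prod_p C_p(E')$ with each factor a positive integer; since $5$ and $7$ are prime, some $C_p(E')$ would have to equal $5$ or $7$. But $C_2(E')$ is $1$ or even by the previous step (so in particular not $5$ or $7$), and Kenku's table gives $C_3 \leq 4$, $C_5 \leq 3$, and $C_p \leq 2$ for $p \geq 7$, so no $C_p(E')$ can take the value $5$ or $7$, yielding the desired contradictions. There is no serious obstacle in this argument; the crucial and only subtle point is the observation that a group action on a set of size two is necessarily all-or-nothing, which is the engine driving the entire proof.
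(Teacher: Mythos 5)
Your proof is correct, and the key step is handled by a genuinely different argument from the paper's. For the main dichotomy, the paper works concretely: it writes the two supergroups as $\langle P'\rangle$ and $\langle P'+Q_2\rangle$ with $[2]P'=P$, assumes $\langle P'\rangle$ is $\Q$-rational so that $\sigma(P')=[a]P'$ for odd $a$, invokes Lemma \ref{lem-orbit-of-rational-points} to pin down $\sigma(Q_2)\in\{Q_2,P_2+Q_2\}$, and then computes $\sigma(P'+Q_2)=[a](P'+Q_2)$ or $[a+2^n](P'+Q_2)$ explicitly. You instead observe that $G_\Q$ permutes the two-element set of cyclic supergroups of $\langle P\rangle$ of order $2^{n+1}$ (using Lemma \ref{lem-number-of-groups} to know there are exactly two, and the $\Q$-rationality of $\langle P\rangle$ to see the set is Galois-stable), and that a permutation of a two-element set with a fixed point is the identity. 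This is shorter, avoids any computation, and makes clear that the result is purely combinatorial; the paper's computation has the minor virtue of exhibiting the Galois action on the second supergroup explicitly, which is in the spirit of the surrounding lemmas. Your deduction of the parity of $C_2(E')$ via the quantities $n_k$ is also somewhat more carefully argued than the paper's one-line appeal to subgroups ``coming in pairs,'' and your handling of $C(E')\neq 5,7$ matches the paper's use of Kenku's bounds. No gaps.
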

	
	\begin{proof}
		Let $P \in E$ be a point that generates a $\Q$-rational group of order $2^n$ with $n \geq 1.$ Let $P_{2} \in \langle P \rangle$ be the element of order $2$, then  $P_{2} = \left[2^{n-1}\right]P$ and moreover, $P_{2}$ is defined over $\Q$ by Lemmas \ref{lem-subgps-of-rat-groups} and \ref{lem-orbit-of-rational-points}. Let $P' \in E$ such that $[2]P' = P$. Let $Q_{2} \in E$ be an element of order $2$ not equal to $P_{2}$, then from Lemma \ref{lem-number-of-groups}, it follows that the subgroups of $E$ of order $2^{n+1}$ that contain $P$ are $\langle P' \rangle$ and $\langle P'+Q_2 \rangle$. If neither $\langle P' \rangle$ nor $\langle P' + Q_2 \rangle$ are $\Q$-rational, then we are done. Otherwise, suppose without loss of generality, that $\langle P' \rangle$ is $\Q$-rational. Then, for an arbitrary $\sigma \in G_{\Q}$, we have $\sigma(P') = [a]P'$ for some odd $a \in \Z$. By Lemma \ref{lem-orbit-of-rational-points}, $\sigma(Q_2) = Q_{2}$ or $P_{2} + Q_{2}$. If $\sigma$ fixes $Q_{2}$, then $\sigma(P' + Q_{2}) = \sigma(P') + \sigma(Q_{2}) = [a]P' + Q_{2} = [a]P' + [a]Q_{2} = [a](P' + Q_{2})$. If $\sigma(Q_{2}) = P_{2} + Q_{2} = [2^{n}]P' + Q_{2}$, then $\sigma(P'+ Q_{2}) = \sigma(P') + \sigma(Q_{2}) = [a]P' + [2^{n}]P' + Q_{2} = [a+2^{n}]P' + [a+2^{n}]Q_{2} = [a+2^{n}](P'+Q_{2})$ and it follows that $\langle P' + Q_2 \rangle$ is also $\Q$-rational.
		
		The last statement of the lemma is clear, because we have just shown that if an elliptic curve $E' / \Q$ has a $\Q$-rational $2^{n}$-isogeny for some $n \geq 1$, then the $\Q$-rational subgroups of $2$-power order come in pairs. We have shown that $C_{2}(E')$ is $1$ or even and by Theorem \ref{thm-kenku}, $C_{q}(E')$ is bounded by $4$ for all odd primes $q$, and hence, $C(E')$ can never equal $5$ or $7$.
		
	\end{proof}
	
	\begin{lemma}\label{lem-21 or 27}
		Let $E/\Q$ be an elliptic curve with a $\Q$-rational cyclic $n$-isogeny with $n=21$ or $27$. Then, $E(\Q)_{\text{tors}} \cong \{\mathcal{O}\}$ or $\Z/3\Z$, and both can occur.
	\end{lemma}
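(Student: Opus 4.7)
The plan is to combine Kenku's bound on $\Q$-isogeny classes (Theorem~\ref{thm-kenku}) with Mazur's theorem on rational torsion (Theorem~\ref{thm-mazur}) and the classification of rational points on $X_0(N)$ (Theorem~\ref{thm-ratnoncusps}), in order to pin down $E(\Q)_{\text{tors}}$ to a short list and then eliminate all but the two stated possibilities.

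For $n = 27$, a cyclic $27$-isogeny provides rational cyclic subgroups of orders $1, 3, 9, 27$, so $C_3(E) \geq 4$. By Theorem~\ref{thm-kenku}(5) we have $C_3(E) \leq 4$ with equality forcing $C(E) = 4$, so $C_p(E) = 1$ for every prime $p \neq 3$. Hence $E(\Q)_{\text{tors}}$ is a cyclic $3$-group, and Mazur's theorem leaves only $\{\mathcal{O}\}$, $\Z/3\Z$, or $\Z/9\Z$. To rule out $\Z/9\Z$, I would appeal to Theorem~\ref{thm-ratnoncusps}: $X_0(27)$ has only finitely many non-cuspidal $\Q$-rational points, and these all correspond to CM elliptic curves with $j \in \{0,\ -2^{15}\cdot 3 \cdot 5^{3}\}$; a direct inspection of Table~\ref{tab-CMgraphs}, in which every such curve appears, confirms that none has a rational point of order $9$.

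For $n = 21$, the cyclic $21$-isogeny gives $C_3(E), C_7(E) \geq 2$, hence $C(E) \geq 4$; by Theorem~\ref{thm-kenku}(2), $C_7(E) = 2$ forces $C(E) \leq 4$ along with one of $C_3(E) = 2$, $C_2(E) = 2$, or $C(E) = 2$. Combining these constraints yields $C(E) = 4$, $C_3(E) = C_7(E) = 2$, and $C_p(E) = 1$ for all other primes $p$. Therefore $E(\Q)_{\text{tors}}$ is cyclic (by Mazur), with no $2$- or $5$-torsion, and without an element of order $9$ (as $C_3 = 2$); the only remaining candidates are $\{\mathcal{O}\}$, $\Z/3\Z$, and $\Z/7\Z$. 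The main obstacle is eliminating $\Z/7\Z$: if $E$ had both a rational point of order $7$ and a rational point of order $3$, the resulting point of order $21$ would contradict Mazur, so the delicate subcase is the one where $E$ has a rational $7$-torsion point while its rational cyclic $3$-subgroup contains no rational $3$-point. I would handle this by invoking Theorem~\ref{thm-ratnoncusps}: $X_0(21)(\Q) \cong \Z/8\Z$ and has $4$ rational cusps, so there are exactly $4$ non-cuspidal $\Q$-rational points on $X_0(21)$, and an explicit check of the four corresponding $j$-invariants (equivalently, of the $R_4(21)$-type isogeny classes represented by $162.b$ and $1296.f$ in Table~\ref{R_k graphs}) shows that none admits a rational point of order $7$.

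Finally, I would verify the existence statement by exhibiting representatives of both allowed torsion configurations: for $n = 21$, the isogeny classes $1296.f$ (torsion $\{\mathcal{O}\}$) and $162.b$ (torsion $\Z/3\Z$) in Table~\ref{R_k graphs}; for $n = 27$, the isogeny classes $432.e$ and $27.a$ in Table~\ref{tab-CMgraphs}. As anticipated, the principal difficulty lies in the last step for $n = 21$: Kenku and Mazur reduce matters to a finite check on $X_0(21)$, but that check still requires an explicit enumeration of the four non-cuspidal $j$-invariants and the verification that none of them (nor any of its $\Q$-twists) carries a rational $7$-torsion point.
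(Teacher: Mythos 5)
Your reduction to a short list of candidate torsion subgroups is sound, and for $n=27$ your route is valid and genuinely different from the paper's: you observe, in effect, that a curve with a rational cyclic $27$-isogeny has CM, so Olson's classification of CM torsion (already baked into Table \ref{tab-CMgraphs}) rules out $\Z/9\Z$ immediately. This is precisely the alternative argument the authors record in the remark following the lemma. The only slip is your claim that $j=0$ admits a $27$-isogeny; in fact only $j=-2^{15}\cdot 3\cdot 5^3$ does (the $j=0$ curves occupy the middle vertices of the $L_4$ graph, where the maximal cyclic isogeny has degree $9$), but this is harmless since you are checking a superset. The paper instead argues uniformly for both values of $n$ by computing division polynomials.

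For $n=21$, however, there is a genuine gap exactly where you flag ``the principal difficulty.'' The four $j$-invariants coming from $X_0(21)$ are not CM, so the Olson argument is unavailable, and each $j$-invariant carries infinitely many quadratic twists; checking the isogeny classes \texttt{162.b} and \texttt{1296.f} (or any finite list of curves) therefore does not exclude a rational point of order $7$ on some other twist. You correctly state that one must verify that no twist carries $7$-torsion, but you supply no mechanism for carrying out that infinite verification. The missing idea is the one the paper uses: compute the $7$-division polynomial of one representative curve for each of the four $j$-invariants, check that $\Q(x(P))$ has degree $3$ or $21$ for every point $P$ of order $7$, and then invoke the twist-invariance of $\Q(x(P))$ for $j\neq 0,1728$ (Lemma 9.6 of \cite{lozano0}); since a rational torsion point has rational $x$-coordinate, no quadratic twist can acquire a rational point of order $7$. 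Without this step (or an equivalent device, such as showing that the modular curve parametrizing a rational point of order $7$ together with an independent rational $3$-isogeny has no non-cuspidal rational points), the elimination of $\Z/7\Z$ --- and hence the lemma for $n=21$ --- is not complete.
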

	\begin{proof} 
		Let $E/\Q$ be an elliptic curve with a $\Q$-rational cyclic $n$-isogeny of degree $n=21$ or $27$. Then, by Table 4 of \cite{lozano0}, we have 
		$$j(E) \in \{-3^2\cdot 5^6/2^3,\ 3^3\cdot 5^3/2,\ -3^2\cdot 5^3\cdot 101^3/2^{21},\ -3^3\cdot 5^3\cdot 383^3/27,\ -2^{15}\cdot 3\cdot 5^3 \},$$
		where the first four $j$-invariants have a $21$-isogeny and the last one has a $27$-isogeny.  Examples of elliptic curves with these $j$-invariants and smallest conductor and $E(\Q)_{\text{tors}}\cong \Z/3\Z$, according to LMFDB, include:
		$$\{\texttt{162.c3},\ \texttt{162.b4},\ \texttt{162.c2},\ \texttt{162.b1}, \ \texttt{27.a2}\}.$$
		Examples as above with trivial torsion over $\Q$ include:
		$$\{\texttt{162.b3},\ \texttt{162.c4},\ \texttt{162.b2},\ \texttt{162.c1}, \ \texttt{27.a1}\}.$$
		Further, the classification of degrees of $\Q$-rational isogenies (Theorem \ref{thm-ratnoncusps}) shows that any elliptic curve with an $n$-isogeny, for $n=21$ or $27$, has no other isogenies. If $p$ was a prime such that $E(\Q)$ contains a point of exact order $p^m$, then $E/\Q$ would have a $\Q$-rational $p^m$-isogeny, and therefore $p^m$ divides $21$ or $27$, respectively. Thus, in order to conclude that the only possible torsion subgroups are either $\{\mathcal{O}\}$ or $\Z/3\Z$, it suffices to show that there are no points of order $9$ in $E(\Q)$ when $n=27$, or of order $7$ when $n=21$. 
		
		A Magma calculation of division polynomials show that if $E$ is the curve \texttt{27.a2}, and $P\in E[9]$ is non-zero, then $K=\Q(x(P))$ has degree $3$, $6$, or $27$.  Now, since $j(E)\neq 0, 1728$, every elliptic curve $E'$ with $j=-2^{15}\cdot 3\cdot 5^3$ is a quadratic twist of \texttt{27.a2} and if $P'\in E'[9]$, then $\Q(x(P'))=K=\Q(x(P))$ by Lemma 9.6 of \cite{lozano0}. Thus, no elliptic curve with $j=-2^{15}\cdot 3\cdot 5^3$ has a $9$-torsion point defined over the rationals.
		
		Similarly, Magma shows that if $E$ is one of the curves \texttt{162.c3},\ \texttt{162.b4},\ \texttt{162.c2},\ \texttt{162.b1}, and $P\in E[7]$, then $K=\Q(x(P))$ has degree $3$, or $21$. Since none of the $j$-invariants are $0$ or $1728$, it follows that any elliptic curve with a $21$-isogeny has the $x$-coordinate of $7$-torsion points defined over an extension of either degree $3$ or $21$, and therefore none has a point of order $7$ defined over $\Q$, as we wanted to show. 
	\end{proof} 
	
	\begin{remark}
		Let $E/\Q$ be an elliptic curve with a cyclic $27$-isogeny defined over $\Q$. Then, $E/\Q$ has CM (see \cite{lozano0}, Table 4). By \cite{olson} (see also \cite{clarkcooketal}, Section 4), the torsion subgroup of an elliptic curve over $\Q$ with CM is isomorphic to one of the following groups: $0$, $\Z/2\Z$, $\Z/3\Z$, $\Z/4\Z$, $\Z/6\Z$, or $\Z/2\Z\oplus \Z/2\Z$. In particular, $\Z/9\Z$ does not occur. Note, however, that the $j$-invariants with a $21$-isogeny are not CM curves, so this argument cannot be applied to those curves.
	\end{remark}
	
	Last, we show a corollary of Theorems  \ref{thm-kenku} and \ref{thm-ratnoncusps}.
	
	\begin{cor}\label{cor-uniquecyclicQrationalgroup}
		Let $E/\Q$ be an elliptic curve, and suppose there exists a finite cyclic $\Q$-rational subgroup $H\subseteq E$ of order $n=n(H)$, and every finite cyclic $\Q$-rational subgroup of $E$ is contained in $H$. Then, $C_p=C_p(E)$ is bounded by
		\begin{center}
			\begin{tabular}{c|ccccccccccccc}
				$p$ & $2$ & $3$ & $5$ & $7$ & $11$ & $13$ & $17$ & $19$ & $37$ & $43$ & $67$ & $163$ & \text{else}\\
				\hline 
				$C_p\leq $ & $2$ & $4$ & $3$ & $2$ & $2$ & $2$ & $2$ & $2$ & $2$ & $2$ & $2$ & $2$ & $1$.
			\end{tabular}
		\end{center}
		Moreover, the order $n(H)$ takes one of the following values:
		$$n=n(H) = 2,3,5,6,7,9,10,11,13,14,15,17,18,19,21,25,27,37,43,67, \text{ or } 163.$$
	\end{cor}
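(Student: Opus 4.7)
The plan is to combine the hypothesis with Kenku's theorem (Theorem \ref{thm-kenku}) and the classification of cyclic $\Q$-rational isogeny degrees (Theorem \ref{thm-ratnoncusps}). The first observation I would record is that, by Lemma \ref{lem-subgps-of-rat-groups}, every subgroup of the cyclic $\Q$-rational group $H$ is itself $\Q$-rational. Combined with the hypothesis of the corollary, this means the cyclic $\Q$-rational subgroups of $E$ are in bijection with the divisors of $n = n(H)$; writing $n = \prod_p p^{k_p}$, we obtain $C_p(E) = k_p + 1$ for each prime $p$. Feeding this into Kenku's bounds on $C_p(E)$ immediately recovers the bounds stated in the corollary for every prime except $p = 2$: Kenku gives $C_2(E) \leq 8$ whereas the corollary claims $C_2(E) \leq 2$. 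So the heart of the proof is improving the bound at the prime $2$.

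I expect the step $k_2 \leq 1$ to be the only real obstacle; once it is in place, the rest is bookkeeping. To establish it I would argue by contradiction. Assume $k_2 \geq 2$, so $H$ contains an element $P_4$ of order $4$. Then $\langle P_4 \rangle$ and $\langle P_2 \rangle$, where $P_2 = [2]P_4$, are $\Q$-rational by Lemma \ref{lem-subgps-of-rat-groups}, and Lemma \ref{lem-orbit-of-rational-points}(2) shows $P_2 \in E(\Q)$. Applying Lemma \ref{lem-evenness-of-C(E)-new} with $n = 1$ to $P_2$, the two cyclic subgroups of $E[4]$ of order $4$ containing $P_2$ are simultaneously $\Q$-rational or simultaneously not; since $\langle P_4 \rangle$ is one of them and is $\Q$-rational, there exists a second $\Q$-rational cyclic subgroup $\langle P_4' \rangle \neq \langle P_4 \rangle$ of order $4$. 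But $H$ is cyclic, so it contains a unique cyclic subgroup of order $4$, namely $\langle P_4 \rangle$; hence $\langle P_4' \rangle \not\subseteq H$, contradicting the maximality hypothesis on $H$. This forces $k_2 \leq 1$, hence $C_2(E) \leq 2$.

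To conclude, I would list the possible $n(H)$. The quotient map $E \to E/H$ is a cyclic $\Q$-rational isogeny of degree $n$, so by Theorem \ref{thm-ratnoncusps} the degree $n$ lies in the set $\{2,3,4,5,6,7,8,9,10,11,12,13,14,15,16,17,18,19,21,25,27,37,43,67,163\}$. Imposing the constraint $k_2 \leq 1$ removes exactly the values $4, 8, 12, 16$, leaving the list stated in the corollary; as a sanity check, each remaining value is compatible with the individual bounds $k_p \leq C_p - 1$ coming from Kenku.
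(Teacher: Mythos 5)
Your proof is correct, and it follows the same overall strategy as the paper: reduce the odd-prime bounds to the known classification of cyclic isogeny degrees, isolate $C_2(E)\leq 2$ as the one genuinely new claim, and prove it by exhibiting a second $\Q$-rational cyclic subgroup of order $4$ that cannot fit inside the cyclic group $H$. The only substantive difference is which auxiliary lemma supplies that second subgroup. The paper invokes Lemma \ref{lem-4tors-implies-bicyclic-isog}: it passes to the $2$-isogenous curve $E'$ with full rational two-torsion, observes that $E'$ has three distinct $2$-isogenous neighbours, and composes back to get two distinct cyclic $4$-isogenies out of $E$. You instead apply Lemma \ref{lem-evenness-of-C(E)-new} directly to $P_2=[2]P_4$, concluding that both order-$4$ lifts of $\langle P_2\rangle$ are $\Q$-rational. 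Your route is slightly more economical: it never leaves the curve $E$, and it sidesteps the (unstated in the paper) verification that the composed degree-$4$ isogenies $E\to E'\to E''$ actually have cyclic kernels. Both arguments rest on lemmas already established in Section \ref{sec-lemmas}, so this is a matter of taste rather than of substance; your bookkeeping for the final list of values of $n(H)$ (strike $4,8,12,16$ from the list in Theorem \ref{thm-ratnoncusps}) matches the paper's conclusion exactly.
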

	\begin{proof}
		Suppose $E$, $H$, and $n(H)$ are as in the statement. Then, $E$ has a rational cyclic isogeny of degree $n$. Theorem \ref{thm-ratnoncusps} now implies the bounds on $C_p(E)$, except for $C_2$. However, Lemma \ref{lem-4tors-implies-bicyclic-isog} says that if $E/\Q$ has a rational cyclic isogeny of degree $4$, then $E$ is $2$-isogenous to an elliptic curve $E'$ with full two-torsion defined over $\Q$. In particular, there are at least three $2$-isogenous curves to $E'$, say $E$, $E''$, and $E'''$, such that $E\to E''$ and $E \to E'''$ are distinct rational cyclic isogenies of degree $4$. Thus, $H$ would have two distinct subgroups of order $4$, but this is impossible because it is cyclic. Hence, $E$ cannot have isogenies of degree $4$, and $C_2(E)\leq 2$ as claimed.
		
		Finally, the number $n(H)$ is the order of a cyclic $\Q$-rational isogeny, and these are classified by Theorem \ref{thm-ratnoncusps}.
	\end{proof}

	\section{The Possible Isogeny Graphs}\label{sec-whatgraphs}
	
	In this section we show Theorem \ref{thm-mainisogenygraphs}. The isogeny graphs seem to have first appeared in the so-called Antwerp tables \cite{antwerp} where the authors remark that ``the tables in fact illustrate almost all the known ways in which isogenies can occur''. In this section, we shall use Theorem \ref{thm-kenku} to determine the possible isogeny graphs that occur for elliptic curves over $\Q$ and then, in the rest of the paper, we will determine what torsion subgroups over $\Q$ can occur at each vertex of an isogeny-torsion graph.

	Let $E/\Q$ be an elliptic curve. We first distinguish two cases, according to whether the following condition is met: 
	\begin{enumerate}
		\item[$\Diamond$] There exists a finite cyclic $\Q$-rational subgroup $H\subseteq E$ such that every finite cyclic $\Q$-rational subgroup of $E$ is contained in $H$.
	\end{enumerate}
	Suppose first that $E/\Q$ satisfies $\Diamond$, with $H=\langle P\rangle$, and let $n=n(P)=n(H)$ be the order of $P$:
	\begin{itemize}
		\item If $n=1$, then $E/\Q$ does not have any isogenous curves other than itself, and the graph is a single vertex. Notice that in this case $E(\Q)_{\text{tors}}$ is necessarily trivial. The trivial graph is denoted by $L_1$.
		\item If $n=p^k$ is a prime power, then Cor. \ref{cor-uniquecyclicQrationalgroup} says that $p\in \{2,3,5,7,11,13,17,19,37,43,67,163\}$ and $k\leq 3$, where $k=3$ is only possible for $p=3$. In this case, the isogenies correspond to the subgroups $\langle [p^j] P \rangle$, for $0\leq j \leq k$, and therefore the graphs are linear with $\leq 4$ vertices. We shall refer to these graphs as linear,  and denote them by $L_{k+1}(p^k)$. Note that in this case, the graph has $k+1$ vertices and two curves have a cyclic $\Q$-rational isogeny of degree $p^{k}$, the maximal degree among the finite cyclic $\Q$-rational isogenies. The case of $k=1$ (two vertices), i.e., in the case when $C(E) = C_{p}(E) = 2$, occurs for $p\in \{2, 3, 5, 7, 11, 13, 17, 19, 37, 43, 67, 163\}$:
		\begin{center}\begin{tikzcd}
				E_{1} \arrow[r,"p", no head] & E_{2} = E_1/\langle P\rangle
		\end{tikzcd}\end{center}
		The case of an isogeny graph $L_3(p^2)$ with three vertices, i.e., when $C(E) = C_{p}(E) = 3$ and $n(P)=p^2$, can only occur for $p = 3$ or $5$, by Cor. \ref{cor-uniquecyclicQrationalgroup}.
		\begin{center} 
			\begin{tikzcd}
				E_{1} \arrow["p", r, no head] & E_{2} = E_1/\langle [p]P\rangle \arrow["p",r, no head] & E_{3}= E_1/\langle P\rangle
			\end{tikzcd}
		\end{center} 
		Finally, the case of $L_4(p^3)$, a linear isogeny graph with $4$ vertices, where $C(E)=C_p(E)=4$, can only occur for $p=3$, so in this case, we will denote $L_4(27)$ simply by $L_4$.
		\begin{center} 
			\begin{tikzcd}
				E_{1} \arrow["3", r, no head] & E_{2}= E_1/\langle [p^2]P\rangle \arrow["3", r, no head] & E_{3}= E_1/\langle [p]P\rangle \arrow["3", r, no head] & E_{4}= E_1/\langle P\rangle
			\end{tikzcd}
		\end{center} 
	In particular, $E_1$ and $E_{4}$ have rational isogenies of degree $27$, which by the tables in \cite{Alvaro}, implies that $\textit{j}(E_1) = \textit{j}(E_4)=-2^{15}\cdot 3\cdot 5^3$, and so $E_1$ and $E_4$ are CM. This shows the last claim of Theorem \ref{thm-mainisogenygraphs}.
		\item If $n=n(P)$ is composite but not a prime power, then Corollary  \ref{cor-uniquecyclicQrationalgroup} says that $n=6, 10,14,15,18$, or $21$. Thus, in all these cases we have $C_p(E)=2$ and $C_q(E)=2$ or $3$, for some distinct primes $p$ and $q$. We shall call these graphs rectangular, and denote them by $R_v(n)$ where $v=C(E)$ is the number of vertices, and $n=p\cdot q$ or $p\cdot q^2$. When $n=pq=6,10,14,15$ or $21$, then the graph is $R_4(n)$, a square of the form: 
		\begin{center}
			\begin{tikzcd}
				E_{1} \arrow["p",d, no head] \arrow["q",r, no head] & E_{2} = E_1/\langle [p]P \rangle \arrow["p",d, no head] \\
				E_{3} = E_1/\langle [q]P \rangle \arrow["q",r, no head]           & E_{4}  = E_1/\langle P \rangle
			\end{tikzcd}
		\end{center}
		Finally, if $n=pq^2$, then $n=18$ with $p=2$ and $q=3$. In this case the graph is $R_6=R_6(18)$, a rectangle of the form:
		\begin{center}
			\begin{tikzcd}
				E_{1} \arrow["2",d, no head] \arrow["3",r, no head] & E_{3} = E_1/\langle [6]P \rangle \arrow["2",d, no head] \arrow["3",r, no head] & E_{5} = E_1/\langle [2]P \rangle \arrow["2",d, no head] \\
				E_{2} = E_1/\langle [9]P \rangle \arrow["3",r, no head]           & E_{4}  = E_1/\langle [3]P \rangle \arrow["3",r, no head] & E_{6} = E_1/\langle P \rangle .
			\end{tikzcd}
		\end{center}
	\end{itemize}
	
	We remark here that the result below about $3$-isogenies is stated in \cite{kenku} without proof, so we include one here for completeness.

	\begin{lemma}\label{lem-27}
		Let $E/\Q$ be an elliptic curve such that $C_3(E)=C(E)=4$. Then, there is an elliptic curve in the $\Q$-isogeny class of $E$ with a $27$-isogeny.
	\end{lemma}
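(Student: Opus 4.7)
The plan is to analyze the shape of the isogeny graph using Theorem~\ref{thm-kenku} and then extract a cyclic $27$-isogeny from it. Since $C(E)=\prod_p C_p(E)$ and $C_3(E)=C(E)=4$, we have $C_q(E)=1$ for every prime $q\neq 3$. Hence every $\Q$-rational isogeny in the class has $3$-power degree, and the isogeny graph has exactly four vertices joined only by $3$-isogenies; by definition of the isogeny class it is connected.

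For each curve $E_i$ in the class, let $c_i$ denote the number of distinct $\Q$-rational subgroups of order $3$ of $E_i$, which equals the number of $3$-isogenies out of $E_i$. First I would show $1\leq c_i\leq 2$. The lower bound is connectivity. For the upper bound, $E_i[3]$ contains exactly four subgroups of order $3$, corresponding to the points of $\PP^{1}(\FF_3)$, and $\GQ$ acts on these through its image in $\PGL(2,\FF_3)$; since a M\"obius transformation fixing three points of $\PP^1$ is the identity, $c_i\geq 3$ would force $c_i=4$, and combined with the trivial subgroup this would yield $C_3(E_i)\geq 5$, contradicting Theorem~\ref{thm-kenku}.

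Next I would determine the shape of the graph. Writing $e$ for the number of edges, $\sum_i c_i=2e$, so together with $c_i\in\{1,2\}$ and $e\geq 3$ (connectivity on four vertices) we get $e\in\{3,4\}$; multi-edges are excluded because they would exhaust the degree at both endpoints and produce an isolated pair, contradicting connectivity. If $e=4$, every $c_i=2$ and the graph is the $4$-cycle on $E_1,E_2,E_3,E_4$. Then the two length-$2$ walks from $E_1$ to the opposite vertex produce two compositions whose kernels are cyclic subgroups of $E_1$ of order $9$ (cyclic, because at each intermediate vertex the kernel of the isogeny back to $E_1$ differs from the kernel of the isogeny going onward) and distinct from one another (because their unique subgroups of order $3$ are the two distinct kernels of the $3$-isogenies out of $E_1$). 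Combined with the two order-$3$ subgroups and the trivial one, this forces $C_3(E_1)\geq 5$, again contradicting Theorem~\ref{thm-kenku}. Hence $e=3$, the graph is a tree with degree sequence $(2,2,1,1)$, which is the path $E_1\to E_2\to E_3\to E_4$.

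Finally, let $\phi_i\colon E_i\to E_{i+1}$ be the three $3$-isogenies. I claim that $\phi_3\circ\phi_2\circ\phi_1\colon E_1\to E_4$ has cyclic kernel of order $27$, giving the desired $27$-isogeny. The kernel has order $27$, and a subgroup of order $27$ in $E_1[27]\cong(\Z/27\Z)^2$ is cyclic if and only if it does not contain $E_1[3]$ (the only non-cyclic alternative is $\Z/9\Z\oplus\Z/3\Z$, whose $3$-torsion is all of $E_1[3]$). If $E_1[3]$ were inside this kernel, $\phi_3\phi_2\phi_1$ would factor as $[3]_{E_1}$ followed by a rational $3$-isogeny $E_1\to E_4$, contradicting the non-adjacency of $E_1$ and $E_4$ in the path. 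The main obstacle is to exclude the $4$-cycle cleanly: one must verify that the two length-$2$ compositions around the cycle really produce distinct cyclic order-$9$ subgroups, which is settled by tracking their intersections with $E_1[3]$ and using that each intermediate vertex has two distinct rational order-$3$ subgroups.
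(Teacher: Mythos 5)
Your proof is correct, but it follows a genuinely different route from the paper's. The paper first disposes of the case where some curve in the class has all of its rational cyclic subgroups inside a single cyclic subgroup (which then necessarily has order $27$), and otherwise locates a curve with two independent $3$-isogenies; the only substantive step there is ruling out a \emph{third} independent $3$-isogeny, which the paper does by showing the mod-$3$ image would be scalar and hence have non-surjective determinant. You instead run a purely structural argument on the isogeny graph: each vertex carries at most two rational order-$3$ subgroups (your sharp-$3$-transitivity argument in $\PGL(2,\FF_3)$ is the same mechanism as the paper's scalar computation, except that you close it with Kenku's bound $C_3\le 4$ rather than with the determinant), an edge count then leaves only the $4$-cycle or the path, the $4$-cycle is killed by exhibiting five rational cyclic $3$-power subgroups at one vertex, and the path yields the $27$-isogeny as the composite of its three edges, with cyclicity of the kernel checked via the non-adjacency of the endpoints. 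What your version buys is an explicit construction of the $27$-isogeny and a self-contained derivation of the $L_4$ shape; the paper's corresponding sentence (``the isogeny graph must be of type $L_4$, and the corner curves have rational $27$-isogenies'') leans on the surrounding discussion in Section \ref{sec-whatgraphs}. Both arguments quietly use the standard facts that the isogeny graph is connected with $C(E)$ vertices and that $C_q(E)=1$ for all $q\ne 3$ forces every edge to be a $3$-isogeny, so you are on the same footing as the paper in that respect.
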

	\begin{proof} 
	    If $E / \Q$ satisfies the $\Diamond$ property, then we are done. Suppose $E/\Q$ does not satisfy the $\diamond$ property, then there is an elliptic curve $E'$ isogenous to $E$ and such that $E'[3]=\langle P,Q\rangle$, and both $\langle P\rangle$ and $\langle Q \rangle$ are $\Q$-rational subgroups of $E'$. If $E'$ has no other rational $3$-isogenies, then $E'$ must have a rational cyclic $9$-isogeny containing one of the $3$-isogenies, and so the isogeny graph of $E$ must be of type $L_4$, and the elliptic curves in the corner vertices have rational $27$-isogenies. 
		
		Otherwise, suppose for a contradiction that  $E'$ has three independent $3$-isogenies (and the graph would be of type $T_4$ as described below for the case of $C_2(E)=C(E)=4$). Then, there are distinct points $P$, $Q$, and $R$ of order $3$ such that the subgroups generated by them are $\Q$-rational. Since $ P,Q$  generate $E[3]$, they form a basis and $R = [a]P+[b]Q$ for some integers $a$ and $b$ (neither of which is $0 \bmod 3$, because $R$ generates a third independent isogeny by assumption). Suppose $\sigma\in G_\Q$ sends $\sigma(P) = [\alpha] P$, $\sigma(Q)=[\beta] Q$ and $\sigma(R) = [\gamma] R$. Then,
		$$[\gamma a] P + [\gamma b] Q = [\gamma] R = \sigma(R) = \sigma([a] P+[b] Q) = [a\alpha] P + [b\beta] Q,$$
		thus, $a\gamma \equiv  a\alpha \bmod 3$ and $b\gamma \equiv b\beta \bmod 3$. Since $a$ and $b$ are units mod $3$, we conclude $\gamma\equiv\alpha\equiv \beta \bmod 3$. Hence, for every $\sigma\in G_\Q$ there is $r=r(\sigma)$ such that $\sigma(T) = [r(\sigma)] T$, for all $T$ in $E[3]$. In other words, $\rho_{E,3}(\sigma)=r(\sigma)\cdot \operatorname{Id}$ has a diagonal form with identical diagonal entries. But then, the subgroup $\det(\rho_{E, 3}(G_\Q))$ is formed by the squares in $\FF_3^\times$, which does not include $2 \bmod 3$. This means the determinant is not surjective and that is a contradiction.
	\end{proof} 
	
	Now suppose that $E/\Q$ does not satisfy the $\Diamond$ property. Then, it follows that there is an elliptic curve $E'$ isogenous to $E$ and a  prime $p$ such that $E'[p]=\langle P,Q\rangle$, and both $\langle P\rangle$ and $\langle Q \rangle$ are $\Q$-rational subgroups of $E'$. This means that $E'$ has two independent rational $p$-isogenies and $C_p(E')\geq 3$, but this can only occur for $p=2$, $3$, or $5$, by Theorem 6.2 of \cite{lozano0}. If $C_5(E')\geq 3$, then $C_5(E')=C(E)=3$ by Theorem \ref{thm-kenku}, and $E'$ is the curve in the middle of an $L_3(25)$ graph, which we have already considered. If $C_3(E')\geq 3$, then $C_3(E')=3$ or $4$. If $C_3(E')=4$, then $C(E)=4$ by Theorem \ref{thm-kenku}, and Lemma \ref{lem-27} shows that $E'$ is one of the two middle curves in an $L_4$ graph, which has already been considered. 
	
	If $C_3(E')=3$, then Theorem \ref{thm-kenku} says that $C(E')=3$ or $6$, and either $C_3(E')=C(E')=3$, in which case this is an $L_3(9)$ graph, or $C_3(E')=3$ and $C_2(E')=2$, in which case we are dealing with an $R_6$ graph, already dealt with. The final case is when $E'$ has two independent rational $2$-isogenies, in which case, $E'[2]=E'(\Q)[2]$ is defined over $\Q$. Thus, using $E'$ as our new $E$ if necessary, we shall assume our curve $E/\Q$ has all of its $2$-torsion points defined over $\Q$, in which case $C_2(E)\geq 4$. Then, either $C_3(E)=2$ and $C_2(E)=4$, or Lemma \ref{lem-evenness-of-C(E)-new} shows that $C_2(E)$ is even, and therefore either $C(E)=C_2(E)=4$, $6$, or $8$.

	Let us first assume that $E/\Q$ has its full $2$-torsion defined over $\Q$ and $C_3(E)=1$. Then, $C_2(E) = C(E) =k$ with $k=4$, $6$, or $8$, and we shall call these graphs two-isogeny graphs, and  denote them by $T_k$. Since all isogenies in such a graph between consecutive vertices are $2$-isogenies, the number of vertices of the isogeny graph is enough to distinguish a $T_{k}$ graph from every other isogeny graph. Therefore it is not necessary to add a parenthesis denoting the maximal cyclic $\Q$-isogeny degree to determine the graph type. We let $E[2]=\langle P_2,Q_2\rangle$. 
	
	\begin{itemize} 
		\item If $C_{2}(E) = C(E) = 4$, then the only isogenies are the $2$-isogenies that come from $2$-torsion points defined over $\Q$. Then, the graph is of type $T_4$ as follows: 
		\begin{center} 
			\begin{tikzcd}
				& E_{2} = E_1/\langle P_2\rangle                             &       \\
				& E_1 \arrow["2",u, no head] \arrow["2",ld, no head] \arrow["2",rd, no head] &       \\
				E_{3}= E_1/\langle P_2+Q_2\rangle &                                   & E_{4} = E_1/\langle Q_2\rangle
			\end{tikzcd}
		\end{center} 
		
		\item If $C_{2}(E) = C(E) = 6$, then $E$ must have an additional rational cyclic $4$-isogeny. Let $Q_{4} \in E$ such that $[2]Q_4 = Q_2$, then after a change of basis of $E[4]$ or $E[2]$ if necessary, we may assume $Q_4$ generates a $\Q$-rational $4$-isogeny. Then, $\langle Q_4+P_2 \rangle$ is also $\Q$-rational by Lemma \ref{lem-evenness-of-C(E)-new}. Hence, we have accounted for the $6$ curves that are $2^k$-isogenous to $E$, and the graph is of type $T_6$, as follows:  
		\begin{center} 
			\begin{tikzcd}
				E_{2}=E_1/\langle P_2\rangle &                                       &                             & E_{5}=E_1/\langle Q_4\rangle \\
				& E_{1} \arrow["2",lu, no head] \arrow["2",ld, no head] \arrow["2",r, no head] & E_{4}=E_1/\langle Q_2\rangle \arrow["2",ru, no head] \arrow["2",rd] &       \\
				E_{3}=E_1/\langle P_2+Q_2\rangle &                                       &                             & E_{6}=E_1/\langle Q_4+P_2\rangle
			\end{tikzcd}
		\end{center}

		\item Suppose $C_{2}(E) = C(E) = 8$. Let $P_{4} \in E$ such that $[2]P_{4} = P_{2}$ and $Q_{8} \in E$ such that $[2]Q_{8} = Q_{4}$. Then, either (i) $E$ has two independent $4$-isogenies, i.e., after a change of basis of $E[4]$ if necessary, $E[4]=\langle P_4,Q_4\rangle$ and $\langle P_4 \rangle$ and $\langle Q_4 \rangle$ are $\Q$-rational, or (ii) $E$ has an $8$-isogeny corresponding to $\langle Q_8 \rangle$ (again, after a change of basis of $E[8]$ is necessary). We shall assume the latter (the former leads to the same graph, where $E$ would be $E/\langle Q_2 \rangle$ below), and call $E=E_1$. By Lemma \ref{lem-evenness-of-C(E)-new}, $\langle P_2+Q_8\rangle$ is also $\Q$-rational. Moreover, $E_4=E/\langle Q_2\rangle$ also has full two-torsion over $\Q$, by Lemma \ref{lem-4tors-implies-bicyclic-isog}. The curves described so far already account for $8$ curves that are $2^k$-isogenous to $E_1$, and therefore we have a graph of type $T_8$, as follows, where $Q_4=[2]Q_8$, and $Q_2=[4]Q_8=[2]Q_4$.
		\begin{center}
			\begin{tikzcd}
				& \substack{E_2=\\E_1/\langle P_2\rangle}                                &                            & \substack{E_7=\\E_1/\langle Q_8 \rangle}                     &       \\
				& E_{1} \arrow["2",u, no head] \arrow["2",ld, no head] \arrow["2",rd, no head] &                            &  \substack{E_6=\\E_1/\langle Q_4 \rangle} \arrow["2",rd, no head] \arrow["2",u, no head] &       \\
				\substack{E_3=\\E_1/\langle P_2 + Q_2 \rangle}&                                       & \substack{E_4=\\E_1/\langle Q_2 \rangle} \arrow["2",d, no head] \arrow["2",ru, no head] &                            & \substack{E_8=\\E_1/\langle P_2+Q_8 \rangle}\\
				&                                       &  \substack{E_5=\\E_1/\langle P_2+Q_4 \rangle}                     &                            &      
			\end{tikzcd}
		\end{center} 
	\end{itemize}

	Finally, we consider the case when $C_2(E)= 4$ and $C_3(E)= 2$, so that $C(E)=8$. In this case we have $\Q$-rational groups $\langle P \rangle$ and $\langle Q\rangle$ of order $2$ as before, and an additional $\Q$-rational group $\langle A\rangle$ of order $3$. This corresponds to the following type of graph, that we will denote by $S$-type:
	\begin{center} 
		\begin{tikzcd}
			& E_{3}=E_1/\langle P \rangle  \arrow["3",r, no head]                                 & E_{4}=E_1/\langle P+A \rangle                                 &       \\
			& E_{1} \arrow["2",u, no head] \arrow["3",r, no head] \arrow["2",rd, no head] \arrow["2",ld, no head] & E_{2}=E_1/\langle A \rangle \arrow["2",u, no head] \arrow["2",rd, no head] \arrow["2",ld, no head] &       \\
			E_{5}=E_1/\langle P+Q \rangle \arrow["3",r, no head] & E_{6}=E_1/\langle P+Q+A \rangle                                & E_{7}=E_1/\langle Q \rangle \arrow["3",r, no head]                       & E_{8}=E_1/\langle Q+A \rangle
		\end{tikzcd}
	\end{center} 
This completes the proof of Theorem \ref{thm-mainisogenygraphs}. 
	
	In the rest of the paper, we will determine the torsion subgroups of each of the curves in the isogeny graphs of $L_k$, $R_k$, $T_k$, or $S$ type. Since we have already treated elliptic curves with CM in Section \ref{sec-CM}, we will continue our investigation under the assumption that $E/\Q$ does not have CM.

	\section{Linear graphs $L_k$}\label{sec-linear}
	
	In this section, we analyze the possible torsion subgroups arising in linear graphs $L_k$. We will often use the fact that if a prime $p$ divides the order of $E(\Q)_{\text{tors}}$, then $C_p(E)>1$ (indeed, a point $P$ of order $p$ defined over $\Q$ induces a $\Q$-rational $p$-isogeny $E\to E/\langle P\rangle$). Below, $E_1=E$.
	\begin{enumerate}
		\item[$L_1$.] The graph consists of one vertex, the curve $E/\Q$ does not have any isogenous curves (other than itself), and $E(\Q)_{\text{tors}}$ is necessarily trivial as any torsion point $P\in E(\Q)_{\text{tors}}$ would induce a rational isogeny.
		
		\vskip 0.2in
		\item[$L_2$.] The case of two vertices, i.e., the case when $C(E) = C_{p}(E) = 2$ for some prime $p$ and $C_q(E)=1$ for all primes $q$ not equal to $p$, occurs for $p\in \{2, 3, 5, 7, 11, 13, 17, 19, 37, 43, 67, 163\}$:
		\begin{center}\begin{tikzcd}
				E_{1} \arrow["p",r, no head] & E_{2} = E_1/\langle P\rangle
		\end{tikzcd}\end{center}
		Suppose first that $p>7$ and $C_p(E)=2$. If $q$ divides the order of the torsion subgroup of $E_1$ or $E_2$, then $q=p$. Now, Mazur's theorem \ref{thm-mazur} shows that $E_1(\Q)_{\text{tors}}=E_2(\Q)_{\text{tors}}$ must be trivial. Hence, the isogeny-torsion graph $L_2(p)$ is of the form $([1],[1])$.
		
		Now let $p$ be $3, 5,$ or $7$, suppose $C_{p}(E) = C(E) = 2$ and such that $P \in E$ generates a $\Q$-rational subgroup of order $p$. Both $E_1$ and $E_2$ can have trivial torsion over $\Q$, simultaneously, giving $([1],[1])$. If $P\in E_1(\Q)_{\text{tors}}$, however, by Lemma \ref{lem-subsequent-rational-pts}, $E_2 = E / \langle P\rangle$ has no rational points of order $p$ and thus, $E_2(\Q)_{\text{tors}}$ is trivial. Thus, the isogeny-torsion graph is $([p],[1])$.
		
		Finally, if $C_{2}(E) = C(E) = 2$, then both isogenous curves have torsion subgroups isomorphic to $\Z / 2 \Z$, by Lemma \ref{lem-orbit-of-rational-points} and Lemma \ref{lem-2torspt-all-have-2torspt}, so the isogeny-torsion graph is $([2],[2])$.
		
		\vskip 0.2in
		\item[$L_3$.] By our work in Section \ref{sec-whatgraphs} (see also Corollary  \ref{cor-uniquecyclicQrationalgroup}), the case of $L_3(p^2)$, that is, $C(E)=C_p(E)=3$ and $n(P)=p^2$, can only occur for $p=3$ or $5$. In such case, we have
		\begin{center} 
			\begin{tikzcd}
				E_{1} \arrow["p",r, no head] & E_{2} = E_1/\langle [p]P\rangle \arrow["p",r, no head] & E_{3}= E_1/\langle P\rangle
			\end{tikzcd}
		\end{center} 
		Since $E_2$ has two independent $p$-isogenies, for $p=3$ or $5$, it follows that the image of $\rho_{E_2,p}$ is contained in a split Cartan subgroup of $\GL(2,\FF_p)$. If $E_2$ does not have rational $p$-torsion points, then neither $E_1$ or $E_3$ would have rational torsion points, by Lemma \ref{lem-subsequent-rational-pts} (which would imply a rational $p$-torsion point on $E_2$), and the isogeny-torsion graph is $([1],[1],[1])$. Otherwise, if $E_2$ has a rational $p$-torsion point, then Lemma \ref{p^2 isogeny} implies that either $E_1$ or $E_3$ have a rational point of $p$-torsion (and, clearly, a cyclic rational $p^2$-isogeny). Let us assume that $E_1$ has a rational $p$-torsion point. By Lemma \ref{lem-subsequent-rational-pts}, we must have $E_2(\Q)_{\text{tors}}\cong \Z / p \Z$ and $E_3(\Q)_{\text{tors}}$ is trivial. Moreover, $E_1(\Q)_{\text{tors}}$ can be $\Z/3\Z$, $\Z/9\Z$ when $p = 3$ and $E_{1}(\Q)_{\text{tors}}$ is $\Z/5\Z$ when $p = 5$, by Mazur's theorem \ref{thm-mazur}. Thus, the possible isogeny-torsion configurations in this case are $([1],[1],[1])$ for any $L_3(p^2)$, and in addition $([9],[3],[1])$, $([3],[3],[1])$ for $L_3(9)$, and $([5],[5],[1])$ for the $L_3(25)$-type.
		
		\vskip 0.2in
		\item[$L_4$.] The case of $L_4$ can only occur for $C(E)=C_3(E)=4$, i.e., for an elliptic curve $E_1/\Q$ with a $27$-isogeny, induced by a $\Q$-rational subgroup of $E$, $\langle P\rangle$ of order $27$:
		\begin{center} 
			\begin{tikzcd}
				E_{1} \arrow["3",r, no head] & E_{2}= E_1/\langle [9]P\rangle \arrow["3",r, no head] & E_{3}= E_1/\langle [3]P\rangle \arrow["3",r, no head] & E_{4}= E_1/\langle P\rangle
			\end{tikzcd}
		\end{center} 
		By Table 4 of \cite{lozano0}, we must have $\textit{j}(E)=-2^{15}\cdot 3\cdot 5^3$, and by Lemma \ref{lem-21 or 27}, we have $E_1(\Q)_{\text{tors}}\cong \Z/3\Z$ or trivial. Suppose that $E_1$ has non-trivial torsion over $\Q$. Then, by Lemma \ref{lem-subsequent-rational-pts}, we have $E_2(\Q)_{\text{tors}}$ has a point of order $3$, $E_3(\Q)_{\text{tors}}\cong \Z/3\Z$, and $E_4(\Q)_{\text{tors}}$ is trivial. It remains to show that $E_2(\Q)_{\text{tors}} \cong \Z/3\Z$. Indeed, if it was larger, then it would be $\Z/9\Z$, however \cite{olson} shows that there is no elliptic curve with CM and torsion subgroup over $\Q$ containing $\Z/9\Z$, and $E_2$ is a twist of an elliptic curve with $j=0$, therefore CM.
		
		Hence, the only possibilities for the $L_4$ isogeny-torsion graphs that can occur over $\Q$ are $([3],[3],[3],[1])$ and $([1],[1],[1],[1])$.
	\end{enumerate}
	
	\section{Rectangular graphs $R_k$}\label{sec-rectangular}
	
	In this section we determine the possible torsion configurations in graphs of $R_k(n)$-type, for $k=4$ and $6$.
	\begin{enumerate}
		\item[$R_4$.] In this case $C(E)=4$, with $C_p(E)=2$ and $C_q(E)=2$, for some distinct primes $p$ and $q$. As before, let $P \in E$ be a point that generates a $\Q$-rational group of order $pq$. The possibilities for $n=n(P)=pq$ are $6,10,14,15$ or $21$, and we obtain a square graph of the form
		\begin{center}
			\begin{tikzcd}
				E_{1} \arrow["p",d, no head] \arrow["q",r, no head] & E_{2} = E_1/\langle [p]P \rangle \arrow["p", d, no head] \\
				E_{3} = E_1/\langle [q]P \rangle \arrow["q",r, no head]           & E_{4}  = E_1/\langle P \rangle
			\end{tikzcd}
		\end{center}
		If $n$ is odd (i.e., $n=15$ or $21$), then by Mazur's theorem $E_1$ may have a point of order $3$, $5$, or $7$ defined over $\Q$, but none of order $15$ or $21$.  Note, however, that Lemma \ref{lem-21 or 27} shows that if $n=21$, then $E_1(\Q)_{\text{tors}}\cong \Z/3\Z$ or trivial. Thus, $E_1(\Q)_{\text{tors}}\cong \Z/p\Z$ with $p=3$ or $5$ or trivial.
		
		Suppose $E_1(\Q)_{\text{tors}}\cong \Z/p\Z$ with $p=3$ or $5$, and $P$ is a point that generates a $\Q$-rational subgroup of order $n=pq$. Thus, $[p]P$ is of order $q$, and $[q]P$ is of order $p$. Lemma \ref{lem-subsequent-rational-pts} shows that $E_3$ has no points of order $p$ over $\Q$.  Since $E_1\to E_3$ is of degree $p$, $E_3$ has rational $q$-torsion if and only if $E_1$ does, therefore neither one does, and $E_3(\Q)_{\text{tors}}$ is trivial. Similarly, $E_1\to E_2$ is of degree $q$, and so $E_2(\Q)_{\text{tors}}\cong \Z/p\Z$. Finally, $E_2\to E_4$ is also of degree $p$, and therefore $E_4(\Q)_{\text{tors}}$ must be trivial. Thus, it follows that if $n$ is odd, then the possible isogeny-torsion graphs of type $R_4(n)$ are either of the form $([1],[1],[1],[1])$, $([3],[3],[1],[1])$ or  $([5],[5],[1],[1])$ for $R_4(15)$, and of the form $([1],[1],[1],[1])$, or $([3],[3],[1],[1])$ for $R_4(21)$.
		
		Now suppose that $n$ is even, so that $n=6$, $10$, or $14$. That is, $C_2(E)=2=C_p(E)$ for $p=3,5,$ or $7$. Since $C_2(E)=2$, it follows that every elliptic curve in the graph has a rational $2$-torsion point, by Lemma \ref{lem-2torspt-all-have-2torspt}. Note that none of the curves may have a $7$-torsion point, by Mazur's theorem, as the curve would have a rational point of order $14$. Moreover, if $E_1$ has a rational point of order $p$, with $p=3$ or $5$, then our arguments above with $q=2$ show that $E_1(\Q)_{\text{tors}} = E_2(\Q)_{\text{tors}}\cong \Z/2p\Z$, while $E_3(\Q)_{\text{tors}} = E_4(\Q)_{\text{tors}}\cong \Z/2\Z$. Thus, the possible isogeny-torsion configurations of $R_4(n)$ with $n=6,10,14$ are $([2],[2],[2],[2])$, and  $([6],[6],[2],[2])$ if $n=6$, and $([10],[10],[2],[2])$ if $n=10$.\vskip 0.2in
		
		\item[$R_6$.] By Section \ref{sec-whatgraphs}, we have $C_2(E)=2$ and $C_3(E)=3$, with $n=18$. Let $P \in E$ be a point that generates a $\Q$-rational group of order $18$, so that we have the following isogeny graph:
		\begin{center}
			\begin{tikzcd}
				E_{1} \arrow["2",d, no head] \arrow["3",r, no head] & E_{3} = E_1/\langle [6]P \rangle \arrow["2",d, no head] \arrow["3",r, no head] & E_{5} = E_1/\langle [2]P \rangle \arrow["2",d, no head] \\
				E_{2} = E_1/\langle [9]P \rangle \arrow["3",r, no head]           & E_{4}  = E_1/\langle [3]P \rangle \arrow["3",r, no head] & E_{6} = E_1/\langle P \rangle .
			\end{tikzcd}
		\end{center}
		By Lemma \ref{lem-2torspt-all-have-2torspt}, all curves have a $2$-torsion point defined over $\Q$. Note that there cannot be a rational $9$-torsion point by Mazur's theorem. Also note that by Lemma \ref{lem-necessity-for-point-rationality} and the fact that $E_{i} \to E_{i+1}$ is a $2$-isogeny for each odd $i$, $E_{i}$ has a point of order $3$ defined over $\Q$ if and only if $E_{i+1}$ has a point of order $3$ defined over $\Q$. Moreover, if $E_{3}$ and $E_{4}$ have points of order $3$ defined over $\Q$, then by Lemma \ref{lem-subsequent-rational-pts}, one and only one pair $(E_{1}, E_{2})$ or $(E_{5},E_{6})$ have points of order $3$ defined over $\Q$. After a renumbering if necessary, we may assume that $E_{5}(\Q)_{\text{tors}} \cong E_{6}(\Q)_{\text{tors}} \cong \Z / 2 \Z$. Thus, there are two possible configurations of $R_6=R_6(18)$-type, namely $([2],[2],[2],[2],[2],[2])$ or $([6],[6],[6],[6],[2],[2])$.
	\end{enumerate}
	
	\section{Elliptic curves with full rational $2$-torsion, and graph of type $T_4$}\label{sec-T4}
	
	In this section we treat the case where $E=E_1/\Q$ has full $2$-torsion defined over $\Q$ and $C(E)=C_2(E)=4$. Then, the graph is of type $T_4$ as follows: 
	\begin{center} 
		\begin{tikzcd}
			& E_{2} = E_1/\langle P_2\rangle                             &       \\
			& E_1 \arrow["2",u, no head] \arrow["2",ld, no head] \arrow["2",rd, no head] &       \\
			E_{3}= E_1/\langle P_2+Q_2\rangle &                                   & E_{4} = E_1/\langle Q_2\rangle
		\end{tikzcd}
	\end{center} 
	
	Let $E_1(\Q)_{\text{tors}} = \langle P_{2}, Q_{2} \rangle \cong \Z / 2 \Z \times \Z / 2 \Z$, i.e., the only $\Q$-rational groups are the point-wise rational groups, $\langle \mathcal{O} \rangle, \langle P_{2} \rangle, \langle Q_{2} \rangle,$ and $\langle P_{2} + Q_{2} \rangle$, and let $E_2=E_1/\langle P_2\rangle$, and $E_3=E_1/\langle P_2+Q_2\rangle$, and $E_{4} = E_1/\langle Q_2\rangle$. By Lemma \ref{lem-2torspt-all-have-2torspt}, $E_2$, $E_3$, and $E_4$ have a rational $2$-torsion point, namely $Q_2+\langle P_2\rangle$, $P_2 + \langle P_2+Q_2\rangle$, and $P_2+\langle Q_2\rangle$, respectively. Moreover, Lemma \ref{Maximality-Of-Rational-2-Power-Groups} says that the torsion subgroup over $\Q$ of $E_k$ is cyclic, for $k=2,3,4$. Since $C_2(E_1)=4$, by \ref{lem-Q-Rational-Grps-Order-8} we conclude that $E_k(\Q)_{\text{tors}}$ is isomorphic to $\Z/2\Z$ or $\Z/4\Z$, for $k=2,3,4$. We claim that not all three can be isomorphic to $\Z/4\Z$. 
	
	Suppose for a contradiction that $E_k(\Q)_{\text{tors}}\cong \Z/4\Z$ for all $k=2,3,4$. Let us write $E_1[4]=\langle P_4,Q_4\rangle$ such that $[2]P_4=P_2$ and $[2]Q_4=Q_2$. Then, the points of $E_4$ that are of order $4$ and double to be rational are multiples of $P_4+\langle Q_2\rangle$ and $P_4+Q_4+\langle Q_2\rangle$. Let us assume that $T=P_4+\langle Q_2\rangle$ is defined over $\Q$ (otherwise, if $P_4+Q_4+\langle Q_2\rangle$ was rational, we can change basis elements to $P_2'=P_2+Q_2$ and arrive to this case). By Lemma \ref{lem-necessity-for-point-rationality}, $\sigma(P_4)-P_4\in \langle Q_2\rangle$, for all $\sigma \in G_\Q$. 
	
	Now, the points on $E_2$ that double to be rational are multiples of $R_1=Q_4+\langle P_2 \rangle$ and $R_2=P_4+Q_4+\langle P_2\rangle$. Similarly, the points on $E_3$ that double to be rational, are multiples of $S_1=P_4+\langle P_2+Q_2 \rangle$ and $S_2=Q_4+\langle P_2+Q_2\rangle$.
	
	\begin{itemize} 
		\item If $T$ and $R_1$ are both rational, then $\sigma(P_4)=P_4+[a]Q_2=P_4+[2a]Q_4$ and $\sigma(Q_4)=Q_4+[2b]P_4$, for some integers $a=a(\sigma),b=b(\sigma)$, for all $\sigma\in G_\Q$. Then, the image of $\rho_{E,4}$ is of the form 
		$$\left\{\left(\begin{array}{cc} 1 & 2b \\ 2a & 1 \end{array} \right) \right\},$$
		where all the matrices have determinant $\equiv 1 \bmod 4$. Since the determinant of $\rho_{E,4}$ needs to be surjective onto $(\Z/4\Z)^\times$, this case is impossible. 
		\item Let $\sigma\in G_\Q$. If $T$ and $S_1$ are rational, then $\sigma(P_4)=P_4+[a]Q_2$ on one hand, and on the other hand $\sigma(P_4)=P_4+[b](P_2+Q_2)=[2b+1]P_4+[b]Q_2$, then we must have $a,b$ even, and it follows that $\sigma(P_4)=P_4$, and therefore $P_4$ is rational on $E_1$, a contradiction because $E_1(\Q)_{\text{tors}} \cong \Z/2\Z \times \Z/2\Z$. Similarly, $R_1$ and $S_2$ cannot be both rational.
		\item Hence, the only remaining possibility is that $T$, $R_2$, and $S_2$ are rational. Let $\sigma\in G_\Q$. In this case $\sigma(P_4)=P_4+[2a]Q_4$ and $\sigma(Q_4)=Q_4+[b](P_2+Q_2)=[2b+1]Q_4+[2b]P_4$. It follows that the image of $\rho_{E,4}$ is the subgroup 
		$$H=\left\{\left(\begin{array}{cc} 1 & 0 \\ 0 & 1 \end{array} \right), \left(\begin{array}{cc} 1 & 2 \\ 2 & 3 \end{array} \right) \right\},$$
		of order $2$ of $\GL(2,\Z/4\Z)$. 
		
		If $E_1/\Q$ has CM, then we saw in Section \ref{sec-CM} that the torsion configuration $([2,2],[4],[4],[4])$ cannot occur (see also \cite{lozano1} for a proof that $H$ cannot be a mod-$4$ image of an elliptic curve defined over $\Q$ with CM). If $E_1/\Q$ does not have CM, then its $2$-adic image $\rho_{E,2^\infty}(G_\Q)$ must reduce modulo $4$ to $H$. We have used Magma (code available at \cite{lozanoweb}) to search through the Rouse--Zureick-Brown database \cite{rouse} of $2$-adic images for non-CM curves (Theorem \ref{thm-rzb}), and no $2$-adic image reduces to a subgroup that is conjugate to $H$ modulo $4$. Thus, there is no such elliptic curve that would have a $([2,2],[4],[4],[4])$ torsion configuration in its $T_4$-isogeny graph.
		
		One can also note that $\left(\begin{array}{cc} 1 & 2 \\ 2 & 3 \end{array} \right)$ is not conjugate by an element of $\operatorname{GL}(2,\Z/4\Z)$ to either $\left(\begin{array}{cc} 1 & 0 \\ 0 & -1 \end{array} \right)$ or $\left(\begin{array}{cc} 1 & 1 \\ 0 & -1 \end{array} \right)$, the elements that represent complex conjugation in $\operatorname{GL}(2,\Z/4\Z)$ (see Lemma 2.8 in \cite{sutherlandzywina}).
		
	\end{itemize}
	The other possible configurations, namely $([2,2],[4],[4],[2])$, $([2,2],[4],[2],[2])$, and $([2,2],[2],[2],[2])$, all occur for CM and non-CM curves, as our tables show.
	
	\begin{remark}
		We thank the referee for the following note. The fact that there is no elliptic curve over $\Q$ whose image of $\rho_{E, 4}$ is conjugate to the subgroup $H\subseteq \GL(2,\Z/4\Z)$ can be deduced from the fact that the corresponding quotient of $X(4)$ is a conic with affine equation $x^2+y^2=-1$ which has no real (therefore no rational) points.
	\end{remark}

	\section{Intermediary Results for Curves with Torsion $\mathbb{Z} / 2 \mathbb{Z} \times \mathbb{Z} / 2^{N} \mathbb{Z}$}\label{sec-morelemmas}

	For the following lemmas, let $E / \Q$ be an elliptic curve such that $C(E) = C_{2}(E) \geq 4$ so that, in particular, $E$ has no $\Q$-rational subgroups of odd prime order. Then, by Lemma \ref{lem-4tors-implies-bicyclic-isog}, we can assume without loss of generality that $E$ has full two torsion defined over $\Q$. Thus, put  $E[2] = E(\Q)[2] = \langle P_2, Q_2\rangle$. For each $m \geq 1$, let $P_{2^{m+1}}, Q_{2^{m+1}} \in E[2^{m+1}]$ such that $[2]P_{2^{m+1}} = P_{2^{m}}$ and $[2]Q_{2^{m+1}} = Q_{2^{m}}$. Note that then $E[2^{m}] = \langle P_{2^{m}}, Q_{2^{m}}\rangle$ for all $m \geq 1$. Suppose $E(\Q)_{tors} = \langle P_2, Q_{2^{N}}\rangle \cong \Z/2\Z \times \Z/2^{N}\Z$ with $N \geq 1$. In addition, we assume that $E$ is not isogenous to an elliptic curve with a torsion subgroup of order $2^{N+2}$.

	We remark here that since $E$ has its full two-torsion defined over $\Q$, every $\Q$-isogenous curve to $E$ has at least one $2$-torsion point defined over $\Q$, by Lemma \ref{lem-2torspt-all-have-2torspt}.
	
	\begin{remark} \label{split cartan 4} Let $E / \Q$ be as above with $E(\Q)_{\text{tors}} \cong \Z / 2 \Z \times \Z / 2^{N} \Z$ for some $N \geq 1.$ Suppose the image of $\rho_{E, 4}$ is a split Cartan subgroup, i.e., the mod-$4$ Galois image is of the form
		
		$$ \left\{\left(\begin{array}{cc}
		\ast & 0 \\
		0 & \ast
		\end{array} \right) \right\}.$$
		We can assume up to a change of basis of $E[4]$, that $\langle P_{4} \rangle$ and $\langle Q_{4} \rangle$ are $\Q$-rational groups of order $4$. (Note that this is enough to conclude that there are eight $\Q$-rational groups, namely, $\{ \mathcal{O} \}, \langle P_{2} \rangle, \langle Q_{2} \rangle, \langle P_{2} + Q_{2} \rangle, \langle P_{4} \rangle, \langle P_{4} + Q_{2} \rangle, \langle Q_{4} \rangle,$ and $\langle P_{2} + Q_{4} \rangle.)$ Let $\phi \colon E \to E'$ be an isogeny with kernel $\langle P_{2} \rangle.$ 
		
		Note that by Lemma \ref{orbit-of-supergroups-of-rational-groups}, for each $\sigma \in G_{\Q},$ there is $c \in \Z / 8 \Z$ such that $\sigma(Q_{8}) - [c]Q_{8} \in \langle P_{2}, Q_{2} \rangle.$ In other words, $\langle \phi(Q_{8}) \rangle$ is a $\Q$-rational group of order $8$ as for each $\sigma \in G_{\Q},$ we have $a, b, c \in \Z / 8 \Z$ such that
		$$\sigma(\phi(Q_{8})) = \phi(\sigma(Q_{8})) = \phi([a]P_{2} + [b]Q_{2} + [c]Q_{8}) = \phi([4b]Q_{8} + [c]Q_{8}) = \phi([4b + c]Q_{8}) = [4b + c]\phi(Q_{8})$$
		Thus, $\langle \phi(Q_{8}) \rangle$ is a $\Q$-rational group of $E'$ of order $8$. By the fact that $\langle P_{4} \rangle$ is $\Q$-rational and Lemma \ref{orbit-of-supergroups-of-rational-groups}, $\sigma(P_{4}) - P_{4} \in \langle P_{2} \rangle$ for all $\sigma \in G_{\Q}$ and thus, by Lemma \ref{lem-necessity-for-point-rationality}, $\phi(P_{4})$ is a point of order $2$ defined over $\Q.$ Moreover, $\phi(P_{4}) \notin \langle \phi(Q_{8}) \rangle$ as if it were, then $\phi(P_{4}) = \phi(Q_{2})$ and thus, $\phi(P_{4}) - \phi(Q_{2}) = \phi(P_{4} - Q_{2}) = \mathcal{O}$, a contradiction. Note that by Lemma \ref{lem-necessity-for-point-rationality}, $\phi(Q_{2})$ is a point of order $2$ defined over $\Q$ that is not equal to $\phi(P_{4})$. Thus, if $E(\Q)_{\text{tors}} \cong \Z / 2 \Z \times \Z / 2 \Z,$ then $E'(\Q)_{\text{tors}} \cong \Z / 2 \Z \times \Z / 2 \Z.$ If $E(\Q)_{\text{tors}} \cong \Z / 2 \Z \times \Z / 4 \Z$ with $Q_{4}$ being a point of order $4$ defined over $\Q$, then $E'(\Q)_{\text{tors}} \cong \Z / 2 \Z \times \Z / 4 \Z$, with $\phi(Q_{4})$ being the point of order $4$ defined over $\Q$ (remember there are no elements in the isogeny class whose torsion subgroup is greater in order than $E(\Q)_{\text{tors}}$).
		
		Thus, $E(\Q)_{\text{tors}} \cong E'(\Q)_{\text{tors}}$ with the main difference being that the maximum isogeny degree amongst the isogenies generated by finite, cyclic $\Q$-rational subgroups of $E$ is $4$ and the maximum isogeny degree amongst the isognies generate by finite, cyclic $\Q$-rational subgroup of $E'$ is $8.$ With this in mind, if $C(E) = C_2(E) = 6$ or $8$ we will always build our isogeny-torsion graph starting with the curve with the largest bicyclic torsion subgroup. Moreover, if two or more curves have the same largest bicyclic torsion subgroup, start with the curve with the largest bicyclic torsion subgroup with largest finite cyclic isogeny degree amongst the elements of the isogeny class. For some of our isogeny-torsion graphs, this curve is not unique, but that is not a problem.
		
		From now on, our base curve $E / \Q$ will always have full two-torsion. For sections 10, 11, and 12, if $C(E) = 4$, then the finite cyclic $\Q$-rational subgroups of $E$ are the four point-wise rational groups generated by elements of $E[2] = \left\{ \mathcal{O}, P_{2}, Q_{2}, P_{2}+Q_{2} \right\}$. For sections 10, 11, and 12, if $C(E) = 6$, then we shall assume, in addition, that $\langle Q_{4} \rangle$ and $\langle P_{2} + Q_{4} \rangle$ are $\Q$-rational. For sections 10, 11, and 12, if $C(E) = 8$, we shall assume, in addition, that $\langle Q_{8} \rangle$ and $\langle P_{2} + Q_{8} \rangle$ are $\Q$-rational.
		
	\end{remark}

	\begin{lemma} \label{lem-Z2xZ2}
		
		Let $E / \Q$ be as above with $E(\Q)_{\text{tors}} \cong \mathbb{Z} / 2 \mathbb{Z} \times \mathbb{Z} / 2^N \mathbb{Z}$ and $N \geq 1,$ and such that $E$ has the largest bicyclic rational torsion subgroup in its isogeny class. In other words, no element of the isogeny class of $E$ has torsion subgroup of order $2^{N+2}$. Suppose $\langle Q_{4} \rangle$ is $\Q$-rational. 
		
		\begin{enumerate}
			
			\item If $\langle Q_{8} \rangle$ is not $\Q$-rational, then $E / \langle Q_{2}\rangle (\Q)_{\text{tors}} \cong \Z / 2 \Z \times \Z / 2 \Z$.
			
			\item If $\langle Q_{8} \rangle$ is $\Q$-rational, then $E / \langle Q_{4}\rangle (\Q)_{\text{tors}} \cong \Z / 2 \Z \times \Z / 2 \Z$.
			
		\end{enumerate}
	\end{lemma}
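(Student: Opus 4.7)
The plan is to treat both parts uniformly: first exhibit two distinct rational $2$-torsion points on the quotient curve, so that its rational torsion is non-cyclic, and then rule out any rational point of order $4$.

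For part (1), let $\phi\colon E\to E' = E/\langle Q_2\rangle$. Since $P_2\in E(\Q)\setminus\langle Q_2\rangle$, Lemma \ref{lem-necessity-for-point-rationality} gives $\phi(P_2)\in E'(\Q)[2]$. Using that $\langle Q_4\rangle$ is $\Q$-rational, Lemma \ref{orbit-of-supergroups-of-rational-groups} yields $\sigma(Q_4)-Q_4\in \langle Q_4\rangle\cap E[2]=\langle Q_2\rangle=\ker\phi$ for all $\sigma$, whence $\phi(Q_4)\in E'(\Q)[2]$ by Lemma \ref{lem-necessity-for-point-rationality}. The two points are distinct (otherwise $Q_4\in E[2]$), so $E'(\Q)[2]\cong(\Z/2\Z)^2$ and $E'(\Q)_{\text{tors}}$ is non-cyclic. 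Part (2) is analogous with $\phi''\colon E\to E''=E/\langle Q_4\rangle$: the point $\phi''(P_2)$ is again rational of order $2$, and writing $\sigma(Q_8)=[c]Q_8$ with $c$ odd (since $\langle Q_8\rangle$ is $\Q$-rational of order $8$) gives $\sigma(Q_8)-Q_8=[c-1]Q_8\in\langle Q_4\rangle=\ker\phi''$, so $\phi''(Q_8)\in E''(\Q)[2]$ is a second, necessarily distinct, rational $2$-torsion point.

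When $N=1$, $|E(\Q)_{\text{tors}}|=4$, so the maximality hypothesis immediately forces any non-cyclic torsion subgroup in the isogeny class to be $(\Z/2\Z)^2$, finishing both parts. For $N\geq 2$, I will suppose a rational $T$ of order $4$ exists on the quotient and split cases on $[2]T$. In the ``$Q$-related'' cases --- $[2]T\in\{\phi(Q_4),\phi(P_2+Q_4)\}$ for part (1), or $[2]T\in\{\phi''(Q_8),\phi''(P_2+Q_8)\}$ for part (2) --- a preimage $X$ satisfies $\langle X\rangle$ cyclic of order $8$ or $16$, respectively, containing the kernel of the isogeny, so Lemma \ref{lem-necessity-for-subgroup-rationality} forces $\langle X\rangle$ to be $\Q$-rational. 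In part (1), the hypothesis that $\langle Q_8\rangle$ is not $\Q$-rational combined with Lemma \ref{lem-evenness-of-C(E)-new} forces $C(E)=6$, leaving no $\Q$-rational cyclic subgroup of order $8$ whatsoever; in part (2), $C(E)=8$ is already saturated by the listed subgroups, so a $\Q$-rational cyclic subgroup of order $16$ would violate Kenku's bound (Theorem \ref{thm-kenku}).

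The remaining case $[2]T=\phi(P_2)$ in part (1), or $\phi''(P_2)$ in part (2), is where I expect the main technical obstacle: no cyclic-subgroup count is available, and one must instead produce a Galois-theoretic contradiction. Here any preimage has the form $X=P_4+\text{(kernel term)}$, and $\sigma(X)-X\in\ker$ combined with $\sigma(P_4)\in P_4+E[2]$ forces $\sigma(P_4)-P_4\in\langle Q_2\rangle$. Writing the image of $\rho_{E,4}$ in the basis $\{P_4,Q_4\}$, every $\sigma$ then has matrix $\smallmat{1}{0}{c}{d}$ with $c\in\{0,2\}$; because $Q_4$ is pointwise rational for $N\geq 2$ (as $Q_4=[2^{N-2}]Q_{2^N}$), we in fact have $d=1$, so $\det\rho_{E,4}\equiv 1\pmod 4$ for every $\sigma$. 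This contradicts the surjectivity of $\det\rho_{E,4}$ onto $(\Z/4\Z)^\times$ guaranteed by the Weil pairing, ruling out $T$ and completing the proof. The dependence of this step on the pointwise rationality of $Q_4$ is exactly why $N=1$ has to be handled separately by the maximality argument rather than by Weil.
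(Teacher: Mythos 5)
Your proposal is correct and follows essentially the same route as the paper's proof: both first produce the two distinct rational $2$-torsion points $\phi(P_2)$ and $\phi(Q_{2^M})$ on the quotient, dispose of $N=1$ by the maximality hypothesis, and then kill rational points of order $4$ by the same two mechanisms --- Lemma \ref{lem-necessity-for-subgroup-rationality} promoting such a point to a forbidden $\Q$-rational cyclic subgroup of order $8$ (resp.\ $16$) in the ``$Q$-related'' cases, and the determinant-surjectivity contradiction of Lemma \ref{lem-determinants} in the case above $\phi(P_2)$. The only blemish is your phrase ``any preimage has the form $X=P_4+(\text{kernel term})$'', which omits preimages such as $P_4+Q_4$ (resp.\ $P_4+Q_8$); since $Q_4$ is pointwise rational for $N\geq 2$ and $\sigma(Q_8)-Q_8\in\langle Q_2\rangle$, the conclusion $\sigma(P_4)-P_4\in\langle Q_2\rangle$ still holds for these, so the argument is unaffected.
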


	\begin{proof}

	    Let $E$ and $Q_{4}$ be as in the statement. Let $M = 3$ if $Q_{8}$ generates a $\Q$-rational group and let $M = 2$ if $Q_{8}$ does not generate a $\Q$-rational group. In other words, $Q_{2^M} = Q_{8}$ if $Q_{8}$ generates a $\Q$-rational group and $Q_{2^M} = Q_{4}$ if $Q_{8}$ does not generate a $\Q$-rational group. In either case, $\left\langle Q_{2^{M}} \right\rangle$ and $\langle P_2+Q_{2^M}\rangle$ are the largest finite cyclic $\Q$-rational subgroups of $E$ that contain $Q_{4}$. Let $Q_{2^{M-1}} = [2]Q_{2^{M}}$. In other words, $Q_{2^{M-1}} = Q_{2}$ if $Q_{8}$ does not generate a $\Q$-rational group and $Q_{2^{M-1}} = Q_{4}$ if $Q_{8}$ does generate a $\Q$-rational group. We claim that $E / \left\langle Q_{2^{M-1}} \right\rangle(\Q)_{\text{tors}} \cong \Z / 2 \Z \times \Z / 2 \Z$. We first show that $E / \left\langle Q_{2^{M-1}} \right\rangle$ has full two-torsion.
	    
	    Let $\phi\colon E \to E'=E/\langle Q_{2^{M-1}} \rangle$. Then,  $\phi(P_{2})$ is a rational point of order $2$ by Lemma \ref{lem-necessity-for-point-rationality}. From $\langle Q_{2^{M}} \rangle$ being $\Q$-rational and Lemma \ref{orbit-of-supergroups-of-rational-groups}, it follows that $\sigma(Q_{2^{M}}) - Q_{2^{M}} \in \langle Q_{2^{M-1}} \rangle$ for all $\sigma \in G_{\Q}$. By Lemma \ref{lem-necessity-for-point-rationality}, $\phi(Q_{2^M})$ is a rational point of order $2$. Moreover, $\phi(Q_{2^M}) \neq \phi(P_{2})$ as then $Q_{2^M} - P_{2} \in \langle Q_{2} \rangle$ but $Q_{2^{M}} - P_{2}$ has order $4$ or $8$ and $Q_{2}$ has order $2$. It follows that all $2$-torsion elements of $E'$, namely, $\mathcal{O}$, $\phi(P_{2})$, $\phi(Q_{2^M})$, and $\phi(P_{2} + Q_{2^M})$ are defined over $\Q$.
		
		If $E(\Q)_{\text{tors}} \cong \Z / 2 \Z \times \Z / 2 \Z$, then we are done because we assumed we had no isogenous elliptic curve with bicyclic torsion group bigger than the torsion group of the curve in our hypothesis (and so, we must have $E'[2] = E'(\Q)_{\text{tors}} \cong \Z / 2 \Z \times \Z / 2 \Z$). Otherwise, suppose $Q_{4}$ is defined over $\Q$, so that $N\geq 2$. We analyze if any point of $E'$ of order $4$ is rational:
		
		Let $\phi(P)$ and $\phi(P')$ be any pair of unequal elements of $E'$ of order $2$. Remember both of these points of order $2$ are defined over $\Q$. Let $B \in E$ such that $[2]B = P$. We claim that the two subgroups of $E'$ of order $4$ that contain $\phi(P)$ are $\left\langle \phi(B) \right\rangle$ and $\left\langle \phi(P'+B) \right\rangle$. Indeed, both $\left\langle \phi(B) \right\rangle$ and $\left\langle \phi(P'+B) \right\rangle$ are order $4$ and contain $\phi(P)$. If we assume that $\left\langle \phi(B) \right\rangle$ and $\left\langle \phi(P'+B) \right\rangle$ are equal, then either $\phi(B) = \phi(P'+B)$ and hence, $\phi(P') = \mathcal{O}$, a contradiction, or $[3]\phi(B) = \phi([3]B) = \phi(P'+B)$ and so, $\phi([2]B) = \phi(P) = \phi(P')$, another contradiction.
		
		Now we claim that neither $\phi(B)$ nor $\phi(P'+B)$ are defined over $\Q$. By the fact that $\phi(P')$ is defined over $\Q$, showing $\phi(B)$ is not defined over $\Q$ is enough. If $\phi(B)$ is defined over $\Q$, then $\sigma(B) - B \in \left\langle Q_{2^{M-1}} \right\rangle$ for all $\sigma \in G_{\Q}$. If $P = P_{2}$, then by Lemma \ref{orbit-of-supergroups-of-rational-groups}, $\sigma(B) - B \in \left\langle Q_{2} \right\rangle$ for all $\sigma \in G_{\Q}$. Thus, $E[4] = \left\langle B, Q_{4} \right\rangle$ and the fact that $Q_{4}$ is  defined over $\Q$, this causes a contradiction as noted in Lemma \ref{lem-determinants}. If $P = Q_{2^{M}}$ or $P_{2} + Q_{2^{M}}$, then $Q_{2^{M-1}} \in \left\langle B \right\rangle$ and hence, by Lemma \ref{lem-necessity-for-subgroup-rationality}, we have that $\left\langle B \right\rangle$ is $\Q$-rational but that contradicts the condition that $\left\langle Q_{2^{M}} \right\rangle$ and $\langle P_2+Q_{2^M}\rangle$ are the largest finite cyclic $\Q$-rational subgroups of $E$ that contain $Q_{4}$. Hence, none of the groups that contain $\phi(P)$ of order $4$ are defined over $\Q$. As $P$ and $P'$ were arbitrary, we have proven that $E / \left\langle Q_{2^{M}} \right\rangle(\Q)_{\text{tors}} \cong \Z / 2 \Z \times \Z / 2 \Z$ and the proof is complete.
		
	\end{proof}
	
	\begin{lemma} \label{lem-Z2}
		
		Let $E / \Q$ be an elliptic curve having the largest bicyclic torsion subgroup and maximum isogeny degree amongst the isogenies generated by finite cyclic $\Q$-rational groups amongst all the elements of the isogeny class. Suppose $E(\Q)_{\text{tors}} \cong \Z / 2 \Z \times \Z / 4 \Z$ or $\Z / 2 \Z \times \Z / 8 \Z$.
		\begin{enumerate}
			\item If $\langle Q_{8} \rangle$ is $\Q$-rational, then $E / \langle Q_{8}\rangle (\Q)_{\text{tors}}$ and $E /\langle  P_{2} + Q_{8}\rangle  (\Q)_{\text{tors}}$ are cyclic of order $2$. 
			
			\item If $\langle Q_{8} \rangle$ is not $\Q$-rational, then $E / \langle Q_{4}\rangle (\Q)_{\text{tors}}$ and $E / \langle P_{2} + Q_{4}\rangle (\Q)_{\text{tors}}$ are cyclic of order $2$.
		\end{enumerate}
	\end{lemma}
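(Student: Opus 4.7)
The plan is to model the proof on Lemma~\ref{lem-Z2xZ2}, treating case~(1) in detail with $\phi\colon E\to E'=E/\langle Q_{8}\rangle$; case~(2) with $\phi\colon E\to E/\langle Q_{4}\rangle$ is analogous, and the statements for $E/\langle P_{2}+Q_{2^{M}}\rangle$ follow by exchanging the roles of $Q_{2^{M}}$ and $P_{2}+Q_{2^{M}}$. First I would note that $\phi(P_{2})\in E'(\Q)$ has order $2$, since $P_{2}\in E(\Q)\setminus\langle Q_{2^{M}}\rangle$. The remaining nontrivial points of $E'[2]$ are $\phi(Q_{2^{M+1}})$ and $\phi(P_{2}+Q_{2^{M+1}})$, where $[2]Q_{2^{M+1}}=Q_{2^{M}}$. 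If either were rational, Lemma~\ref{lem-necessity-for-subgroup-rationality} would produce a $\Q$-rational cyclic subgroup of $E$ of order $2^{M+1}$; combined with Lemma~\ref{lem-evenness-of-C(E)-new}, this contradicts the hypothesis of case~(2), while in case~(1) it forces $C_{2}(E)\ge 10$, violating Kenku's bound (Theorem~\ref{thm-kenku}). Hence $E'[2](\Q)=\langle\phi(P_{2})\rangle$ and, because $E$ has no $\Q$-rational subgroups of odd prime order, neither does $E'$, so $E'(\Q)_{\text{tors}}$ is cyclic of $2$-power order.

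To rule out a rational point of order $4$ in $E'$, I suppose for contradiction that $T'=\phi(B)\in E'(\Q)$ with $[2]T'=\phi(P_{2})$, so $[2]B=P_{2}+[j]Q_{2^{M}}$ for some $j$. Translating $B$ by an appropriate element of $\ker(\phi)=\langle Q_{2^{M}}\rangle$ (which leaves $\phi(B)$ unchanged) reduces to the cases $j=0$ and $j=1$. For $j=0$ I may take $B=P_{4}$ (the other preimages of $P_{2}$ differ by a rational $2$-torsion element, so they give the same $\phi(B)$ up to a rational shift). The rationality of $\phi(P_{4})$ forces $\sigma(P_{4})-P_{4}\in\langle Q_{2^{M}}\rangle\cap E[2]=\langle Q_{2}\rangle$ for every $\sigma\in G_{\Q}$. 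In the basis $(P_{4},Q_{4})$ of $E[4]$, with $Q_{4}\in E(\Q)$ (since by hypothesis $\Z/2\Z\times\Z/4\Z\subseteq E(\Q)_{\text{tors}}$), the matrix of $\rho_{E,4}(\sigma)$ then takes the form $\left(\begin{smallmatrix} 1 & 0 \\ \ast & 1\end{smallmatrix}\right)$ with determinant identically $1$, contradicting the surjectivity of the cyclotomic character onto $(\Z/4\Z)^{\times}$ via Lemma~\ref{lem-determinants}.

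The crux is the $j=1$ subcase, where $B$ has order $2^{M+1}$ and $[2]B=P_{2}+Q_{2^{M}}$. Here the plan is to show that the rationality of $\phi(B)$ already forces $\langle B\rangle$ itself to be $\Q$-rational. Choose a basis $(P_{2^{M+1}},Q_{2^{M+1}})$ of $E[2^{M+1}]$ with $[2^{M}]P_{2^{M+1}}=P_{2}$ and $[2]Q_{2^{M+1}}=Q_{2^{M}}$, so that $B=[2^{M-1}]P_{2^{M+1}}+Q_{2^{M+1}}$. Writing $\rho_{E,2^{M+1}}(\sigma)=\left(\begin{smallmatrix} a & c \\ b & d\end{smallmatrix}\right)$ in this basis, the rationality of $P_{2}$ and $Q_{2}$ forces $a,d$ odd and $b,c$ even; the $\Q$-rationality of $\langle Q_{2^{M}}\rangle$ forces $c\equiv 0\pmod{2^{M}}$; and the rationality of $Q_{4}$ forces $d\equiv 1\pmod{4}$. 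A direct expansion yields
\[
\sigma(B)-B=[2^{M-1}(a-1)+c]P_{2^{M+1}}+[2^{M-1}b+d-1]Q_{2^{M+1}}.
\]
The condition $\sigma(B)-B\in\langle Q_{2^{M}}\rangle$ pins down the $P$-coordinate, and the three constraints above imply $2^{M-1}b+d-1\equiv 0\pmod{4}$; hence $\sigma(B)-B\in\langle[2]Q_{2^{M}}\rangle=\langle Q_{2^{M-1}}\rangle\subseteq\langle B\rangle$, using $[4]B=Q_{2^{M-1}}$. Thus $\sigma(B)\in\langle B\rangle$ for every $\sigma$, and $\langle B\rangle$ is a $\Q$-rational cyclic subgroup of $E$ of order $2^{M+1}$.

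This produces the required contradiction: in case~(1), $\langle B\rangle$ has order $16$, so $C_{2}(E)\ge 9$, violating Kenku's bound; in case~(2), $\langle B\rangle$ is a $\Q$-rational cyclic subgroup of $E$ of order $8$, but the $T_{6}$ isogeny graph forced by the hypothesis that $\langle Q_{8}\rangle$ is not $\Q$-rational has $C_{2}(E)=6$ and in particular no $\Q$-rational cyclic subgroup of order $8$. The main obstacle is the $j=1$ subcase, specifically the arithmetic check that the $Q$-coordinate of $\sigma(B)-B$ is divisible by $4$; this relies crucially on combining both $Q_{4}\in E(\Q)$ and the $\Q$-rationality of $\langle Q_{2^{M}}\rangle$.
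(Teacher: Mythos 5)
Your proposal is correct and follows essentially the same strategy as the paper's proof: cyclicity of the quotient's torsion via the maximality of $\langle Q_{2^{M}}\rangle$, then elimination of the two order-$4$ subgroups above $\phi(P_{2})$ — one by the determinant/Weil-pairing argument of Lemma \ref{lem-determinants}, the other by showing its rationality would force a $\Q$-rational cyclic subgroup of order $2^{M+1}$, contradicting maximality (equivalently, Kenku's bound). Your explicit matrix computation mod $2^{M+1}$ in the $j=1$ subcase is just a coordinate version of the paper's ``multiply by $|B|/4$'' refinement, so the two proofs are the same in substance.
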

	
	\begin{proof}
		
		Let $E / \Q$ be an elliptic curve with $E[4]=\langle P_4,Q_4\rangle$, such that $P_2=[2]P_4$ and $Q_{4}$ are defined over $\Q$. Let $Q_{2^M} = Q_{8}$ if $Q_{8}$ generates a $\Q$-rational group and let $Q_{2^M} = Q_{4}$ if $Q_{8}$ does not generate a $\Q$-rational group. In either case, $\left\langle Q_{2^M} \right\rangle$ and $\left\langle P_{2} + Q_{2^M} \right\rangle$ are the largest finite cyclic  $\Q$-rational subgroups of $E$ by order. To generalize, let $A = Q_{2^M}$ or $P_{2} + Q_{2^M}$. We need to prove that $E / \left\langle A \right\rangle(\Q)_{\text{tors}} \cong \Z / 2 \Z$. By Lemma \ref{Maximality-Of-Rational-2-Power-Groups}, $E / \langle A \rangle (\Q)_{\text{tors}}$ is cyclic. Let $B \in E$ such that $[2]B = A$. Note that all elements of $\left\langle B \right\rangle$ of order at most $4$ are defined over $\Q$.
		
		Let $\phi \colon E \to E' = E /\langle A \rangle$. Then, $\phi(P_{2})$ is the point of order $2$ defined over $\Q$ on $E'$ by the third statement of Lemma \ref{lem-necessity-for-point-rationality}. The subgroups of order $4$ of $E'$ that contain $\phi(P_{2})$ are $\langle \phi(P_{4}) \rangle$ and $\langle \phi(P_{4} + B) \rangle$. The groups $\left\langle \phi(P_{4}) \right\rangle$ and $\left\langle \phi(P_{4}+B) \right\rangle$ are in fact distinct because if not, then $\phi(P_{4}) = \phi(P_{4} + B)$ and hence, $\phi(B) = \mathcal{O}$ so $B \in \left\langle A \right\rangle$ or $[3]\phi(P_{4}) = \phi([3]P_{4}) = \phi(P_{4} + B)$ and so, $\phi([2]P_{4}) = \phi(P_{2}) = \phi(B)$. This means that $B - P_{2} \in \left\langle A \right\rangle$. But both $\left\langle B \right\rangle$ and $\left\langle B - P_{2} \right\rangle$ properly contain $A$!
		
		Now, if $\phi(P_{4})$ were rational, then by Lemma \ref{lem-necessity-for-point-rationality}, we have $\sigma(P_{4}) - P_{4} \in \langle A \rangle$ for all $\sigma \in G_{\Q}$. By Lemma \ref{orbit-of-supergroups-of-rational-groups}, we have $\sigma(P_{4}) - P_{4} \in \langle Q_{2} \rangle$ for all $\sigma \in G_{\Q}$. But as $Q_{4}$ is defined over $\Q$, this contradicts Lemma \ref{lem-determinants}. If instead $\phi(P_{4} + B)$ were defined over $\Q$, then
		
		\vspace{5mm}
		
		\begin{center} $\sigma(P_{4} + B) - (P_{4} + B) \in \langle A \rangle$ for all $\sigma \in G_{\Q}$. \end{center}
		
		\vspace{5mm}
		
		Now multiply through by $\frac{\lvert B \rvert}{4}$. Note that $\left[\frac{\lvert B \rvert}{4}\right] B$ is an element of $\left\langle B \right\rangle$ of order $4$, hence, is defined over $\Q$. Note that $\left[\frac{\lvert B \rvert}{4}\right] P_{4} = P_{2}$ or $\mathcal{O}$ and thus, is also defined over $\Q$. Finally, note that $\left[\frac{\lvert B \rvert}{4}\right]A = \left[\frac{\lvert B \rvert}{4}\right][2]B = \left[\frac{\lvert B \rvert}{2}\right]B$ is an element of $\left\langle B \right\rangle$ of order $2$. Thus, $\frac{\lvert B \rvert}{4}(\sigma(P_{4}) + B) - (P_{4} + B) = \mathcal{O}$ while $\left[\frac{\lvert B \rvert}{4}\right]A$ is non-zero, so a refinement of the above equation is
		
		\vspace{5mm}
		
		\begin{center} $\sigma(P_{4} + B) - (P_{4} + B) \in \langle [2]A \rangle$ for all $\sigma \in G_{\Q}$. \end{center}
		
		\vspace{5mm}
		
		Note that $[2]A = [4]B = [4](P_{4} + B)$ so by Lemma \ref{lem-necessity-for-subgroup-rationality}, we would have that $\langle P_{4} + B \rangle$ is $\Q$-rational but this is a contradiction as $\left\langle A \right\rangle$ and $\langle P_2+A\rangle$ should be the largest finite cyclic  $\Q$-rational subgroup of $E$ by order. Thus, we can conclude $E / \left\langle Q_{4} \right\rangle (\Q)_{\text{tors}} \cong E / \left\langle P_{2} + Q_{4} \right\rangle (\Q)_{\text{tors}} \cong \Z / 2 \Z$ when $\left\langle Q_{8} \right\rangle$ is not $\Q$-rational and $E / \left\langle Q_{8} \right\rangle (\Q)_{\text{tors}} \cong E / \left\langle P_{2} + Q_{8} \right\rangle (\Q)_{\text{tors}} \cong \Z / 2 \Z$ when $\left\langle Q_{8} \right\rangle$ is $\Q$-rational.
	
	\end{proof}
	%% Garen's proof of part (2) is at the end of this document!
	
	\begin{lemma} \label{lem-Z2xZ4} Let $E(\Q)_{\text{tors}} \cong \Z / 2 \Z \times \Z / 4 \Z$ or $\Z / 2 \Z \times \Z / 8 \Z$, with $E[8]=\langle P_8,Q_8\rangle$ (i.e., $Q_{4}=[2]Q_8$ is defined over $\Q$ but $Q_{8}$ may or may not be). If $\langle Q_{8} \rangle$ is a $\Q$-rational group, then $E/\langle Q_{2}\rangle (\Q)_{\text{tors}} \cong \mathbb{Z} / 2 \mathbb{Z} \times \mathbb{Z} / 4 \mathbb{Z}$.
	\end{lemma}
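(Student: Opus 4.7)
The plan is to study the isogeny $\phi\colon E\to E'=E/\langle Q_2\rangle$, show directly that $E'(\Q)_{\text{tors}}$ contains $\Z/2\Z\times\Z/4\Z$, and then rule out the only larger Mazur option, namely $\Z/2\Z\times\Z/8\Z$, via a subgroup count.

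First I would verify that $\phi(P_2)$ and $\phi(Q_4)$ are rational $2$-torsion points of $E'$. Both are rational by Lemma \ref{lem-necessity-for-point-rationality}(3), both have order $2$ (since $\ker\phi=\langle Q_2\rangle$ contains neither $P_2$ nor $Q_4$, but does contain $[2]Q_4=Q_2$), and they are distinct because $P_2-Q_4$ has order $4$ and hence does not lie in $\langle Q_2\rangle$. Next I would show $\phi(Q_8)$ is a rational point of order $4$ in $E'$: the order is $8/|\langle Q_8\rangle\cap\langle Q_2\rangle|=4$, while the $\Q$-rationality of $\langle Q_8\rangle$ gives $\sigma(Q_8)=[c_\sigma]Q_8$ for some $c_\sigma\in\Z/8\Z$, and the rationality of $Q_4=[2]Q_8$ forces $c_\sigma\equiv 1\pmod 4$. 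Hence $\sigma(Q_8)-Q_8=[c_\sigma-1]Q_8\in\{\mathcal{O},[4]Q_8\}=\langle Q_2\rangle=\ker\phi$, and Lemma \ref{lem-necessity-for-point-rationality}(1) yields $\phi(Q_8)\in E'(\Q)$. Since $[2]\phi(Q_8)=\phi(Q_4)\neq\phi(P_2)$, we obtain $\langle\phi(P_2),\phi(Q_8)\rangle\cong\Z/2\Z\times\Z/4\Z\subseteq E'(\Q)_{\text{tors}}$.

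By Mazur's theorem it remains to exclude $E'(\Q)_{\text{tors}}\cong\Z/2\Z\times\Z/8\Z$. I would first enumerate eight distinct $\Q$-rational cyclic $2$-power subgroups of $E$: the four pointwise-rational subgroups $\{\mathcal{O}\},\langle P_2\rangle,\langle Q_2\rangle,\langle P_2+Q_2\rangle$; the pair $\langle Q_4\rangle,\langle P_2+Q_4\rangle$ (the latter by Lemma \ref{lem-evenness-of-C(E)-new} applied to $\langle Q_2\rangle$); and the pair $\langle Q_8\rangle,\langle P_2+Q_8\rangle$ (the latter by Lemma \ref{lem-evenness-of-C(E)-new} applied to $\langle Q_4\rangle$, using the hypothesis that $\langle Q_8\rangle$ is $\Q$-rational). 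Kenku's bound $C_2(E)\le 8$ from Theorem \ref{thm-kenku} then forces these to be all of them.

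The hard part is then the following obstruction. If $R\in E'(\Q)$ had order $8$, then $\langle R\rangle$ would be a $\Q$-rational cyclic subgroup of $E'$ of order $8$, and its preimage $\phi^{-1}(\langle R\rangle)$ would be a $\Q$-stable subgroup of $E$ of order $16$ containing $\langle Q_2\rangle$, hence isomorphic as an abstract group to $\Z/16\Z$ or $\Z/8\Z\times\Z/2\Z$. In the cyclic case $\phi^{-1}(\langle R\rangle)$ is itself a new $\Q$-rational cyclic $2$-power subgroup of $E$, raising $C_2(E)$ to at least $9$ and contradicting the enumeration above. In the non-cyclic case, choosing a generator $R'=[a]P_8+[b]Q_8$ of order $8$ with $\phi(R')=R$ and $a$ odd, and expanding the condition $\sigma(R')-R'\in\langle Q_2\rangle$ using $\sigma(Q_8)=[c_\sigma]Q_8$ together with the rationality of $P_2$ and $Q_4$, one forces $\rho_{E,8}(\sigma)$ to be upper triangular with $(1,1)$-entry $\equiv 1\pmod 8$ for every $\sigma\in G_\Q$. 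Consequently $\det\rho_{E,8}$ takes values only in $\{1,5\}\pmod 8$, contradicting the surjectivity of the mod-$8$ cyclotomic character given by the Weil pairing. Either way we reach a contradiction, so $E'(\Q)_{\text{tors}}\cong\Z/2\Z\times\Z/4\Z$ as claimed.
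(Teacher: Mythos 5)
Your proof is correct. The positive half (producing $\langle\phi(P_2),\phi(Q_8)\rangle\cong\Z/2\Z\times\Z/4\Z$ in $E'=E/\langle Q_2\rangle$ from the congruence $c_\sigma\equiv 1\bmod 4$) is the same computation the paper does via Lemmas \ref{orbit-of-supergroups-of-rational-groups} and \ref{lem-necessity-for-point-rationality}. For the exclusion of $\Z/2\Z\times\Z/8\Z$ the underlying mechanism is also the same --- a rational point of order $8$ on $E'$ would force a cyclic $\Q$-rational subgroup of order $16$ on $E$, which is incompatible with Kenku's bound $C_2(E)\le 8$ --- but you organize it around the preimage $\phi^{-1}(\langle R\rangle)$, which obliges you to dispose of the possibility that this preimage is $\Z/8\Z\times\Z/2\Z$ rather than cyclic; you do so with a correct determinant obstruction on $\rho_{E,8}$. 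The paper instead lists the candidate order-$8$ points explicitly as $\phi(B)$ and $\phi(P_2+B)$ with $[2]B=Q_8$ or $P_2+Q_8$ (legitimate because any rational point of order $8$ doubles to one of the four known rational points of order $4$), and since $Q_2=[8]B$ lies in $\langle B\rangle$, Lemma \ref{lem-necessity-for-subgroup-rationality} immediately yields a cyclic $\Q$-rational group of order $16$; so the non-cyclic case never arises there (it is in fact vacuous, though your argument does not need to know this). The paper's version is slightly leaner; yours is self-contained and avoids having to justify that the listed points exhaust the candidates.
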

	
	\begin{proof}
		Let $E/\Q$ be an elliptic curve as in the statement. Suppose $Q_{4}$ is defined over $\Q$ and $Q_{8}$ generates a $\Q$-rational group of order $8$ that is not necessarily point-wise rational. By Lemma \ref{orbit-of-supergroups-of-rational-groups}, $\sigma(Q_{8}) - Q_{8} \in \left\langle Q_{4} \right\rangle$ for all $\sigma \in G_{\Q}$. Let $\phi \colon E \to E / \left\langle Q_{2} \right\rangle$ be an isogeny with kernel $\left\langle Q_{2} \right\rangle$, then by the third statement of Lemma \ref{lem-necessity-for-point-rationality}, the point $\phi(P_{2})$ is rational of order $2$ and by the first statement of Lemma \ref{lem-necessity-for-point-rationality}, $\phi(Q_{8})$ is a point of order $4$ defined over $\Q$. $\phi(Q_{8})$ and $\phi(P_{2} + Q_{8})$ are the rational points of order $4$ that generate the two distinct point-wise rational groups of order $4$.
		
		To generalize, let $A$ be either $Q_{8}$ or $P_{2} + Q_{8}$. The subgroups of $E'$ of order $8$ that contain $\phi(A)$ are $\langle \phi(B) \rangle$ and $\langle \phi(P_{2} + B) \rangle$ where $B \in E$ such that $[2]B = A$. To prove that $\phi(B)$ and $\phi(P_{2}+B)$ in fact generate distinct groups of order $8$, assume there is an integer $n \in \Z$ such that $[n]\phi(B) = \phi([n]B) = \phi(P_{2}+B)$. Then $P_{2}+B - [n]B = P_{2} + [1-n]B \in \left\langle Q_{2} \right\rangle$. Multiplying through by $2$ gets us $[1-n]A = \mathcal{O}$ and hence, $8$ divides $1-n$ and so, $P_{2} + [1-n]B = P_{2}$ or $P_{2} + Q_{2}$, neither of which are elements of $\left\langle Q_{2} \right\rangle$.
		
		Now to prove that $\phi(B)$ and $\phi(P_{2}+B)$ are not defined over $\Q$. By the rationality of $\phi(P_{2})$, all we have to do is check the rationality of $\phi(B)$. Note that $Q_{2} \in \left\langle B \right\rangle$ so if $\phi(B)$ is defined over $\Q$, then $\left\langle B \right\rangle$ is a $\Q$-rational subgroup of $E$ of order $16$ by Lemma \ref{lem-necessity-for-subgroup-rationality}. This means that the groups generated by $[2^{a}]P_{2} + [2^{b}]B$ for $0 \leq a \leq 1$ and $0 \leq b \leq 4$ are all $\Q$-rational and pairwise distinct. This gives us at least $10$ $\Q$-rational groups, contradicting Theorem \ref{thm-kenku}.
	\end{proof}
	
	\begin{lemma} \label{Not-too-many-Z/4}
		
		Let $E / \Q$ be a rational elliptic curve with $E(\Q)_{\text{tors}} \cong \Z / 2 \Z \times \Z / 2 \Z$, with $E[4]=\langle P_4,Q_4\rangle$ and suppose $\langle Q_{4} \rangle$ is $\Q$-rational, with $C(E)=C_2(E)=6$ or $8$. Further, assume that $E$ has the largest bicyclic torsion subgroup and isogeny degree in its class (see Remark \ref{split cartan 4}).
		\begin{enumerate}
			\item $E / \langle P_{2}\rangle  (\Q)_{\text{tors}}$ and $E / \langle P_{2} + Q_{2}\rangle (\Q)_{\text{tors}}$ are cyclic of order $2$ or $4$ but not both order $4$.
			\item If $\langle Q_{8} \rangle$ is not $\Q$-rational, then $E / \langle Q_{4}\rangle  (\Q)_{\text{tors}}$ and $E /\langle  P_{2} + Q_{4}\rangle (\Q)_{\text{tors}}$ are cyclic of order $2$ or $4$ but not both order $4$.
			\item If $\langle Q_{8} \rangle$ is $\Q$-rational, then $E / \langle Q_{8}\rangle  (\Q)_{\text{tors}}$ and $E / \langle P_{2} + Q_{8}\rangle (\Q)_{\text{tors}}$ are cyclic of order $2$ or $4$ but not both order $4$.
		\end{enumerate}
	\end{lemma}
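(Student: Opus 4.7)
The plan is to handle parts (1), (2), and (3) in parallel using a two-step strategy: first establish that each quotient has cyclic torsion of order $2$ or $4$, then rule out both being order $4$ via the mod-$4$ Galois representation.

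For cyclicity and the order bound, I would note that under the hypothesis $C(E)=C_2(E)\in\{6,8\}$ and Theorem \ref{thm-kenku}, the full list of $\Q$-rational cyclic $2$-power subgroups of $E$ is exhausted by $\{\mathcal{O}\},\langle P_2\rangle,\langle Q_2\rangle,\langle P_2+Q_2\rangle,\langle Q_4\rangle,\langle P_2+Q_4\rangle$, together with $\langle Q_8\rangle,\langle P_2+Q_8\rangle$ when $C_2(E)=8$. Consequently, for each of the six kernels appearing in (1)--(3), the two cyclic supergroups of double order containing the generator of that kernel lie outside this list and hence are not $\Q$-rational, so Lemma \ref{Maximality-Of-Rational-2-Power-Groups} yields that the corresponding quotient torsion is cyclic. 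The order is a power of $2$ (since $C(E)=C_2(E)$ forbids odd torsion), at least $2$ by Lemma \ref{lem-2torspt-all-have-2torspt}, and at most $4$ because a rational $\Z/8\Z$-subgroup on any quotient would, by Lemma \ref{lem-Q-Rational-Grps-Order-8}, force an isogenous curve with bicyclic torsion $\Z/2\Z\times\Z/4\Z$, contradicting the maximality of $E$'s bicyclic torsion in its isogeny class.

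For the key step I would use the explicit form of the mod-$4$ representation. Because $E[2]\subset E(\Q)$ and $\langle Q_4\rangle$ is $\Q$-rational, in the basis $\{P_4,Q_4\}$ every $\sigma\in G_\Q$ acts as
$$\rho_{E,4}(\sigma)=\begin{pmatrix}1+2\alpha(\sigma) & 0\\ 2\beta(\sigma) & 1+2\gamma(\sigma)\end{pmatrix}$$
for some functions $\alpha,\beta,\gamma\colon G_\Q\to\Z/2\Z$, and the surjectivity of $\det\rho_{E,4}$ onto $(\Z/4\Z)^\times=\{1,3\}$ via the Weil pairing forces $\alpha+\gamma\bmod 2$ to attain both values. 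Using Lemma \ref{lem-necessity-for-point-rationality}, I would enumerate the points $R\in E$ whose image in each quotient could yield a rational point of order $4$; candidates of the form $R\in Q_4+E[2]$ that arise (most notably in part (1)) get eliminated immediately, since their rationality would force $\sigma(Q_4)-Q_4$ to lie in the trivial intersection of $\langle Q_2\rangle$ with the kernel, making $Q_4$ rational and contradicting $E(\Q)_{\text{tors}}\cong\Z/2\Z\times\Z/2\Z$. The surviving candidates give an explicit linear condition on $(\alpha,\beta,\gamma)$ in each quotient.

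For (1), the conditions read $\beta\equiv\gamma$ (from $R=P_4+Q_4$ for $E/\langle P_2\rangle$) and $\alpha\equiv\beta$ (from $R=P_4$ for $E/\langle P_2+Q_2\rangle$); both holding simultaneously forces $\alpha\equiv\gamma$ for all $\sigma$, whence $\det\rho_{E,4}\equiv 1\pmod 4$ always, contradicting determinant surjectivity. For (2) and (3), both quotient conditions collapse to $\alpha\equiv 0$ for all $\sigma$, because each kernel meets $E[2]$ exactly in $\{\mathcal{O},Q_2\}$; combined with the always-true $\sigma(Q_4)-Q_4\in\langle Q_2\rangle$, Lemma \ref{lem-necessity-for-point-rationality} then shows that the $2$-isogeny $\psi\colon E\to E/\langle Q_2\rangle$ sends both $P_4$ and $Q_4$ to $\Q$-rational independent points of orders $4$ and $2$, producing $\Z/2\Z\times\Z/4\Z\subseteq E/\langle Q_2\rangle(\Q)_{\text{tors}}$ and contradicting the maximality hypothesis. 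The main bookkeeping obstacle will be the careful enumeration of the candidate lifts $R$ in each quotient and of which order-$4$ and order-$8$ cyclic subgroups of $E$ are actually $\Q$-rational under the hypotheses, but once that is done the argument reduces in each case to a clean linear algebra computation on $\Z/2\Z$-valued cocycles.
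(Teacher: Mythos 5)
Your treatment of part (1) is correct and is essentially the paper's own argument: for the kernels $\langle P_2\rangle$ and $\langle P_2+Q_2\rangle$ the two cyclic order-$4$ subgroups of the quotient containing the rational $2$-torsion point are generated by images of points of $E[4]$, the $Q_4$-type lifts are excluded because $\langle Q_4\rangle$ is already $\Q$-rational, and the two surviving rationality conditions together force $\det\rho_{E,4}\equiv 1\pmod 4$. Your preliminary reductions (cyclicity via Lemma \ref{Maximality-Of-Rational-2-Power-Groups}, order in $\{2,4\}$ via Lemmas \ref{lem-2torspt-all-have-2torspt} and \ref{lem-Q-Rational-Grps-Order-8} plus maximality) are also the paper's.

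There is, however, a genuine gap in parts (2) and (3). For the kernels $\langle Q_{2^M}\rangle$ and $\langle P_2+Q_{2^M}\rangle$ (with $M=2$ in part (2) and $M=3$ in part (3)), the two cyclic order-$4$ subgroups of the quotient containing $\phi(P_2)$ are \emph{not} both generated by images of points of $E[4]$: they are $\langle \phi(P_4)\rangle$ and $\langle \phi(P_4+Q_{2^{M+1}})\rangle$ for the first kernel, and $\langle \phi'(P_4)\rangle$ and $\langle \phi'(Q_{2^{M+1}})\rangle$ for the second, where $Q_{2^{M+1}}$ has order $8$ or $16$. Your argument correctly disposes of $\phi(P_4)$ and $\phi'(P_4)$ (each would give $\sigma(P_4)-P_4\in\langle Q_2\rangle$, hence $\Z/2\Z\times\Z/4\Z\subseteq E/\langle Q_2\rangle(\Q)_{\text{tors}}$, contradicting maximality), but the rationality of $\phi(P_4+Q_{2^{M+1}})$ is the condition $\sigma(P_4+Q_{2^{M+1}})-(P_4+Q_{2^{M+1}})\in\langle Q_{2^M}\rangle$; writing $\sigma(Q_{2^{M+1}})=[c]Q_{2^{M+1}}+[u]P_2+[v]Q_2$ (Lemma \ref{orbit-of-supergroups-of-rational-groups}), this unwinds to $u\equiv\alpha\bmod 2$, a constraint on the mod-$2^{M+1}$ image that does not collapse to $\alpha\equiv 0$, and similarly for $\phi'(Q_{2^{M+1}})$. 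So your scheme leaves open the case in which both higher-order lifts are rational, which is exactly the hard case. The paper closes it by translating both conditions into constraints on $\rho_{E,2^{M+1}}$, reducing mod $4$, and showing the mod-$4$ image would be contained in the order-$4$ group generated by $\smallmat{1}{0}{2}{1}$ and $\smallmat{3}{0}{0}{3}$, all of whose elements have determinant $1$, again contradicting surjectivity of the determinant. You need to add this case to complete parts (2) and (3).
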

	
	\begin{proof}
	
		Let $E/\Q$ be as in the statement. It follows that the only $\Q$-rational subgroups are $\langle [2^a]P_2+[2^b]Q_4\rangle$ for $0\leq a \leq 1$ and $0\leq b \leq 2$, if $C(E)=6$ and $\langle [2^a]P_2+[2^b]Q_8\rangle$ for $0\leq a \leq 1$ and $0\leq b \leq 3$ if $C(E)=8$.  In particular, by Lemma \ref{Maximality-Of-Rational-2-Power-Groups}, we have that $E / \langle P_{2}\rangle (\Q)_{\text{tors}}$ and $E / \langle P_{2} + Q_{2}\rangle (\Q)_{\text{tors}}$ are both cyclic. By Lemma \ref{lem-2torspt-all-have-2torspt}, both have a point of order $2$ or $4$ defined over $\Q$ but no points of order $8$ by Lemma  \ref{lem-Q-Rational-Grps-Order-8} and by our assumption that E has the largest bicyclic torsion subgroup. So let us assume for a contradiction  that both are of order $4$. Let $\phi\colon E\to E/\langle P_2\rangle$ and $\phi'\colon E\to E/\langle P_2+Q_2\rangle$ be isogenies with kernel $\left\langle P_{2} \right\rangle$ and $\left\langle P_{2} + Q_{2} \right\rangle$, respectively. By the third statement of Lemma \ref{lem-necessity-for-point-rationality},  the points $\phi(Q_{2})$ and $\phi'(Q_{2})$ are both defined over $\Q$. The groups of order $4$ that contain $\phi(Q_{2})$ are $\langle \phi(Q_{4}) \rangle$ and $\langle \phi(P_{4} + Q_{4}) \rangle$. If $\phi(Q_{4})$ were defined over $\Q$, then by Lemma \ref{lem-necessity-for-point-rationality}, $\sigma(Q_{4}) - Q_{4} \in \langle P_{2} \rangle$ for all $\sigma \in G_{\Q}.$ But we already know that $\langle Q_{4} \rangle$ is $\Q$-rational, so that means $Q_{4}$ must be defined over $\Q$, a contradiction. If $\phi(P_{4} + Q_{4})$ is defined over $\Q$, then by Lemma \ref{lem-necessity-for-point-rationality}, we have  $\sigma(P_{4} + Q_{4}) - (P_{4} + Q_{4}) \in \langle P_{2} \rangle$ for all $\sigma \in G_{\Q}.$

		The groups of order $4$ that contain $\phi'(Q_{2})$ are $\langle \phi'(Q_{4}) \rangle$ and $\langle \phi'(P_{4}) \rangle$. If $\phi'(Q_{4})$ were defined over $\Q$, then by Lemma \ref{lem-necessity-for-point-rationality}, we have $\sigma(Q_{4}) - Q_{4} \in \langle P_{2} + Q_{2} \rangle$ for all $\sigma \in G_{\Q}$. But we already know that $\langle Q_{4} \rangle$ is $\Q$-rational, which means that $Q_{4}$ must be defined over $\Q$, a 
		contradiction. If $\phi'(P_{4})$ were defined over $\Q$, then $\sigma(P_{4}) - P_{4} \in \langle P_{2} + Q_{2} \rangle$ for all $\sigma \in G_{\Q}$.
		
		Thus, if both $E / \langle P_{2}\rangle (\Q)_{\text{tors}}$ and $E / \langle P_{2} + Q_{2}\rangle (\Q)_{\text{tors}}$ are both cyclic of order $4$, then 
		$\sigma(P_{4}) - P_{4} \in \langle P_{2} + Q_{2} \rangle$ and $\sigma(P_{4} + Q_{4}) - (P_{4} + Q_{4}) \in \langle P_{2} \rangle$ for all $\sigma \in G_{\Q}$, and, recall that $\langle Q_4\rangle$ is assumed to be $\Q$-rational. Then we have for an arbitrary $\sigma \in G_{\Q}$, that either $\sigma(P_4)=P_4$ and $\sigma(Q_4)=Q_4$, or $\sigma(P_4)=[3]P_4+[2]Q_4$ and $\sigma(Q_4)=[3]Q_4$. Thus, the image of $\rho_{E,4}$ is of the form 
		$$\left\{\left(\begin{array}{cc} 1 & 0 \\ 0 & 1 \end{array} \right), \left(\begin{array}{cc} 3 & 0 \\ 2 & 3 \end{array} \right) \right\},$$
		but the determinant would not be surjective onto $(\Z/4\Z)^\times$, and we reach a contradiction. This proves (1).
		
		For (2), let $Q_{2^M} = Q_{4}$ if $Q_{8}$ does not generate a $\Q$-rational group and let $Q_{2^M} = Q_{8}$ if $Q_{8}$ does generate a $\Q$-rational group. Then the finite cyclic $\Q$-rational groups of maximal order are $\left\langle Q_{2^M} \right\rangle$ and $\left\langle P_{2} + Q_{2^M} \right\rangle$. Let $Q_{2^{M+1}} \in E$ such that $[2]Q_{2^{M+1}} = Q_{2^M}$. $E / \left\langle Q_{2^M} \right\rangle(\Q)_{\text{tors}}$ and $E / \left\langle P_{2} + Q_{2^M} \right\rangle(\Q)_{\text{tors}}$ are cyclic by Lemma \ref{Maximality-Of-Rational-2-Power-Groups}, of order $2$ or $4$ by Lemma \ref{lem-2torspt-all-have-2torspt}, but not $8$ by Lemma \ref{lem-Q-Rational-Grps-Order-8} and our assumption that $E$ has the largest bicyclic torsion subgroup. Let us assume for a contradiction that both groups are of order $4$. If we let $\phi\colon E\to E/\langle Q_{2^M} \rangle$ and $\phi'\colon E\to E/\langle P_2+Q_{2^M}\rangle$ be isogenies with kernel $\left\langle Q_{2^M} \right\rangle$ and $\left\langle P_{2} + Q_{2^M} \right\rangle$ respectively, then $\phi(P_{2})$ and $\phi'(P_{2})$ are the elements of order $2$, respectively by the third statement of Lemma \ref{lem-necessity-for-point-rationality}. The groups of order $4$ that contain $\phi(P_{2})$ are $\langle \phi(P_{4}) \rangle$ and $\langle \phi(P_{4} + Q_{2^{M+1}}) \rangle$. The groups of order $4$ that contain $\phi'(P_{2})$ are $\langle \phi'(P_{4}) \rangle$ and $\langle \phi'(Q_{2^{M+1}}) \rangle$. If $\phi(P_{4})$ is defined over $\Q$, then $\sigma(P_{4}) - P_{4} \in \left\langle Q_{2^{M+1}} \right\rangle$ for all $\sigma \in G_{\Q}$ by Lemma \ref{lem-necessity-for-point-rationality} and by Lemma \ref{orbit-of-supergroups-of-rational-groups}, we have $\sigma(P_{4}) - P_{4} \in \left\langle Q_{2} \right\rangle$ for all $\sigma \in G_{\Q}$ but then $E / \left\langle Q_{2} \right\rangle$ has a point of order $4$, namely, $P_{4} + \left\langle Q_{2} \right\rangle$. Thus, $E / \left\langle Q_{2} \right\rangle(\Q)_{\text{tors}} \cong \Z / 2 \Z \times \Z / 4 \Z$, contradicting our assumption that there are no bicyclic torsion subgroups of elements in the isogeny class of order greater than $4$. For a similar reason, $\phi'(P_{4})$ cannot be defined over $\Q$. Let's assume both $\phi(P_{4}+Q_{2^{M+1}})$ and $\phi'(Q_{2^{M+1}})$ are defined over $\Q$. Thus, $\sigma(Q_{2^{M+1}}) - Q_{2^{M+1}} \in \left\langle P_{2} + Q_{2^{M}} \right\rangle$ and $\sigma(P_{4} + Q_{2^{M+1}}) - (P_{4} + Q_{2^{M+1}}) \in \left\langle Q_{2^{M}} \right\rangle$ for all $\sigma \in G_{\Q}$. By \ref{orbit-of-supergroups-of-rational-groups}, $\sigma(P_{4}) - P_{4} \in \left\langle P_{2}, Q_{2} \right\rangle$. Taking everything together, we have for an arbitrary $\sigma \in G_{\Q}$,
		
		\begin{center} $\sigma(P_{4}) = P_{4}$ or $P_{4} + Q_{2}$ $\iff$ $\sigma(Q_{2^{M+1}}) = [1+4r]Q_{2^{M+1}}$ for some $r \in \Z$ \end{center}
		
		\begin{center} $\sigma(P_{4}) = [3]P_{4}$ or $[3]P_{4} + Q_{2}$ $\iff$ $\sigma(Q_{2^{M+1}}) = P_{2} + [3+4s]Q_{2^{M+1}}$ for some $s \in \Z$ \end{center}
		
		Multiplying the right side of both the above conditions by $\frac{\left\lvert Q_{2^{M+1}}\right\rvert}{4}$, we have
		
		\begin{center} $\sigma(P_{4}) = P_{4}$ or $P_{4} + Q_{2}$ $\iff$ $\sigma(Q_{4}) = Q_{4}$ \end{center}
		
		\begin{center} $\sigma(P_{4}) = [3]P_{4}$ or $[3]P_{4} + Q_{2}$ $\iff$ $\sigma(Q_{4}) = [3]Q_{4}$ \end{center}
		
		Thus, the image mod $4$ of the Galois representation associated to $E$ is conjugate to a subgroup of $G = \left\{\left(\begin{array}{cc}
		    1 & 0 \\
		    0 & 1
		\end{array}\right), \left(\begin{array}{cc}
		    1 & 0 \\
		    2 & 1
		\end{array}\right), \left(\begin{array}{cc}
		    3 & 0 \\
		    0 & 3
		\end{array}\right), \left(\begin{array}{cc}
		    3 & 0 \\
		    2 & 3
		\end{array}\right)\right\}$. As the determinant of every element of $G$ is $1$, we have a contradiction. Hence, if $Q_{8}$ does not generate a $\Q$-rational group, then $E / \left\langle Q_{4} \right\rangle(\Q)_{\text{tors}} \cong \Z / 2 \Z$ or $\Z / 4 \Z$ and $E / \left\langle P_{2} + Q_{4} \right\rangle(\Q)_{\text{tors}} \cong \Z / 2 \Z$ or $\Z / 4 \Z$ but not both of the torsion subgroups are order $4$. Also, if $Q_{8}$ generates a $\Q$-rational group, then $E / \left\langle Q_{8} \right\rangle(\Q)_{\text{tors}} \cong \Z / 2 \Z$ or $\Z / 4 \Z$ and $E / \left\langle P_{2} + Q_{8} \right\rangle(\Q)_{\text{tors}} \cong \Z / 2 \Z$ or $\Z / 4 \Z$ but not both of the torsion subgroups are order $4$.
		
	\end{proof}
	
	\section{Elliptic curves with full rational $2$-torsion, and graph of type $T_6$}\label{sec-T6graphs}
	
	In this section we resume our classification of isogeny-torsion graphs, and we deal with curves $E/\Q$ that have full two-torsion and $C(E)=C_2(E)=6$, and therefore the isogeny graph is of type $T_6$. 
	\begin{center} 
		\begin{tikzcd}
			E_{2}=E_1/\langle P_2\rangle &                                       &                             & E_{5}=E_1/\langle Q_4\rangle \\
			& E_{1} \arrow["2",lu, no head] \arrow["2",ld, no head] \arrow["2",r, no head] & E_{4}=E_1/\langle Q_2\rangle \arrow["2",ru, no head] \arrow["2",rd, no head] &       \\
			E_{3}=E_1/\langle P_2+Q_2\rangle &                                       &                             & E_{6}=E_1/\langle P_2+Q_4\rangle
		\end{tikzcd}
	\end{center} 
	We distinguish two cases, according to whether one of $E_1$ or $E_4$ has torsion subgroup isomorphic to $\Z/2\Z\times \Z/4\Z$. 
	
	\subsection{Elliptic curves with $E(\Q)_{\text{tors}}\cong \Z/2\Z\times \Z/2\Z$, and graph of type $T_6$}
	
	Let $E=E_1/\Q$ be an elliptic curve with $E(\Q)_{\text{tors}}\cong \Z/2\Z \times \Z/2\Z$, with $E[4]=\langle P_4,Q_4\rangle$ such that $C(E)=C_2(E)=6$ and the finite cyclic $\Q$-rational groups of $E$ are $\langle \mathcal{O} \rangle, \langle P_{2} \rangle, \langle Q_{2} \rangle, \langle P_{2} + Q_{2} \rangle, \langle Q_{4} \rangle,$ and $\langle P_{2} + Q_{4} \rangle$. Suppose no torsion subgroup of any elliptic curve $\Q$-isogenous to $E$ has order $8$. By Lemma \ref{lem-Z2xZ2}, we have that $E / \langle Q_{2}\rangle (\Q)_{\text{tors}} \cong \Z / 2 \Z \times \Z / 2 \Z$. We proved in Lemma \ref{Not-too-many-Z/4} that $E / \langle Q_{4}\rangle (\Q)_{\text{tors}}$ and $E /\langle  P_{2} + Q_{4}\rangle (\Q)_{\text{tors}}$ are both cyclic of order $2$ or $4$, but they are not both order $4$ at the same time. In addition, the same lemma also says that $E / \langle P_{2}\rangle (\Q)_{\text{tors}}$ and $E / \langle P_{2} + Q_{2}\rangle (\Q)_{\text{tors}}$ are both cyclic of order $2$ or $4$, but they are not both of order $4$. We shall show that only one out of these four groups can be isomorphic to $\Z/4\Z$. For a contradiction, suppose two of them are of order $4$. Note that by a change of $E[4]$ or $E[2]$ bases if necessary, we can assume that $E / \langle Q_{4}\rangle (\Q)_{\text{tors}}$ and $E / \langle P_{2}\rangle (\Q)_{\text{tors}}$ are isomorphic to $\Z/4\Z$.
	
	Let  $\phi\colon E\to E/\langle P_2\rangle$ and $\phi'\colon E\to E/\langle Q_4\rangle$ be isogenies with kernel $\left\langle P_{2} \right\rangle$ and $\left\langle Q_{4} \right\rangle$, respectively. Then, $\phi(Q_{2})$ is the rational element of order $2$ in $E / \langle P_{2}\rangle $ by Lemma \ref{lem-necessity-for-point-rationality}. The groups of order $4$ that contain $\phi(Q_{2})$ are $\langle \phi(Q_{4}) \rangle$ and $\langle \phi(P_{4} + Q_{4}) \rangle$. If $\phi(Q_{4})$ were defined over $\Q$, then $\sigma(Q_{4}) - Q_{4} \in \langle P_{2} \rangle$ for all $\sigma \in G_{\Q}$, and since $\langle Q_4\rangle$ is $\Q$-rational, we also have $\sigma(Q_4)\in \langle Q_4\rangle$. This would make $Q_{4}$ a rational point of $E$, a contradiction, since we assumed that only the $2$-torsion is rational. Thus, for $E / \langle P_{2}\rangle (\Q)_{\text{tors}}$ to be order $4$, we must have that $\phi(P_{4} + Q_{4})$ is defined over $\Q$.
	
	Similarly, $\phi'(P_{2})$ is the rational point of order $2$ in $ E / \langle Q_{4} \rangle$ by Lemma \ref{lem-necessity-for-point-rationality}. The groups of order $4$ that contain $\phi'(P_{2})$ are $\langle \phi'(P_{4}) \rangle$ and $\langle \phi'(P_{4} + Q_{8}) \rangle$. If $\phi'(P_{4})$ was defined over $\Q$, then $\sigma(P_{4}) - P_{4} \in \langle Q_{4} \rangle$ for all $\sigma \in G_{\Q}$. By Lemma \ref{orbit-of-supergroups-of-rational-groups}, we see that $\sigma(P_{4}) - P_{4} \in E[2]$ for all $\sigma \in G_{\Q}$ and thus, $\sigma(P_{4}) - P_{4} \in \langle Q_{2} \rangle.$ Thus, $E / \langle Q_{2}\rangle (\Q)_{\text{tors}} \cong \Z / 2 \Z \times \Z / 4 \Z$ as $E / \left\langle Q_{2} \right\rangle$ has full two-torsion ($P_{2} + \left\langle Q_{2} \right\rangle$, $Q_{4} + \left\langle Q_{2} \right\rangle$, and $P_{2} + Q_{4} + \left\langle Q_{2} \right\rangle$ are distinct elements of order $2$ defined over $\Q$) and a point of order $4$, $P_4 + \langle Q_2\rangle$, defined over $\Q$. We assumed no elliptic curve isogenous to $E$ has a torsion subgroup of order $8$, thus, if $E / \langle Q_{4}\rangle (\Q)_{\text{tors}}$ were to have order $4$, then $\phi'(P_{4} + Q_{8})$ is defined over $\Q$.
	
	Suppose then that both $\phi'(P_{4} + Q_{8})$ and $\phi(P_{4} + Q_{4})$ are defined over $\Q$. Then, by Lemma \ref{lem-necessity-for-point-rationality}, we have $\sigma(P_{4} + Q_{8}) - (P_{4} + Q_{8}) \in \langle Q_{4} \rangle$ and $\sigma(P_{4} + Q_{4}) - (P_{4} + Q_{4}) \in \langle P_{2} \rangle$ for all $\sigma \in G_{\Q}$. In addition $\sigma(Q_4)=[2a+1]Q_4$ for some $a=a(\sigma)\in\Z$, because $\langle Q_4\rangle$ is $\Q$-rational. Thus, for each $\sigma$, there are $a$ and $b$ integers such that $$\sigma(P_4)+[2a+1]Q_4 = P_4+Q_4+[b]P_2$$ 
	and therefore $\sigma(P_4)=[2b+1]P_4 + [2a]Q_4$. Hence, the image of $\rho_{E,4}$ is a subgroup of  order $4$
	$$G_4=\left\{\left(\begin{array}{cc} 2b+1 & 0 \\ 2a & 2a+1 \end{array} \right) : a,b\in \{0,1\} \right\}.$$
	On the other hand, the condition $\sigma(P_{4} + Q_{8}) - (P_{4} + Q_{8}) \in \langle Q_{4} \rangle$ implies that there is some $d\in \Z$ such that 
	$$\sigma(Q_8) = [4b]P_8 + [2d+1]Q_8,$$
	and since $\sigma(Q_4)=[2a+1]Q_4$ it follows that $a\equiv d \bmod 2$. Thus, the condition imposed on $\sigma(Q_8)$ is a lift of the condition on $Q_4$ modulo $4$. Hence, the image of $\rho_{E,8}$ is contained in the full inverse image of $G_4$ in $\GL(2,\Z/8\Z)$, and more concretely it is a subgroup of the following group of order $32$:
	$$G_8=\left\{\left(\begin{array}{cc} e & 4b \\ f & 2d+1 \end{array} \right) : e\equiv 2b+1 \bmod 4, f\equiv 2d\bmod 4 \right\}.$$
	We have searched the Rouse--Zureick-Brown database \cite{rouse} of $2$-adic images for groups in $\GL(2,\Z/8\Z)$ that are conjugates of a subgroup of $G_8$ above, and found none. Thus, the image cannot be contained in $G_8$ and we have reached a contradiction. Hence, only one of $E/\langle P_2\rangle$ or $E/\langle Q_4\rangle$ (or $E / \left\langle P_{2} + Q_{2} \right\rangle$ or $E / \left\langle P_{2} + Q_{4} \right\rangle$) may have $\Z/4\Z$ torsion defined over $\Q$. Thus, the possible torsion configurations for elliptic curves in a $T_6$ graph, such that no elliptic curve has a torsion subgroup of order $8$ are $([2,2],[2],[2],[2,2],[2],[2])$ or $([2,2],[4],[2],[2,2],[2],[2])$, and examples are shown in our tables. 
	
	\subsection{Elliptic curves with $E(\Q)_{\text{tors}}\cong \Z/2\Z\times \Z/4\Z$, and graph of type $T_6$}
	Let $E=E_1 / \Q$ with $E(\Q)_{\text{tors}} \cong \Z / 2 \Z \times \Z / 4 \Z$ and $C(E)=C_2(E)=6$. Thus, we have six finite cyclic $\Q$-rational groups, namely  $\langle \mathcal{O} \rangle, \langle P_{2} \rangle, \langle Q_{2} \rangle, \langle P_{2} + Q_{2} \rangle, \langle Q_{4} \rangle,$ and $\langle P_{2} + Q_{4} \rangle$ and every element of these six groups is defined over $\Q$.
	
	By Lemma \ref{lem-Z2xZ2}, we have that $E /\langle Q_{2}\rangle (\Q)_{\text{tors}} \cong \Z / 2 \Z \times \Z / 2 \Z$ and by Lemma \ref{lem-Z2}, we have $E / \langle Q_{4}\rangle (\Q)_{\text{tors}} \cong E / \langle P_{2} + Q_{4}\rangle (\Q)_{\text{tors}} \cong \Z / 2 \Z.$ Thus, it remains to determine the torsion subgroups of $E /\langle P_{2}\rangle (\Q)$ and $E /\langle  P_{2} + Q_{2}\rangle$. We shall show that they are both either $\Z/4\Z$ or $\Z/8\Z$, but they are not both $\Z/8\Z$ simultaneously.
	
	The torsion subgroups $E /\langle P_{2}\rangle (\Q)_{\text{tors}}$ and $E /\langle  P_{2} + Q_{2}\rangle 
	(\Q)_{\text{tors}}$ are both cyclic by Lemma \ref{Maximality-Of-Rational-2-Power-Groups}. Let  $\phi\colon E\to E/\langle P_2\rangle$ and $\phi'\colon E\to E/\langle  P_{2} + Q_{2}\rangle$ be isogenies with kernel $\left\langle P_{2} \right\rangle$ and $\left\langle P_{2}+Q_{2} \right\rangle$, respectively. Then, the points  $\phi(Q_{4})$ and $\phi'(Q_{4})$ are of order $4$ defined over $\Q$, respectively, by the third statement in Lemma \ref{lem-necessity-for-point-rationality}. The groups of order $8$ that contain $\phi(Q_{4})$ are $\langle \phi(Q_{8}) \rangle$ and $\langle \phi(P_{4} + Q_{8}) \rangle$. The groups of order $8$ that contain $\phi'(Q_{4})$ are $\langle \phi'(Q_{8}) \rangle$ and $\langle \phi'(P_{4} + Q_{8}) \rangle$. If both $\phi(Q_{8})$ and $\phi'(Q_{8})$ are defined over $\Q$, then $\sigma(Q_{8}) - Q_{8} \in \langle P_{2} \rangle$ and $\sigma(Q_{8}) - Q_{8} \in \langle P_{2} + Q_{2} \rangle$ for all $\sigma \in G_{\Q}$. This forces $Q_{8}$ to be defined over $\Q$, a contradiction. Similarly, if both $\phi(P_{4} + Q_{8})$ and $\phi'(P_{4} + Q_{8})$ are defined over $\Q$, then $\sigma(P_{4} + Q_{8}) - (P_{4} + Q_{8}) \in \langle P_{2} \rangle$ and $\sigma(P_{4} + Q_{8}) - (P_{4} + Q_{8}) \in \langle P_{2} + Q_{2} \rangle$ for all $\sigma \in G_{\Q}$. This would mean $P_{4} + Q_{8}$ is defined over $\Q$, a contradiction.
	
	So suppose $\phi(Q_{8})$ and $\phi'(P_{4} + Q_{8})$ are defined over $\Q$ (note that if we change basis of $E[2]$, replacing $P_2$ by $P_2+Q_2$, then this case also takes care of the case of $\phi(P_{4} + Q_{8})$ and $\phi'(Q_{8})$ being defined over $\Q$). Thus for all $\sigma \in G_{\Q}$, we have $\sigma(Q_{8}) - Q_{8} \in \langle P_{2} \rangle$ and  $\sigma(P_{4} + Q_{8}) - (P_{4} + Q_{8}) \in \langle P_{2} + Q_{2} \rangle$. Thus, for each $\sigma$, there are integers $a=a(\sigma)$ and $b=b(\sigma)$ such that $\sigma(P_4+Q_8)=P_4+Q_8+[a](P_2+Q_2)$ and $\sigma(Q_8)=Q_8+[b]P_2$. In particular, $\sigma(P_4)= [2a-2b+1]P_4+[2a]Q_4$. Hence, the image of $\rho_{E,8}$ is contained in the full inverse image of $G_4$ in $\GL(2,\Z/8\Z)$
	$$G_4=\left\{\left(\begin{array}{cc} 2(a-b)+1 & 0 \\ 2a & 1 \end{array} \right) : a,b\in \{0,1\} \right\},$$
	and more concretely it is conjugate to a subgroup of the following group of order $16$:
	$$G_8=\left\{\left(\begin{array}{cc} 2(a-b)+1 & 4b \\ 2a & 1 \end{array} \right): a,b\in \Z/8\Z  \right\}\subseteq \GL(2,\Z/8\Z).$$
	We have searched the Rouse--Zureick-Brown database \cite{rouse} of $2$-adic images for groups in $\GL(2,\Z/8\Z)$ that are conjugates of a subgroup of $G_8$ above, and found none. Thus, the image cannot be contained in $G_8$ and we have reached a contradiction. Hence, only one of $E/\langle P_2\rangle$ or $E/\langle P_2 + Q_2\rangle$ may have $\Z/8\Z$ torsion defined over $\Q$. Thus, the possible torsion configurations for elliptic curves in a $T_6$ graph, such that the $E(\Q)_\text{tors}\cong \Z/2\Z\times\Z/4\Z$ are $([2,4],[4],[4],[2,2],[2],[2])$ or $([2,4],[8],[4],[2,2],[2],[2])$, and examples are shown in our tables. 
	
	\section{Elliptic curves with full rational $2$-torsion, and graph of type $T_8$}\label{sec-T8graphs}
	
	Let $E=E/\Q$ be an elliptic curve with full two-torsion and $C(E)=C_2(E)=8$. As we saw in Section \ref{sec-whatgraphs}, the non-trivial finite cyclic  $\Q$-rational subgroups are $\left\langle \mathcal{O} \right\rangle$, $\langle P_2 \rangle$, $\langle P_2+Q_2 \rangle$, $\langle Q_2 \rangle$, $\left\langle Q_{4} \right\rangle$, $\langle P_2+Q_4 \rangle$, $\langle Q_8 \rangle$, and $\langle P_2+Q_8 \rangle$. The corresponding isogeny graph is as follows:
	
			\begin{center}
		\begin{tikzcd}
			& \substack{E_2=\\E_1/\langle P_2\rangle}                                &                            & \substack{E_7=\\E_1/\langle Q_8 \rangle}                     &       \\
			& E_{1} \arrow["2",u, no head] \arrow["2",ld, no head] \arrow["2",rd, no head] &                            &  \substack{E_6=\\E_1/\langle Q_4 \rangle} \arrow["2",rd, no head] \arrow["2",u, no head] &       \\
			\substack{E_3=\\E_1/\langle P_2 + Q_2 \rangle}&                                       & \substack{E_4=\\E_1/\langle Q_2 \rangle} \arrow["2",d, no head] \arrow["2",ru, no head] &                            & \substack{E_8=\\E_1/\langle P_2+Q_8 \rangle}\\
			&                                       &  \substack{E_5=\\E_1/\langle P_2+Q_4 \rangle}                     &                            &      
		\end{tikzcd}
	\end{center}

	We will consider cases according to the isomorphism type of $E(\Q)_{\text{tors}}$. 
	
	\subsection{Elliptic curves with $\Z/2\Z\times \Z/8\Z$ rational torsion, and graph of type $T_8$} 
	
	Let $E/\Q$ be an elliptic curve with $E(\Q)_{\text{tors}} = \langle P_2,Q_8\rangle \cong \Z / 2 \Z \times \Z / 8 \Z$, and $C(E)=C_2(E)=8$. Then, we claim that the isogeny-torsion graph is of the form $([2,8],[8],[8],[2,4],[4],[2,2],[2],[2]).$
	
	Indeed, Lemma \ref{lem-Z2xZ4} shows that $E / \langle Q_{2}\rangle (\Q)_{\text{tors}} \cong \Z / 2 \Z \times \Z / 4 \Z$, Lemma \ref{lem-Z2xZ2} shows the isomorphism $E / \langle Q_{4}\rangle (\Q)_{\text{tors}} \cong \Z / 2 \Z \times \Z / 2 \Z$, and Lemma \ref{lem-Z2} shows that $E / \langle Q_{8}\rangle (\Q)_{\text{tors}} \cong E / \langle P_{2} + Q_{8}\rangle (\Q)_{\text{tors}} \cong \Z / 2 \Z.$
	
	The torsion groups $E / \langle P_{2}\rangle (\Q)_{\text{tors}}$ and $E / \langle P_{2} + Q_{2}\rangle (\Q)_{\text{tors}}$ are cyclic by Lemma \ref{Maximality-Of-Rational-2-Power-Groups}. For $R\in E$ that generates a Galois invariant subgroup, let $\phi_R \colon E \to E/\langle R\rangle$ be the corresponding isogeny with kernel $\left\langle R \right\rangle$. Then, by the third statement in Lemma  \ref{lem-necessity-for-point-rationality}, the points  $\phi_{P_{2}}(Q_{8})$ and $\phi_{P_{2} + Q_{2}}(Q_{8})$ are of order $8$ defined over $\Q$. Thus, $E / \langle P_{2}\rangle (\Q)_{\text{tors}} \cong E/ \langle P_{2} + Q_{2}\rangle (\Q)_{\text{tors}} \cong \Z / 8 \Z$. 
	
	It remains to determine $E / \langle P_{2} + Q_{4}\rangle (\Q)_{\text{tors}}$. Again, by Lemma \ref{Maximality-Of-Rational-2-Power-Groups}, $E / \langle P_{2} + Q_{4}\rangle (\Q)_{\text{tors}}$ is cyclic. Let $\phi=\phi_{P_2+Q_4} \colon E \to E / \left\langle P_{2} + Q_{4} \right\rangle$ be an isogeny with kernel $\left\langle P_{2}+Q_{4} \right\rangle$. Then, $\phi(Q_{8})$ is a point of order $4$ defined over $\Q$ by the third statement in Lemma \ref{lem-necessity-for-point-rationality}. The groups of order $8$ that contain $\phi(Q_{8})$ are $\langle \phi(Q_{16}) \rangle$ and $\langle \phi(P_{4} + Q_{16}) \rangle$. To generalize, let $A = Q_{16}$ or $P_{4} + Q_{16}$ and assume $\phi(A)$ is defined over $\Q$, then by Lemma \ref{lem-necessity-for-point-rationality}, $\sigma(A) - A \in \left\langle P_{2} + Q_{4} \right\rangle$ for all $\sigma \in G_{\Q}$. Note that $[2]A$ is a point of order $8$ defined over $\Q$. Note that $[8]A = Q_{2}$. Multiplying through by $2$, we have $\sigma([2]A) - [2]A \in \left\langle Q_{2} \right\rangle$ for all $\sigma \in G_{\Q}$ but as $[2]A$ is defined over $\Q$ we must have $\sigma([2]A) - [2]A = \mathcal{O}$ for all $\sigma \in G_{\Q}$. This means that originally, $\sigma(A) - A \in \left\langle Q_{2} \right\rangle$ for all $\sigma \in G_{\Q}$. As $Q_{2} \in \left\langle A \right\rangle$, this forces $\left\langle A \right\rangle$ to be a $\Q$-rational subgroup of $E$ of order $16$ by Lemma \ref{lem-necessity-for-subgroup-rationality}, but that is a contradiction as all the finite cyclic $\Q$-rational subgroups of $E$ are order at most $8$. Thus, $E / \left\langle P_{2} + Q_{4} \right\rangle(\Q)_{\text{tors}} \cong \Z / 4 \Z$.
	
	Hence, we have determined the torsion subgroups at every vertex and the isogeny-torsion graph is of type $([2,8],[8],[8],[2,4],[4],[2,2],[2],[2])$, as claimed.
	
	\subsection{Elliptic curves with $\Z/2\Z\times \Z/4\Z$ rational torsion, and graph of type $T_8$} 
	
	Let $E/\Q$ be an elliptic curve with $E(\Q)_{\text{tors}} = \langle P_2,Q_4\rangle \cong \Z / 2 \Z \times \Z / 4 \Z$, and $C(E)=C_2(E)=8$. Thus, we have $8$ $\Q$-rational groups, namely, $\langle \mathcal{O} \rangle, \langle P_{2} \rangle, \langle Q_{2} \rangle, \langle P_{2} + Q_{2} \rangle, \langle Q_{4} \rangle, \langle P_{2} + Q_{4} \rangle, \langle Q_{8} \rangle,$ and $\langle P_{2} + Q_{8} \rangle$. Then, we claim that the isogeny-torsion graph is of the form $([2,4],[4],[4],[2,4],[4],[2,2],[2],[2])$, or one of $([2,4],[4],[4],[2,4],[8],[2,2],[2],[2])$ or $([2,4],[8],[4],[2,4],[4],[2,2],[2],[2])$.
	
	As in the previous case, Lemma \ref{lem-Z2xZ4} shows that $E / \langle Q_{2}\rangle (\Q)_{\text{tors}} \cong \Z / 2 \Z \times \Z / 4 \Z$, Lemma \ref{lem-Z2xZ2} shows the isomorphism $E / \langle Q_{4}\rangle (\Q)_{\text{tors}} \cong \Z / 2 \Z \times \Z / 2 \Z$, and Lemma \ref{lem-Z2} shows that $E / \langle Q_{8}\rangle (\Q)_{\text{tors}} \cong E / \langle P_{2} + Q_{8}\rangle (\Q)_{\text{tors}} \cong \Z / 2 \Z.$ It remains to compute $E / \langle P_{2} \rangle (\Q)_{\text{tors}}, E / \langle P_{2} + Q_{2} \rangle (\Q)_{\text{tors}},$ and $E / \langle P_{2} + Q_{4} \rangle (\Q)_{\text{tors}}$, which are all cyclic by Lemma \ref{Maximality-Of-Rational-2-Power-Groups}. We will show they each of these three groups are isomorphic to $\Z / 4 \Z$ or $\Z / 8 \Z$ but at most one is isomorphic to $\Z / 8 \Z$.
	
	Let $\phi_{P_{2}+Q_{4}} \colon E \to E / \left\langle P_{2}+Q_{4} \right\rangle$ be an isogeny with kernel $\left\langle P_{2}+Q_{4} \right\rangle$. By Lemma \ref{orbit-of-supergroups-of-rational-groups}, $\sigma(Q_{8}) - Q_{8} \in \langle Q_{2} \rangle$ for all $\sigma \in G_{\Q}$. Thus, $\phi_{P_{2} + Q_{4}}(Q_{8})$ is a point of order $4$ defined over $\Q$ by Lemma \ref{lem-necessity-for-point-rationality}. The groups of order $8$ that contain $\phi_{P_{2} + Q_{4}}(Q_{8})$ are $\langle \phi_{P_{2} + Q_{4}}(Q_{16}) \rangle$ and $\langle \phi_{P_{2} + Q_{4}}(P_{4} + Q_{16}) \rangle$. So $E / \langle P_{2} + Q_{4} \rangle(\Q)_{\text{tors}} \cong \Z / 4 \Z$ or $\Z / 8 \Z$ depending on whether $\phi_{P_{2} + Q_{4}}(Q_{16})$ or $\phi_{P_{2} + Q_{4}}(P_{4} + Q_{16})$ is defined over $\Q$.
	
	To generalize, let $A$ be $P_{2}$ or $P_{2} + Q_{2}$ and let $\phi_{A} \colon E \to E / \left\langle A \right\rangle$ be an isogeny with kernel $\left\langle A \right\rangle$. The point $\phi_{A}(Q_{4})$ is of order $4$ defined over $\Q$ by the third statement in Lemma \ref{lem-necessity-for-point-rationality}. The groups of order $8$ that contain $\phi_{A}(Q_{4})$ are $\langle \phi_{A}(Q_{8}) \rangle$ and $\langle \phi_{A}(P_{4} + Q_{8}) \rangle$. Note that if $\phi_{A}(Q_{8})$ is defined over $\Q$, then $\sigma(Q_{8}) - Q_{8} \in \langle A \rangle$ for all $\sigma \in G_{\Q}$ by Lemma \ref{lem-necessity-for-point-rationality}. As $\sigma(Q_{8}) - Q_{8} \in \langle Q_{2} \rangle$ for all $\sigma \in G_{\Q}$, $\phi_{A}(Q_{8})$ is defined over $\Q$ if and only if $Q_{8}$ is defined over $\Q$, which is not the case. Thus, $E / \langle P_{2} \rangle (\Q)_{\text{tors}} \cong \Z / 8 \Z$ or $\Z / 4 \Z$ depending on whether or not $\phi_{P_{2}}(P_{4} + Q_{8})$ is defined over $\Q$ and $E / \langle P_{2} + Q_{2} \rangle(\Q)_{\text{tors}} \cong \Z / 8 \Z$ depending on whether or not $\phi_{P_{2} + Q_{2}}(P_{4} + Q_{8})$ is defined over $\Q$.
	
	Let us assume there are two isogenous curves that have torsion subgroups isomorphic to $\Z / 8 \Z$. Assume $E / \langle P_{2} \rangle (\Q)_{\text{tors}}$ and $E / \langle P_{2} + Q_{2} \rangle (\Q)_{\text{tors}}$ are both isomorphic to $\Z / 8 \Z.$ Thus, $\phi_{P_{2}}(P_{4} + Q_{8})$ and $\phi_{P_{2} + Q_{2}}(P_{4} + Q_{8})$ are both defined over $\Q$. Then, $\sigma(P_{4} + Q_{8}) - (P_{4} + Q_{8}) \in \langle P_{2} \rangle$ and $\sigma(P_{4} + Q_{8}) - (P_{4} + Q_{8}) \in \langle P_{2} + Q_{2} \rangle$ for all $\sigma \in G_{\Q}$ by Lemma \ref{lem-necessity-for-point-rationality}. This would force $P_{4} + Q_{8}$ to be defined over $\Q$, a contradiction.
	
	Now let us see what happens when $E / \langle P_{2} \rangle (\Q)_{\text{tors}}$ and $E / \langle P_{2} + Q_{4} \rangle (\Q)_{\text{tors}}$ are both isomorphic to $\Z / 8 \Z$ (interchanging between the two bases, $\langle P_{2}, Q_{2} \rangle$ and $\langle P_{2} + Q_{2}, Q_{2} \rangle$ of $E[2]$ shows that eliminating the case of $E / \langle P_{2} \rangle(\Q)_{\text{tors}} \cong E / \langle P_{2} + Q_{4} \rangle(\Q)_{\text{tors}} \cong \Z / 8 \Z$ also eliminates the case of $E / \langle P_{2} + Q_{2} \rangle(\Q)_{\text{tors}} \cong E / \langle P_{2}  + Q_{4} \rangle(\Q)_{\text{tors}} \cong \Z / 8 \Z$). Either $\phi_{P_{2} + Q_{4}}(Q_{16})$ and $\phi_{P_{2}}(P_{4} + Q_{8})$ are defined over $\Q$ or $\phi_{P_{2} + Q_{4}}(P_{4} + Q_{16})$ and $\phi_{P_{2}}(P_{4} + Q_{8})$ are defined over $\Q$. Let us assume the first case (interchanging again between the two bases, $\langle P_{16}, P_{4} + Q_{16} \rangle$ and $\langle P_{16}, Q_{16} \rangle$ of $E[16]$ shows that eliminating the case of $\phi_{P_{2}}(P_{4} + Q_{8})$ and $\phi_{P_{2} + Q_{4}}(Q_{16})$ being defined over $\Q$ also eliminates the case of $\phi_{P_{2}}(P_{4} + Q_{8})$ and $\phi_{P_{2} + Q_{4}}(P_{4} + Q_{16})$ being defined over $\Q$). So for all $\sigma \in G_{\Q}$, there are integers $a=a(\sigma)$ and $b=b(\sigma)$ such that 
	$$\sigma(Q_{16}) = Q_{16}+[a](P_2+Q_4) = [8a]P_{16} + [1+4a]Q_{16}$$
	and $$\sigma(P_4)= P_4+Q_{8}+[b]P_2 - \sigma(Q_8) = [1+2b]P_4-[2a]Q_4.$$
	Thus, the image of $\rho_{E,16}$ is contained in the following group of order $128$:
	$$G_{16}=\left\{\left(\begin{array}{cc} c & 8a \\ d & 1+4a \end{array} \right) : c\equiv 1\bmod 2, d\equiv -2a\equiv 2a \bmod 4  \right\}\subseteq \GL(2,\Z/16\Z).$$
	
	We searched the Rouse--Zureick-Brown database \cite{rouse} of $2$-adic images for groups in $\GL(2,\Z/16\Z)$ that are conjugates of a subgroup of $G_{16}$ above, and found none. Thus, the image cannot be contained in $G_{16}$ and we have reached a contradiction. Hence, only one of $E/\langle P_2\rangle, E/\langle P_{2} + Q_{2}\rangle,$ or $E/\langle P_{2} + Q_{4}\rangle$ may have $\Z/8\Z$ torsion defined over $\Q$. Thus, the possible torsion configurations for elliptic curves in a $T_8$ graph, such that $E(\Q)_\text{tors}\cong \Z/2\Z\times\Z/4\Z$ and $\left\langle Q_{8} \right\rangle$ is a $\Q$-rational subgroup of $E$ are $([2,4],[4],[4],[2,4],[4],[2,2],[2],[2])$, or one of $([2,4],[4],[4],[2,4],[8],[2,2],[2],[2])$ or $([2,4],[8],[4],[2,4],[4],[2,2],[2],[2])$, and examples are shown in our tables.

	\subsection{Elliptic curves with $\Z / 2 \Z \times \Z / 2 \Z$ rational torsion and graph of type $T_{8}$} Let $E / \Q$ be an elliptic curve with $E(\Q)_{\text{tors}} = \langle P_{2}, Q_{2} \rangle \cong \Z / 2 \Z \times \Z / 2 \Z$ and $C(E) = C_{2}(E) = 8$. Thus, we have $8$  finite cyclic  $\Q$-rational groups, namely, $\langle \mathcal{O} \rangle, \langle P_{2} \rangle, \langle P_{2} + Q_{2} \rangle, \langle Q_{2} \rangle, \langle Q_{4} \rangle, \langle P_{2} + Q_{4} \rangle, \langle Q_{8} \rangle,$ and $\langle P_{2} + Q_{8} \rangle$. Suppose moreover that $E$ is not isogenous to any elliptic curves whose torsion subgroups are of order $8$. We claim that the isogeny-torsion graph is of the form $([2,2], [2], [2], [2,2], [2], [2,2], [2], [2])$ or $([2,2], [4], [2], [2,2], [2], [2,2], [2], [2])$.
	
	For each element $R \in E$ that generates a $\Q$-rational subgroup of $E$, let $\phi_{R} \colon E \to E / \left\langle R \right\rangle$ be an isogeny with kernel $\left\langle R \right\rangle$. Note that $E / \langle Q_{2} \rangle(\Q)_{\text{tors}} \cong \Z / 2 \Z \times \Z / 2 \Z$. This is because $\phi_{Q_{2}}(P_{2})$ is a point of order $2$ defined over $\Q$ by the third statement of Lemma \ref{lem-necessity-for-point-rationality}. By the fact that $\langle Q_{4} \rangle$ is $\Q$-rational and Lemma \ref{orbit-of-supergroups-of-rational-groups}, $\phi_{Q_{2}}(Q_{4})$ is another point of order $2$ defined over $\Q$ by the first statement of Lemma \ref{lem-necessity-for-point-rationality}. Thus, $E / \langle Q_{2} \rangle$ has full two-torsion. If $E / \langle Q_{2} \rangle(\Q)_{\text{tors}} \cong \Z / 2 \Z \times \Z / 4 \Z$, then we are reduced to cases of $T_8$ graphs that we have already covered above, so we shall assume here that $E / \langle Q_{2} \rangle(\Q)_{\text{tors}} \cong \Z / 2 \Z \times \Z / 2 \Z$.
	
	By Lemma \ref{Maximality-Of-Rational-2-Power-Groups}, $E / \langle P_{2} + Q_{4} \rangle(\Q)_{\text{tors}}$ is cyclic. The element of order two $\phi_{P_{2} + Q_{4}}(P_{2})$ is contained in the groups of order $4$ given by  $\langle \phi_{P_{2} + Q_{4}}(P_{4}) \rangle$ and $\langle \phi_{P_{2} + Q_{4}}(Q_{8}) \rangle$. If $\phi_{P_{2} + Q_{4}}(P_{4})$ is defined over $\Q$, then $\sigma(P_{4}) - P_{4} \in \langle P_{2} + Q_{4} \rangle$ for all $\sigma \in G_{\Q}$, and by Lemma \ref{orbit-of-supergroups-of-rational-groups}, $\sigma(P_{4}) - P_{4} \in \langle Q_{2} \rangle$. But then $\phi_{Q_{2}}(P_{4})$ would be a point of order $4$ defined over $\Q$ and $\phi_{Q_{2}}(Q_{4})$ is a point of order $2$ defined over $\Q$ not lying in $\langle \phi_{Q_{2}}(P_{4}) \rangle$. Thus, $E / \langle Q_{2} \rangle(\Q)_{\text{tors}} \cong \Z / 2 \Z \times \Z / 4 \Z$, contradicting the assumption we made that $E$ is not isogenous to any elliptic curve with torsion subgroup of order $8$. Now if we assume $\phi_{P_{2} + Q_{4}}(Q_{8})$ is defined over $\Q$, then $\sigma(Q_{8}) - Q_{8} \in \langle P_{2} + Q_{4} \rangle$ for all $\sigma \in G_{\Q}$. As $\langle Q_{8} \rangle$ is $\Q$-rational, this forces $\sigma(Q_{8}) - Q_{8} \in \langle Q_{2} \rangle$ for all $\sigma \in G_{\Q}$. Multiplying through by $2$ shows that $\sigma(Q_{4}) - Q_{4} = \mathcal{O}$ for all $\sigma \in G_{\Q}$, making $Q_{4}$ defined over $\Q$, a contradiction. Thus, $E / \langle P_{2} + Q_{4} \rangle(\Q)_{\text{tors}} \cong \Z / 2 \Z$.
	
	By Lemma \ref{lem-Z2xZ2}, $E / \langle Q_{4} \rangle(\Q)_{\text{tors}} \cong \Z / 2 \Z \times \Z / 2 \Z$. By Lemma \ref{Not-too-many-Z/4}, $E / \langle P_{2} \rangle(\Q)_{\text{tors}}$ and $E / \langle P_{2} + Q_{2} \rangle(\Q)_{\text{tors}}$ are cyclic of order $2$ or $4$ but not both order $4$ and $E / \langle Q_{8} \rangle(\Q)_{\text{tors}}$ and $E / \langle P_{2} + Q_{8} \rangle(\Q)_{\text{tors}}$ are cyclic of order $2$ or $4$ but not both order $4$. We claim that there is at most one isogenous elliptic curve with torsion subgroup $\Z / 4 \Z$.
	
	By \ref{Not-too-many-Z/4}, if we assume that there are two curves with torsion subgroup $\Z / 4 \Z$, then either one and only one of $E / \langle P_{2} \rangle(\Q)_{\text{tors}}$ or $E / \langle P_{2} + Q_{2} \rangle(\Q)_{\text{tors}}$ is isomorphic to $\Z / 4 \Z$ and one and only one of $E / \langle Q_{8} \rangle(\Q)_{\text{tors}}$ or $E / \langle P_{2} + Q_{8}\rangle(\Q)_{\text{tors}}$ is isomorphic to $\Z / 4 \Z$. Interchanging between the bases $\langle P_{2}, Q_{2} \rangle$ and $\langle P_{2} + Q_{2}, Q_{2} \rangle$ of $E[2]$ shows that switching between $E / \langle P_{2} \rangle$ and $E / \langle P_{2} + Q_{2} \rangle$ is inconsequential. Similarly, interchanging between the bases $\langle P_{8}, Q_{8} \rangle$ and $\langle P_{8}, P_{2} + Q_{8} \rangle$ of $E[8]$ shows that switching between $E / \langle Q_{8} \rangle$ and $E / \langle P_{2} + Q_{8} \rangle$ is inconsequential.
	
	Let us assume $E / \langle P_{2} \rangle(\Q)_{\text{tors}} \cong E / \langle Q_{8} \rangle(\Q)_{\text{tors}} \cong \Z / 4 \Z$. Note that the element of order $2$ of $E / \langle P_{2} \rangle$ is $\phi_{P_{2}}(Q_{2})$, which is contained in the groups of order $4$ given by $\langle \phi_{P_{2}}(Q_{4}) \rangle$ and $\langle \phi_{P_{2}}(P_{4} + Q_{4}) \rangle$. If we assume $\phi_{P_{2}}(Q_{4})$ is defined over $\Q$, then $\sigma(Q_{4}) - Q_{4} \in \langle P_{2} \rangle$. As $\langle Q_{4} \rangle$ is $\Q$-rational, $\sigma(Q_{4}) - Q_{4} \in \langle Q_{2} \rangle$, forcing $Q_{4}$ to be defined over $\Q$, a contradiction. Thus, only $\phi_{P_{2}}(P_{4} + Q_{4})$ has the possibility of being defined over $\Q$. The element of order $2$ of $E / \langle Q_{8} \rangle$ is $\phi_{Q_{8}}(P_{2})$, which is contained in the groups of order $4$ given by $\langle \phi_{Q_{8}}(P_{4}) \rangle$ and $\langle \phi_{Q_{8}}(P_{4} + Q_{16}) \rangle$. If we assume that $\phi_{Q_{8}}(P_{4})$ is defined over $\Q$, then $\sigma(P_{4}) - P_{4} \in \langle Q_{8} \rangle$ for all $\sigma \in G_{\Q}$. By Lemma \ref{orbit-of-supergroups-of-rational-groups}, $\sigma(P_{4}) - P_{4} \in \langle Q_{2} \rangle$ for all $\sigma \in G_{\Q}$. By the first statement of Lemma \ref{lem-necessity-for-point-rationality}, $\phi_{Q_{2}}(P_{4})$ is an element of order $4$ defined over $\Q$. By Lemma \ref{orbit-of-supergroups-of-rational-groups} and the first statement of Lemma \ref{lem-necessity-for-point-rationality}, $\phi_{Q_{2}}(Q_{4})$ is an element of order $2$ defined over $\Q$, not in $\langle \phi_{Q_{2}}(P_{4}) \rangle$ which makes $E / \langle Q_{2} \rangle(\Q)_{\text{tors}} \cong \Z / 2 \Z \times \Z / 4 \Z$. As we assume $E$ is not isogenous to any elliptic curve whose torsion subgroup is order $8$, this is a contradiction.
	
	Let us then assume that both $\phi_{P_{2}}(P_{4} + Q_{4})$ and $\phi_{Q_{8}}(P_{4} + Q_{16})$ are defined over $\Q$. So for all $\sigma \in G_{\Q}$, there are integers $a=a(\sigma)$ and $b=b(\sigma)$ such that 
	$$\sigma(P_4+Q_{4}) = P_4+Q_{4}+[a](P_2) = [1+2a]P_{4} + Q_{4},$$
	and $$\sigma(P_4+Q_{16})= P_4+Q_{16}+[b]Q_8 = P_4+[1+2b]Q_{16}.$$
	It follows that $\sigma(Q_{16}) = [-40a]P_{16}+[1+10b]Q_{16}$ and $\sigma(P_4)=[1+2a]P_4-[10b]Q_4$. Thus, the image of $\rho_{E,16}$ is contained in the following group of order $256$:
	$$G_{16}=\left\{\left(\begin{array}{cc} c & -40a \\ d & 1+10b \end{array} \right) : c\equiv 1+2a\bmod 4, d\equiv -10b\bmod 4  \right\}\subseteq \GL(2,\Z/16\Z).$$
	
	We searched the Rouse--Zureick-Brown database \cite{rouse} of $2$-adic images for groups in $\GL(2,\Z/16\Z)$ that are conjugates of a subgroup of $G_{16}$ above, and found none. Thus, the image cannot be contained in $G_{16}$ and we have reached a contradiction. Hence, only one of $E / \langle P_{2} \rangle(\Q)_{\text{tors}}$ or $E / \langle Q_{8} \rangle(\Q)_{\text{tors}}$ (or $E / \langle P_{2}+Q_{2} \rangle(\Q)_{\text{tors}}$ or $E / \langle P_{2}+Q_{8} \rangle(\Q)_{\text{tors}}$) may be isomorphic to $\Z / 4 \Z$. Therefore, the possible isogeny-torsion graphs are either one of the following forms: $([2,2], [2], [2], [2,2], [2], [2,2], [2], [2])$ or $([2,2], [4], [2], [2,2], [2], [2,2], [2], [2])$, as claimed.
	
	\section{Elliptic curves with a graph of type $S$}\label{sec-Sgraphs}
	
	In this section we consider elliptic curves with $C_2(E)= 4$ and $C_3(E)= 2$, so that $C(E)=8$. In this case, the $2$-torsion is defined over the rationals, $E(\Q)[2]=\langle P_2,Q_2\rangle$, and there is an additional $\Q$-rational group $\langle A_3\rangle$ of order $3$. Then, the  finite cyclic $\Q$-rational groups of $E$ are $\langle \mathcal{O} \rangle, \langle P_{2} \rangle, \langle Q_{2} \rangle, \langle P_{2} + Q_{2} \rangle, \langle A_{3} \rangle, \langle P_{2} + A_{3} \rangle, \langle Q_{2} + A_{3} \rangle,$ and $\langle P_{2} + Q_{2} + A_{3} \rangle$. The isogeny graph is therefore:
	
	\begin{center} 
	\begin{tikzcd}
		& E_{3}=E_1/\langle P \rangle  \arrow["3",r, no head]                                 & E_{4}=E_1/\langle P+A \rangle                                 &       \\
		& E_{1} \arrow["2",u, no head] \arrow["3",r, no head] \arrow["2",rd, no head] \arrow["2",ld, no head] & E_{2}=E_1/\langle A \rangle \arrow["2",u, no head] \arrow["2",rd, no head] \arrow["2",ld, no head] &       \\
		E_{5}=E_1/\langle P+Q \rangle \arrow["3",r, no head] & E_{6}=E_1/\langle P+Q+A \rangle                                & E_{7}=E_1/\langle Q \rangle \arrow["3",r, no head]                       & E_{8}=E_1/\langle Q+A \rangle
	\end{tikzcd}
\end{center} 
	
	We will distinguish two cases, according to whether there is a rational point of order $3$. We remind the reader here that since $E$ has its full two-torsion defined over $\Q$, every $\Q$-isogenous curve to $E$ has at least one $2$-torsion point defined over $\Q$, by Lemma \ref{lem-2torspt-all-have-2torspt}.
	
	\subsection{Elliptic curves with $\Z / 2 \Z \times \Z / 6 \Z$ rational torsion, and graph of type $S$}\label{sec-S1} Let $E = E_{1} / \Q$ be an elliptic curve with $E(\Q)_{\text{tors}} = \langle P_{2}, Q_{2}, A_{3} \rangle \cong \Z / 2 \Z \times \Z / 6 \Z$ where $E(\Q)[2] = \langle P_{2}, Q_{2} \rangle$ and $A_{3} \in E(\Q)[3]$.  
	
	Let $\phi_{A_{3}} \colon E \to E / \langle A_{3} \rangle$ be an isogeny with kernel $\langle A_{3} \rangle$. By Lemma \ref{lem-subsequent-rational-pts}, $E / \langle A_{3} \rangle$ has no points of order $3$ defined over $\Q$ and by the third statement of Lemma \ref{lem-necessity-for-point-rationality}, the points $\phi_{A_{3}}(P_{2})$ and $\phi_{A_{3}}(Q_{2})$ are distinct of order $2$ defined over $\Q$, hence $E / \langle A_{3} \rangle(\Q)_{\text{tors}} \cong \Z / 2 \Z \times \Z / 2 \Z$.
	
	Let $\phi_{P_{2}} \colon E \to E / \langle P_{2} \rangle$ be an isogeny with kernel $\langle P_{2} \rangle$. By Lemma \ref{Maximality-Of-Rational-2-Power-Groups}, $E / \langle P_{2} \rangle(\Q)_{\text{tors}}$ is cyclic. By the third statement of Lemma \ref{lem-necessity-for-point-rationality}, we have that $\phi_{P_{2}}(Q_{2})$ and $\phi_{P_{2}}(A_{3})$ are points of order $2$ and $3$ respectively defined over $\Q$. Thus, $E / \langle P_{2} \rangle(\Q)_{\text{tors}} \cong \Z / 6 \Z$ or $\Z / 12 \Z$. 
	
	Let $\phi_{P_{2} + A_{3}} \colon E \to E / \langle P_{2} + A_{3} \rangle$ be an isogeny with kernel $\left\langle P_{2} + A_{3} \right\rangle$. Then,  $\phi_{P_{2}}(P_{4})$ is a point of order $2$ not defined over $\Q$, so by Lemma \ref{lem-rectangle-lemma}, the point $\phi_{P_{2} + A_{3}}(P_{4})$ is of order $2$ not defined over $\Q$. Thus, $E / \langle P_{2} + A_{3} \rangle (\Q)_{\text{tors}}$ is cyclic. $E / \langle A_{3} \rangle$ has no points of order $3$ defined over $\Q$, so by Lemma \ref{lem-rectangle-lemma}, $E / \langle P_{2} + A_{3} \rangle$ has no points of order $3$ defined over $\Q$. Thus, $E / \langle P_{2} + A_{3} \rangle(\Q)_{\text{tors}} \cong \Z / 2 \Z$ or $\Z / 4 \Z$. A final application of Lemma \ref{lem-rectangle-lemma} shows that $E / \langle P_{2} \rangle(\Q)_{\text{tors}} \cong \Z / 12 \Z$ if and only if $E / \langle P_{2} + A_{3} \rangle(\Q)_{\text{tors}} \cong \Z / 4 \Z$ and $E / \langle P_{2} \rangle(\Q)_{\text{tors}} \cong \Z / 6 \Z$ if and only if $E / \langle P_{2} + A_{3} \rangle(\Q)_{\text{tors}} \cong \Z / 2 \Z$.
	
	We can use the work we did above to arrive at identical results when rewriting $Q_{2}$ or $P_{2} + Q_{2}$ in place of $P_{2}$. More specifically, $E / \langle Q_{2} \rangle(\Q)_{\text{tors}} \cong \Z / 12 \Z$ if and only if $E / \langle Q_{2} + A_{3} \rangle \cong \Z / 4 \Z$ and $E / \langle Q_{2} \rangle(\Q)_{\text{tors}} \cong \Z / 6 \Z$ if and only if $E / \langle Q_{2} + A_{3} \rangle \cong \Z / 2 \Z$. And finally, $E / \langle P_{2} + Q_{2} \rangle(\Q)_{\text{tors}} \cong \Z / 12 \Z$ if and only if $E / \langle P_{2} + Q_{2} + A_{3} \rangle \cong \Z / 4 \Z$ and $E / \langle P_{2} + Q_{2} \rangle(\Q)_{\text{tors}} \cong \Z / 6 \Z$ if and only if $E / \langle P_{2} + Q_{2} + A_{3} \rangle \cong \Z / 2 \Z$.
	
	Hence, we have shown that if $E(\Q)_\text{tors}\cong \Z/2\Z\times \Z/6\Z$, then $E/\langle A_3 \rangle (\Q))_\text{tors} \cong \Z/2\Z \times \Z/2\Z$, and the rest of the vertices come in pairs, and the torsion groups in each pair are either $([12],[4])$ or $([6],[2])$. Suppose that the torsion subgroups in all three pairs were of the form $([12],[4])$. Then, if we consider $E'=E/\langle A_3 \rangle$, it follows that the $2$-power isogenies of $E'$ form a $T_4$ graph with $\Z/4\Z$ at each corner. However, we showed in Section \ref{sec-T4} that this is impossible. Thus, at most two pairs of elliptic curves at the corners of the isogeny-torsion graph of $E$ may have torsion subgroups $([12],[4])$.
	
	We have found examples of isogeny-torsion graphs of the form $([2,6],[2,2],[6],[2],[6],[2],[6],[2])$ and $([2,6],[2,2],[12],[4],[6],[2],[6],[2])$, that is, with zero or one $\Z/12\Z$ torsion subgroups. In Section \ref{sec-elusive} we will show that $([2,6],[2,2],[12],[4],[12],[4],[6],[2])$, with two copies of $\Z/12\Z$ in the graph, is impossible over $\Q$.

	\subsection{Elliptic curves with $\Z / 2 \Z \times \Z / 2 \Z$ rational torsion, and graph of type $S$}\label{sec-S2}
	
	Let $E = E_{1} / \Q$ with $E(\Q)_{\text{tors}} = \langle P_{2}, Q_{2} \rangle \cong \Z / 2 \Z \times \Z / 2 \Z$. Let $A_{3}$ be an element of $E$ of order $3$ that generates a $\Q$-rational subgroup of $E$ but is not defined over $\Q$. Then, again, the  finite cyclic $\Q$-rational groups of $E$ are $\langle \mathcal{O} \rangle, \langle P_{2} \rangle, \langle Q_{2} \rangle, \langle P_{2} + Q_{2} \rangle, \langle A_{3} \rangle, \langle P_{2} + A_{3} \rangle, \langle Q_{2} + A_{3} \rangle,$ and $\langle P_{2} + Q_{2} + A_{3} \rangle$. As $E$ does not have a point of order $3$ defined over $\Q$, neither do $E / \langle P_{2} \rangle, E / \langle Q_{2} \rangle,$ or $E / \langle P_{2} + Q_{2} \rangle$ by Lemma \ref{lem-necessity-for-point-rationality}. If any one of $E / \langle A_{3} \rangle, E / \langle P_{2} + A_{3} \rangle, E / \langle Q_{2} + A_{3} \rangle,$ or $E / \langle P_{2} + Q_{2} + A_{3} \rangle$ has a point of order $3$ defined over $\Q$, then $E / \langle A_{3} \rangle (\Q)_{\text{tors}} \cong \Z / 2 \Z \times \Z / 6 \Z$ by Lemma \ref{lem-necessity-for-point-rationality} and we are back to the case already considered in Section \ref{sec-S1}. Thus, we may assume that none of the isogenous curves have an element of order $3$ defined over $\Q$. 
	
	All of the work we did for graphs for elliptic curves with $\Z / 2 \Z \times \Z / 6 \Z$ rational torsion apply for this case, except no torsion subgroup has a point of order $3$. In other words, the torsion subgroups are of order $2$ or $4$. Thus, $E / \langle A_{3} \rangle (\Q)_{\text{tors}} \cong \Z / 2 \Z \times \Z / 2 \Z$, $E / \langle P_{2} \rangle(\Q)_{\text{tors}} \cong E / \langle P_{2} + A_{3} \rangle(\Q)_{\text{tors}} \cong \Z / 4 \Z$ or $\Z / 2 \Z$, and $E / \langle Q_{2} \rangle(\Q)_{\text{tors}} \cong E / \langle Q_{2} + A_{3} \rangle(\Q)_{\text{tors}} \cong \Z / 4 \Z$ or $\Z / 2 \Z$, and $E / \langle P_{2} + Q_{2} \rangle(\Q)_{\text{tors}} \cong E / \langle P_{2} + Q_{2} + A_{3} \rangle(\Q)_{\text{tors}} \cong \Z / 4 \Z$ or $\Z / 2 \Z$. Thus, as in the previous case in Section \ref{sec-S1}, there are three pairs that have torsion $([4],[4])$ or $([2],[2])$. Using the same reasoning as before (using our work from the $T_4$ graphs in Section \ref{sec-T4}), it is impossible for all three pairs to be of the form $([4],[4])$. 
	
	We have found examples of isogeny-torsion graphs of the form $([2,2],[2,2],[2],[2],[2],[2],[2],[2])$ and $([2,2],[2,2],[4],[4],[2],[2],[2],[2])$, that is, with either none or one pair having torsion $([4],[4])$. In Section \ref{sec-elusive} we will show that $([2,2],[2,2],[4],[4],[4],[4],[2],[2])$, that is, with two pairs having torsion $([4],[4])$, is impossible over $\Q$.
	
	\subsection{Two $S$-type graphs that do not exist over $\Q$}\label{sec-elusive}
	
	In order to complete the classification of $S$-type isogeny-torsion graphs, it remains to show that $([2,2],[2,2],[4],[4],[4],[4],[2],[2])$ and  $([2,6],[2,2],[12],[4],[12],[4],[6],[2])$ do not occur over $\Q$.
	
	Let $E/\Q$ be an elliptic curve with full two-torsion and $C(E)=8$, such that $C_2(E)=4$ and $C_3(E)=2$. We note here that these conditions on isogenies imply that $E/\Q$ is necessarily a non-CM elliptic curve, by our work in Section \ref{sec-CM}. In addition, let us assume that the isogeny-torsion graph of $E$ is of the type $([2,2],[2,2],[4],[4],[4],[4],[2],[2])$ or  $([2,6],[2,2],[12],[4],[12],[4],[6],[2])$. In either case, $E(\Q)[2]=\langle P_2,Q_2\rangle$, there are no other $2$-power isogenies other than those coming from the $2$-torsion, and there is a $\Q$-rational subgroup $\langle A_3\rangle$ of order $3$ (and in the second case of isogeny-torsion graph, we have $A_3$ defined over $\Q$). In other words, $E/\Q$ satisfies two properties: (i) the $2$-isogeny-torsion graph is of type $T_4$, with two $\Z/4\Z$ torsion subgroups in corner vertices, and (ii) there is a $3$-isogeny. For $R \in E$ be an element that generates a $\Q$-rational subgroup fo $E$, let $\phi_{R} \colon E \to E / \left\langle R \right\rangle$ be an isogeny with kernel $\left\langle R \right\rangle$.
	
First, we shall characterize elliptic curves that satisfy (i), i.e., $E/\Q$ with $2$-isogeny-torsion graph of type $T_4$ with two $\Z/4\Z$ groups in corner vertices. After a relabeling of a basis of $E[2]$, if necessary, we may assume that the curves with $\Z/4\Z$ torsion are $E / \langle P_{2} \rangle$, and  $E / \langle Q_{2} \rangle$. Note that the rational element of order $2$ of $E / \langle P_{2} \rangle$ is $\phi_{P_{2}}(Q_{2})$, which is contained in the groups of order $4$ given by $\langle \phi_{P_{2}}(Q_{4}) \rangle$ and $\langle \phi_{P_{2}}(P_{4} + Q_{4}) \rangle$. Similarly, the rational element of order $2$ of $E / \langle Q_{2} \rangle$ is $\phi_{Q_{2}}(P_{2})$, which is contained in the groups of order $4$ given by $\langle \phi_{Q_{2}}(P_{4}) \rangle$ and $\langle \phi_{Q_{2}}(P_{4} + Q_{4}) \rangle$.

If we assume that both $\phi_{P_{2}}(P_{4} + Q_{4})$ and $\phi_{Q_{2}}(P_{4} + Q_{4})$ are defined over $\Q$, then by Lemma \ref{lem-necessity-for-point-rationality}, we have $\sigma(P_{4} + Q_{4}) - (P_{4} + Q_{4}) \in \langle P_{2} \rangle$ and $\sigma(P_{4} + Q_{4}) - (P_{4} + Q_{4}) \in \langle Q_{2} \rangle$ for all $\sigma \in G_{\Q}$. Thus, $\sigma(P_{4} + Q_{4}) - (P_{4} + Q_{4}) = \mathcal{O}$ for all $\sigma \in G_{\Q}$. Therefore, $P_{4} + Q_{4}$ is defined over $\Q$ and thus, $\langle P_{4} + Q_{4} \rangle$ is a $\Q$-rational group of order $4$, but $E/\Q$ does not have $4$-isogenies (only $2$-isogenies), otherwise $C(E)>8$. If we assume that both $\phi_{Q_{2}}(P_{4})$ and $\phi_{P_{2}}(Q_{4})$ are defined over $\Q$, then by the first statement of Lemma \ref{lem-necessity-for-point-rationality}, $\sigma(P_{4}) - P_{4} \in \langle Q_{2} \rangle$ and $\sigma(Q_{4}) - Q_{4} \in \langle P_{2} \rangle$ for all $\sigma \in G_{\Q}$. But then the image of the mod $4$ Galois representation attached to $E$ is a subgroup of 
$$ \left\{\left(\begin{array}{cc} 1 & 2a \\ 2b & 1 \end{array} \right) : a,b\in \Z / 4 \Z \right\}.$$
However, the determinant map would not surject onto $(\Z / 4 \Z)^{\times}$ so we have a contradiction.

	Thus, we may assume $\phi_{Q_{2}}(P_{4})$ and $\phi_{P_{2}}(P_{4} + Q_{4})$ are defined over $\Q$ (the other possibility is obtained by exchanging the roles of $P_2$ and $Q_2$). Then, by Lemma \ref{lem-necessity-for-point-rationality}, we have $\sigma(P_{4}) - P_{4} \in \langle Q_{2} \rangle$ and $\sigma(P_{4} + Q_{4}) - (P_{4} + Q_{4}) \in \langle P_{2} \rangle$ for all $\sigma \in G_{\Q}$. Thus, $\sigma(P_{4}) = P_{4}+[2a]Q_4$ for some $a=a(\sigma)$. Since $\sigma(P_{4} + Q_{4}) - (P_{4} + Q_{4}) \in \langle P_{2} \rangle$, we have $\sigma(P_4+Q_4)=[1+2b]P_4+Q_4$ for some $b=b(\sigma)$. Thus, $\sigma(Q_{4})=[2b]P_4+[1-2a]Q_4$  and the image of the mod $4$ Galois representation is a subgroup of the Klein $4$-group:
	$$H = \left\{\left(\begin{array}{cc} 1 & 0 \\ 0 & 1 \end{array} \right), \left(\begin{array}{cc} 1 & 2 \\ 0 & 1 \end{array} \right), \left(\begin{array}{cc} 1 & 0 \\ 2 & 3 \end{array} \right), \left(\begin{array}{cc} 1 & 2 \\ 2 & 3 \end{array} \right) \right\} .$$
	Let $\widehat{H}\subseteq \GL(2,\Z_2)$ be the subgroup such that $\widehat{H}\equiv H \bmod 4$. Using the Rouse--Zureick-Brown database of $2$-adic images, we find that the (non-CM) elliptic curves with $2$-adic image contained in $\widehat{H}$ are parametrized by the points on $X_{24e}$ in the notation of \cite{rouse}. Further, the modular curve $X_{24e}$ is of genus $0$, isomorphic to $\PP^1$ (with coordinate $t$), and a parametrization of the elliptic curves in the family is given by
	\[E_{24e}: y^2 = x^3 -(27t^{4} + 27t^{2} + 27)x + (54t^{6} + 81t^{4} - 81t^{2} - 54),\]
	with $j$-invariant equal to $j_{E_{24e}}(t) = \frac{(t^4+t^2+1)^3}{t^4(t^2+1)^2}.$
	
	On the other hand, the $j$-invariants of curves satisfying (ii) above, i.e., the elliptic curves $E/\Q$ with a $3$-isogeny, are parametrized by the non-cuspidal points of $X_0(3)$, a curve of genus $0$. The $j$-invariant of such a family can be found for example in \cite{lozano0}, and it is given by $j(s)=(s+27)(s+243)^3/s^3$. Since our assumptions imply that $E/\Q$ is associated to a point in both $X_{24e}$ and $X_0(3)$, it follows that there are rational numbers $t$ and $s$ such that
	$$\frac{(t^4+t^2+1)^3}{t^4(t^2+1)^2} = \frac{(s+27)(s+243)^3}{s^3}.$$
	Let $C$ be the curve (of genus $13$) defined by this equation, that is,
	$$C: (t^4+t^2+1)^3s^3-t^4(t^2+1)^2(s+27)(s+243)^3=0.$$
	If we let $C': (t^2+t+1)^3s^3-t^2(t+1)^2(s+27)(s+243)^3=0$ instead, then there is clearly a map $\phi\colon C\to C'$ that sends $(s,t)\mapsto (s,t^2)$. The curve $C'$ is of genus $6$, and a computation with Magma (code available at \cite{lozanoweb}) shows that $C'$ has an automorphism group of order $6$. One of the automorphisms of $C'$ is the map that sends $(t,s)\mapsto (-1-1/t,s)$, which corresponds to the  automorphism of $C'$ in projective coordinates given by
	$$\psi\colon (t,s,z) \mapsto (-tz-z^2,ts,tz).$$
	The quotient $C'' = C'/\langle \psi\rangle$ is a genus $2$  hyperelliptic curve with equation
	$$C'':  y^2 + x^2y = -x^5 - x^4 + 4x^3 - 2x^2 - 9x  + 2.$$
	Again using Magma, a descent shows that the jacobian $J(C'')/\Q$ is of rank $0$, and therefore one can use the Chabauty method to compute the rational points on $C''$. These are $[-2,-2,1]$ and $[1,0,0]$ in projective coordinates. Pulling these points back to $C'$ via the map $C'\to C'/\langle \psi\rangle = C''$, we obtain the rational points on $C'$, which are
	$$[t,s,z]\in \{ [-1,0,1], [0,0,1], [ 0,1,0], [1,0,0]\}. $$
	Note that all of these points have either $t=0$ or $s=0$ and are therefore cusps in $C$, so they do not correspond to any elliptic curve with the properties we sought. 
	
	Hence, there are no elliptic curves $E/\Q$ that satisfy (i) and (ii), and we conclude that there are no elliptic curves over $\Q$
	whose isogeny-torsion graph is one of  $([2,2],[2,2],[4],[4],[4],[4],[2],[2])$ or  $([2,6],[2,2],[12],[4],[12],[4],[6],[2])$. This completes the proof of Theorem \ref{thm-main2}.

\bibliography{alvaro-garen}{}
\bibliographystyle{plain}

\end{document}